\let\csname[\endcsname\relax
\let\csname]\endcsname\relax
\DeclareRobustCommand\csname[\expandafter\endcsname\expandafter{%
  \csname begin\endcsname{equation}%
}%
\DeclareRobustCommand\csname]\expandafter\endcsname\expandafter{%
  \csname end\endcsname{equation}%
  }%
\newtheorem{Theorem}{Theorem}[section]
\newtheorem{Definition}[Theorem]{Definition}
\newtheorem{Proposition}[Theorem]{Proposition}
\newtheorem{Lemma}[Theorem]{Lemma}
\newtheorem{Corollary}[Theorem]{Corollary}
\newtheorem{TheoremIntro}{Theorem}
\theoremstyle{remark}
\newtheorem{Example}[Theorem]{Example}
\newtheorem{Remark}[Theorem]{Remark}
\newlist{thmlist}{enumerate}{1}
\setlist[thmlist]{
        nolistsep,
        ref={\mdseries\textup{(\emph{\roman*})}},
        label={\mdseries\textup{(\emph{\roman*})}},
        }
\newlist{tfae}{enumerate}{1}
\setlist[tfae]{
        nolistsep,
        ref={\mdseries\textup{(\emph{\alph*})}},
        label={\mdseries\textup{(\emph{\alph*})}},
        before={\advance\mathindent\leftmargin}}
\newcommand\tfaeitem[1]{{\textup{(\emph{\@alph #1})}}}
\newcommand\claim[1]{%
  \begin{minipage}{.8\displaywidth}%
  \itshape
  #1
  \end{minipage}%
}
\DeclareMathOperator{\im}{Im}
\DeclareMathOperator{\Ext}{Ext}
\DeclareMathOperator{\Hom}{Hom}
\DeclareMathOperator{\Der}{Der}
\DeclareMathOperator{\Aut}{Aut}
\DeclareMathOperator{\End}{End}
\DeclareMathOperator{\Diff}{Diff}
\DeclareMathOperator{\coker}{coker}
\DeclareMathOperator{\Gr}{Gr}
\DeclareMathOperator{\GL}{GL}
\DeclarePairedDelimiter\abs{\lvert}{\rvert}
\DeclarePairedDelimiter\lin{\langle}{\rangle}
\newcommand\nset[1]{\llbracket#1\rrbracket}
\newcommand\NN{\mathbb{N}}
\newcommand\CC{\mathbb{C}}
\renewcommand\epsilon{\varepsilon}
\renewcommand\SS{\mathfrak{S}}
\newcommand\HH{H\!H}
\newcommand\kk{\Bbbk}
\newcommand\X{\mathfrak{X}}
\newcommand\D{\Diff}
\newcommand\A{\mathcal{A}}
\renewcommand\AA{\mathbb A}
\newcommand\B{\mathcal{B}}
\newcommand\inc{\hookrightarrow}
\newcommand\xx{\hat{x}}
\DeclareMathOperator{\Out}{OutDer}
\title[The Hochschild cohomology~$H(S,U)$]{
Center and Lie algebra of outer derivations for
algebras of differential
operators associated to hyperplane arrangements}
\author{Francisco Kordon}
\author{Thierry Lambre}
\address{
CONICET
and
Instituto Balseiro, Universidad Nacional de Cuyo – CNEA.
Av.\,Bustillo 9500, San Carlos de Bariloche, R8402AGP,
R\'io Negro,
Argentina}
\email{franciscokordon@gmail.com}
\address{
Laboratoire de Mathématiques Blaise Pascal,
UMR6620 CNRS,
Université Clermont Auvergne,
Campus des Cézeaux,
3 place Vasarely,
63178 Aubière cedex,
France}
\email{thierry.lambre@uca.fr}
\date{\today}
\begin{document}

\begin{abstract}
We compute the center and the Lie algebra of outer derivations of a familiy
of algebras of differential operators associated to hyperplane arrangements
of the affine space $\mathbb A^3$. The results are completed for $4$-braid
arrangements and for reflection arrangements associated to the wreath product
of a cyclic group with the symmetric group $\SS_3$. To
achieve this we use tools from homological algebra and Lie--Rinehart algebras
of differential operators.
\end{abstract}

\maketitle


\section*{Introduction}

Let $V$ be a finite-dimensional $\kk$-vector space over a field $\kk$ of
characteristic zero, $S$ the algebra of coordinates on $V$ and $\A$ a central
hyperplane arrangement in $V$.
We assume throughout the article that $\A$ is a free arrangement
in the sense given by
K.\,Saito in~\cite{saito}: we require that the
Lie algebra~$\Der\A$ of derivations of $S$ tangent to $\A$ is a free
$S$-module.
%
The algebra $\Diff\A$ of differential operators tangent
to~$\A$,
as seen by F.\,J.\,Calderon Moreno in~\cite{calderon} and by 
M.\,Suárez--Álvarez
in~\cite{differential-arrangements},
is the subalgebra of~$\End(S)$ generated by $\Der\A$ and $S$.
Our results concern the center and the Lie algebra of
outer derivations of $\Diff\A$.

The first and simplest example of a free arrangement is the case
of a central line arrangement in $V=\kk^2$. This case is studied by the first
author and M.\,Suárez-Álvarez in ~\cite{ksa} when there are at least $5$
lines: a description of the Hochschild cohomology~$\HH^\*\left( \Diff\A
\right)$, including its cup product and Gerstenhaber bracket, is given
explicitly in detail through a calculation independent of the methods that we
now use.
The second and most important family of
examples is that of the braid arrangements $\B_n$, given for $n\geq2$ by the
hyperplanes $H_{ij}=\{x\in\kk^n :x_i=x_j\}$ with $i\neq j$: these arrangements
are free and have served historically as a proxy to obtain general
results, for instance in V.\,I.\,Arnold's classical article~\cite{arnold}.

In virtue of the freeness of $\A$ the algebra $\Diff\A$ is isomorphic to the
enveloping algebra of a Lie--Rinehart algebra~$(S,L)$ ---see L.\,Narvaez
Macarro's~\cite{narvaez} and the first author's thesis~\cite{tesis}--- and
then the spectral sequence introduced by both authors in~\cite{kola} permits
the computation of~$\HH^\bullet(\Diff\A)$ in terms of the Hochschild
cohomology $H^\*(S,\Diff\A)$ of~$S$ with values on $\Diff\A$ and the
Lie--Rinehart cohomology of~$L$.  This was successfully applied to
arrangements of three lines in~\cite{kola}, and, ultimately, to $\A = \B_3$
---see Corollary~\ref{coro:hhB3}.

The homological approach described above allows us to compute the center of
$\Diff\A$ under the hypothesis that the Saito's matrix of the arrangement~$\A$
is triangular: more generally, we can state this result resorting to the
hypothesis of triangularizability of Lie--Rinehart algebras that we give in
Definition~\ref{def:conditions}.

\begin{TheoremIntro}[Theorem~\ref{thm:hh0}]
Let $(S,L)$ be a triangularizable Lie--Rinehart algebra with enveloping
algebra $U$. The center of $U$ is $\kk$.
\end{TheoremIntro}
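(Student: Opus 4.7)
The plan is to filter $U$ by the Poincaré--Birkhoff--Witt filtration and use the triangularizability to turn centrality into a linear system over $S$ that can be solved by descending induction on the index of the generators. Let $D_1, \ldots, D_n$ be a basis of $L$ over $S$ and $x_1, \ldots, x_n$ coordinates on $S$ witnessing triangularizability, so that the Saito matrix $M = \bigl(D_i(x_j)\bigr)_{i,j}$ is lower triangular with nonzero diagonal entries. By PBW for Lie--Rinehart algebras, every $u \in U$ admits a unique expression $u = \sum_\alpha s_\alpha D^\alpha$ with $s_\alpha \in S$ and $D^\alpha = D_1^{\alpha_1} \cdots D_n^{\alpha_n}$; write $\deg u$ for the maximal $|\alpha|$ with $s_\alpha \neq 0$.

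Assume $z \in Z(U)$ satisfies $\deg z = d \geq 1$, and pass to the associated graded $\gr U \cong \mathrm{Sym}_S(L)$. For every coordinate $x_j$ the commutator $[x_j, z]$ vanishes; reading off its top part in $\gr^{d-1} U$ and using that the induced Poisson structure satisfies $\{x_j, \xi^\alpha\} = \sum_i \alpha_i D_i(x_j)\,\xi^{\alpha - e_i}$, one obtains the system
\[
\sum_{i=1}^n (\beta_i + 1)\, D_i(x_j)\, s_{\beta + e_i} = 0, \qquad |\beta| = d-1,\ 1 \leq j \leq n.
\]
The heart of the argument is to peel off this system from $j = n$ down to $j = 1$: once $s_{\beta + e_i} = 0$ is known for $i > j$, triangularity collapses the $j$-th equation to $(\beta_j + 1)\, D_j(x_j)\, s_{\beta + e_j} = 0$, and since $D_j(x_j)$ is a nonzero element of the domain $S$, it follows that $s_{\beta + e_j} = 0$ for every $\beta$. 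Running over all $j$ yields $s_\alpha = 0$ for every $\alpha$ of weight $d$, contradicting $\deg z = d$. Hence $z \in S$.

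It remains to show $S \cap Z(U) = \kk$. Any such $z$ satisfies $D_i(z) = [D_i, z] = 0$, i.e.\ $z \in S^L$, and a parallel descending argument applied to the leading monomial of $z$ in a lexicographic order compatible with the triangularization forces $z \in \kk$. I expect the main obstacle to be exactly this last step: it requires ruling out polynomial invariants of the $D_i$ in $S$ rather than just cancelling coefficients of a $\mathrm{Sym}_S(L)$-valued expression, and the bookkeeping is a touch more delicate because the diagonal entries $D_j(x_j)$ need not be units of $S$. If the paper's notion of triangularizability, as packaged in Definition~\ref{def:conditions}, already builds in the statement $S^L = \kk$, the entire proof reduces to the descending induction above; otherwise an extra lemma is needed, again exploiting the triangularity of $M$ but now with respect to the filtration of $S$ by monomial degree.
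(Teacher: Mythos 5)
Your overall strategy is sound and, at bottom, it is the paper's computation performed in different clothing. The paper does not argue directly with the centralizer: it identifies $Z(U)=\HH^0(U)$, uses the degeneration of the spectral sequence of Theorem~\ref{thm:spectral} to get $\HH^0(U)\cong H^0_S(L,H^0(S,U))$, and computes $H^0(S,U)=S$ by exactly the linear system you write down --- Lemma~\ref{lem:axkalphai} is your formula for the top symbol of $[u,x_k]$, and Lemma~\ref{lem:h0key} together with Proposition~\ref{prop:h0key} is your ``peel off the triangular system, using that the diagonal entries are nonzero and $S$ is a domain'' step, phrased in the Koszul complex rather than in $\gr U\cong\mathrm{Sym}_S(L)$. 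Your route buys a self-contained, elementary argument (no Hochschild cohomology, no spectral sequence), at the price of not producing the intermediate result $H^0(S,U)=S$, which the paper reuses later. One small point of hygiene: with the paper's Definition~\ref{def:conditions} the Saito matrix $(\alpha_i(x_j))$ vanishes for $i>j$, so it is the \emph{upper} triangular convention; your peeling ``from $j=n$ down to $j=1$, knowing $s_{\beta+e_i}=0$ for $i>j$'' is the transposed convention and, as written, does not collapse the equations --- with the paper's convention you must start at $j=1$, where only the term $(\beta_1+1)\alpha_1(x_1)s_{\beta+e_1}$ survives, and ascend. The argument is the same after fixing the direction, but as stated it is inconsistent with the definition you invoke.

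The genuine gap is the final step. Having shown $Z(U)\subseteq S$ and hence $Z(U)\subseteq S^L=\{f\in S:\alpha_i(f)=0\ \text{for all }i\}$, you leave $S^L=\kk$ unproved, hoping it is packaged into triangularizability (it is not: Definition~\ref{def:conditions} only demands the triangular shape and $\alpha_1(x_1)\cdots\alpha_n(x_n)\neq0$) or that some ``lexicographic leading monomial'' argument will do it; that sketch is not an argument, and it is not how one should proceed. The missing lemma is, however, two lines and is precisely how the paper concludes the proof of Theorem~\ref{thm:hh0}: since $\alpha_i(f)=\sum_{j}\alpha_i(x_j)\partial_jf$, the condition $\alpha_i(f)=0$ for all $i$ says that the gradient $(\partial_1f,\ldots,\partial_nf)$ lies in the kernel of the Saito matrix, which is triangular with nonzero diagonal entries in the domain $S$; back-substitution (no units needed, only that $S$ has no zero divisors) gives $\partial_jf=0$ for all $j$, and in characteristic zero this forces $f\in\kk$. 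With that lemma inserted, and the triangularity direction corrected, your proof is complete and is an acceptable, more elementary alternative to the paper's homological argument.
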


Let $\A_r$, $r\geq1$, be the arrangement in $\CC^3$ defined by 
$0 = xyz(z^r-y^r)(z^r-x^r)(y^r-x^r)$.
This arrangement is~$\B_4$ when $r=1$. When $r\geq2$, it is the
reflection arrangement of 
the wreath product of the cyclic group of order $r$ and the symmetric
group~$\SS_3$.
The homological method yields the following result.


\begin{TheoremIntro}[Corollary~\ref{coro:abelian}]
Let $r\geq1$.
For each hyperplane $H$ in $\A_r$ let $f_H$ be a linear form with kernel $H$
and
$\partial_H$ the derivation of $\Diff\A_r$ determined by 
\[
\begin{cases*}
  \partial_H(g) = 0 & if $g\in \kk[x_1,x_2,x_3]$; \\
  \partial_H(\theta) = \theta(f_H)/f_H & if $\theta\in\Der\A_r$.
\end{cases*}
\]
The Lie algebra of outer derivations of $\Diff\A_r$ together with the
commutator is an abelian Lie algebra of dimension~$3r+3$,
the numbers of
hyperplanes of $\A_r$, and is generated by the
classes of the derivations $\partial_H$ with $H\in\A_r$.
\end{TheoremIntro}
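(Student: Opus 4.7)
The plan is to deduce this from the explicit calculation of $\HH^1(\Diff\A_r)$ produced earlier in the paper via the spectral sequence of~\cite{kola}, and then to verify the Lie algebra structure directly on the generators $\partial_H$.

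First I would check that each $\partial_H$ extends to a derivation of $\Diff\A_r$. Since this algebra is the enveloping algebra of the Lie--Rinehart algebra $(S, \Der\A_r)$, it suffices to verify compatibility with the relations $[\theta, s] = \theta(s)$ and the Jacobi identity on $\Der\A_r$. The first is automatic because $\partial_H$ kills $S$ and $\theta(f_H)/f_H$ lies in $S$ (this is the logarithmic property of $\theta\in\Der\A_r$). The second reduces to the identity $\theta_1(\theta_2(f_H)/f_H) - \theta_2(\theta_1(f_H)/f_H) = [\theta_1,\theta_2](f_H)/f_H$, a direct application of the Leibniz rule.

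Abelianness is then established by computing $[\partial_H, \partial_{H'}]$ as a derivation: it vanishes on $S$ trivially, and on $\theta\in\Der\A_r$ it equals $\partial_H(\theta(f_{H'})/f_{H'}) - \partial_{H'}(\theta(f_H)/f_H)$, each summand vanishing because its argument sits in $S$. Thus the Lie subalgebra spanned by the $\partial_H$ is \emph{literally} abelian, and in particular abelian inside $\Out(\Diff\A_r)$. To finish, I would show that the $3r+3$ classes $[\partial_H]$ form a basis of $\Out(\Diff\A_r)$: non-inner-ness of $\partial_H$ follows from the fact that the centralizer of $S$ in $\Diff\A_r$ is $S$ itself, whence an inner representative $\partial_H=[\xi,-]$ would force $\xi\in S$ with $\theta(\xi) = -\theta(f_H)/f_H$ for every $\theta\in\Der\A_r$, impossible since $\log f_H \notin S$; linear independence is obtained by evaluating on derivations adapted to each fixed hyperplane; and the dimension match to $3r+3$ comes from the $\HH^1$ computation.

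The hard ingredient lies upstream of this corollary: for $r \geq 2$ one needs a triangular basis of $\Der\A_r$ (confirming that $(S, \Der\A_r)$ is triangularizable in the sense of Definition~\ref{def:conditions}) together with enough control of the Lie--Rinehart cohomology appearing on the $E_2$-page of the spectral sequence to pin $\dim\HH^1(\Diff\A_r)$ exactly to $3r+3$. Once those are secured, the corollary drops out of the formal verifications above.
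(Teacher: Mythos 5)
Your overall architecture is the paper's: well-definedness of $\partial_H$ and the vanishing of $[\partial_H,\partial_{H'}]$ are checked on the generators $S$ and $\Der\A_r$ exactly as the paper does, the spanning statement comes from $\dim\HH^1(\Diff\A_r)=3r+3$ (Corollary~\ref{coro:Ar:HH1}, i.e.\ the spectral sequence plus $H^0_S(L,H^1(S,U))=0$), and the reduction of any innerness question to an element of $S$ uses $H^0(S,U)=S$ (Proposition~\ref{prop:h0}), which is your ``centralizer of $S$ in $\Diff\A_r$ is $S$''. So far this matches.

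The genuine gap is the linear independence of the $3r+3$ classes in the quotient by inner derivations, which is where essentially all of the paper's proof of this corollary is spent. Non-innerness of each single $\partial_H$ is not sufficient: one must show that no nontrivial combination $\sum_H\lambda_H\partial_H$ is inner, and your argument for one $H$ (``impossible since $\log f_H\notin S$'', which in precise form is just evaluation on the Euler derivation, giving $\sum_H\lambda_H=-E(u)$, hence $u\in\kk$ and $\sum_H\lambda_H=0$) says nothing about combinations with $\sum_H\lambda_H=0$. Concretely, after the reduction you must exclude the existence of $u\in S$ with $\sum_H\lambda_H\,\theta(f_H)/f_H=-\theta(u)$ for every $\theta\in\Der\A_r$. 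The paper does this by evaluating at the remaining basis elements $C=\alpha_3$ and $D=\alpha_2$ and then applying, for each linear factor $f_{l,j}=x_3-e^{2j\pi i/r}x_l$, the specialization $\epsilon_{l,j}\colon x_3\mapsto e^{2j\pi i/r}x_l$ (and the analogous step for $D$) to isolate the coefficients $\lambda_{f_{l,j}}$ one at a time. Your phrase ``evaluating on derivations adapted to each fixed hyperplane'' is the right instinct --- for instance evaluating at $Q\partial_k$ and taking residues of the resulting logarithmic $1$-form along each hyperplane would also do it --- but as written it is a placeholder rather than an argument, and without it the claimed basis of $\Out(\Diff\A_r)$ is not established.
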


In the pursuit of $\HH^\*(U)$ a key step is the computation of $H^\*(S,U)$.
We succeeded in its calculation when $\*=0,1$ for a family of Lie--Rinehart
algebras that generalizes $\Der\A_r$.
The  result in Corollary~\ref{coro:H1:result:graded} relates $H^1(S,U)$ to
the cokernel of the Saito's matrix --- this is an important object of the theory
with a rich algebraic structure studied, for instance, 
by M.\,Granger, D.\,Mond and M.\,Schulze
in~\cite{schulze}. 

There are several ways in which the calculations performed in this article can
be continued. In particular, following the methods of J.\,Alev and
M.\,Chamarie in~\cite{AC} our findings on the algebra of outer derivations of
$\Diff\A$ can lead to a description of $\Aut(\Diff\A)$ as in \cite{ksa}*{\S7}
and M.\,Suárez-Álvarez and Q.\,Vivas'~\cite{SAV}.

\bigskip

The first author is currently a CONICET postdoctoral fellow and received
support from  BID PICT 2019-00099.  We thank the Universit\'e Clermont
Auvergne for hosting the first author in a postdoctoral position at the
Laboratoire de Math\'ematiques Blaise Pascal during the year
2019-2020.

\bigskip

Unadorned $\Hom$ and $\End$ are taken over $\kk$. The set of natural numbers
~$\NN$ is that of nonnegative integers.
If $n$ and $m$ are positive integers, we denote by $\nset{n,m}$ the set of
integers $k$ such that $n\leq k\leq m$, and 
$\nset{m} \coloneqq \nset{1,m}$.

\section{Generalities}
\subsection{Hyperplane arrangements}

\begin{Definition}
A central \emph{hyperplane arrangement} $\A$ in a finite dimensional vector
space $V$ is a finite set $\{H_1,\ldots,H_\ell\}$ of subspaces of codimension
1.
Choosing a basis of~$V$ we may identify the algebra $S(V^*)$ of coordinates
of~$V$ with $S=\kk[x_1,\ldots,x_n]$: for each $i\in\nset{\ell}$ let
$\lambda_i\in S$ be a linear form with kernel $H_i$. Up to a nonzero scalar,
the \emph{defining polynomial} $Q=\lambda_1\cdots\lambda_\ell\in S $ depends
only on~$\A$.
\end{Definition}

\begin{Definition}
The set of
\emph{derivations tangent to the arrangement} $\A$ is
\[\label{eq:arrangements:derivations}
  \Der \A \coloneqq 
  \left\{ \theta\in\Der(S)~:
    \text{$\lambda_i$ divides $\theta (\lambda_i)$ for every $i\in\nset{\ell}$
}
  \right\}.
\] 
This is a Lie--subalgebra and a sub-$S$-module of the Lie algebra of
derivations $\Der(S)$ of $S$.
\end{Definition}

\begin{Definition}
The arrangement $\A$ is \emph{free} if $\Der\A$ is a free $S$-module. 
\end{Definition}

\begin{Theorem}[Saito's criterion,~\cite{saito}*{Theorem 1.8.ii}]
\label{thm:saito}
A family of $\ell$ derivations $ (\theta_1,\ldots,\theta_\ell )$ in $\Der\A$
is an $S$-basis of $\Der\A$ 
if and only if the determinant of \emph{Saito's matrix}
\[\label{eq:saitomatrix}
  M(\theta_1,\ldots,\theta_\ell)
  \coloneqq
  \begin{pmatrix}
		\theta_1(x_1) & \cdots & \theta_1(x_\ell) \\
		\vdots & ~ & \vdots \\
		\theta_\ell(x_1) & \cdots & \theta_\ell(x_\ell) 
  \end{pmatrix}
\]
is a nonzero scalar multiple of $Q$.
\end{Theorem}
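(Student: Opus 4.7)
The plan is first to establish the divisibility lemma $Q\mid\det M(\theta_1,\ldots,\theta_\ell)$ for any $\ell$-tuple in $\Der\A$, and then to deduce both implications from it. For the lemma, I fix $i$ and make a linear change of coordinates sending $\lambda_i$ to $x_1$, which multiplies $\det M$ only by a nonzero scalar in $\kk$; in the new coordinates the first column of the Saito matrix is $(\theta_j(\lambda_i))_j$, whose entries lie in the ideal $(\lambda_i)$ by the very definition of $\Der\A$, so $\lambda_i\mid\det M$. Since the $\lambda_i$ are distinct linear forms they are pairwise coprime in the UFD $S$, and hence their product $Q$ divides $\det M$.

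For the ``if'' direction, assume $\det M=cQ$ with $c\in\kk^\times$. The $\theta_i$ are $S$-linearly independent because $\det M\neq 0$. For generation, given $\eta\in\Der\A$, Cramer's rule in $\mathrm{Frac}(S)$ writes $\eta=\sum_i h_i\theta_i$ with $h_i=\det M_i^{(\eta)}/\det M$, where $M_i^{(\eta)}$ is obtained from $M$ by replacing its $i$-th row with $(\eta(x_1),\ldots,\eta(x_\ell))$; as $M_i^{(\eta)}$ is again the Saito matrix of an $\ell$-tuple in $\Der\A$, the lemma gives $Q\mid\det M_i^{(\eta)}$, and combined with $\det M=cQ$ this places each $h_i$ in $S$.

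For the ``only if'' direction, assume $(\theta_i)$ is an $S$-basis; the lemma gives $\det M=hQ$ with $h\in S$, and I need $h\in\kk^\times$. I would pin $h$ down by two complementary observations about the cokernel $N\coloneqq\Der(S)/\Der\A$. Locally at $(\lambda_i)$, choosing coordinates so that $\lambda_i=x_1$, the conditions defining $\Der\A$ at primes $(\lambda_j)$ with $j\neq i$ become vacuous (such $\lambda_j$ are units in $S_{(\lambda_i)}$), so $\Der\A_{(\lambda_i)}$ admits the explicit $S_{(\lambda_i)}$-basis $(x_1\partial_{x_1},\partial_{x_2},\ldots,\partial_{x_\ell})$ with Saito determinant $\lambda_i$; as any two bases of the same free module differ by an element of $\GL_\ell(S_{(\lambda_i)})$, the valuation $v_{\lambda_i}(\det M)$ equals $1$. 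Globally, away from $V(Q)$ the defining condition of $\Der\A$ is vacuous so $N$ is supported on $V(Q)$, and since the Saito matrix is a square presentation of $N$ its zeroth Fitting ideal equals $(\det M)$, whose radical therefore lies in $(Q)$ and forces every irreducible factor of $\det M$ to be one of the $\lambda_i$'s. Combining the two observations, $\det M=u\prod_i\lambda_i=uQ$ for some $u\in\kk^\times$.

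The main obstacle is the ``only if'' direction: divisibility by $Q$ is elementary, but promoting it to equality up to a scalar needs both a local computation at each hyperplane to fix the multiplicity and a support argument on the cokernel to rule out extraneous prime factors of $\det M$.
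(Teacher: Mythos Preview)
The paper does not give its own proof of this theorem; it is quoted from Saito~\cite{saito} and is stated here only for later use. Your proposal is correct and is essentially the standard argument one finds in the literature (e.g.\ \cite{OT}*{Theorem~4.19}): the divisibility lemma via a linear change of coordinates, the ``if'' direction by Cramer's rule using that each $M_i^{(\eta)}$ is again a Saito matrix of a tuple in $\Der\A$, and the ``only if'' direction by a local computation at each height-one prime $(\lambda_i)$ together with a support argument on $\coker M$.

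One small simplification for the last step: invoking Fitting ideals is correct but heavier than needed. Since $M$ is a square matrix over the domain $S$, for any prime $\mathfrak p$ one has $(\coker M)_{\mathfrak p}=0$ if and only if $M$ becomes invertible over $S_{\mathfrak p}$, i.e.\ $\det M\notin\mathfrak p$; hence directly $\mathrm{Supp}(\coker M)=V(\det M)$. Combined with $\mathrm{Supp}(\coker M)\subseteq V(Q)$ (which you correctly deduce from the fact that the defining conditions of $\Der\A$ become vacuous once each $\lambda_j$ is inverted), this already forces every irreducible factor of $\det M$ to be among the $\lambda_i$, and your local computation $v_{\lambda_i}(\det M)=1$ finishes it.
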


The notion of freeness connects arrangements of hyperplanes with commutative
algebra, algebraic geometry and combinatorics.  While not a property of generic
hyperplane arrangements, many of the motivating
examples of hyperplane arrangements are free.  Saito’s criterion in
Theorem~\ref{thm:saito} is perhaps the most practical way to prove freeness,
though there are other methods to prove this condition ---see
A.\,Bigatti, E.\,Palezzato and M.\,Torielli's~\cite{bigatti} for a discussion
on the state of the art.



\begin{Example}\label{ex:braids:notilde}
Let $n\geq2$, $E=\AA^n$ the affine space with coordinate ring
$S=\kk[x_1,\ldots,x_n]$.
The braid arrangement $\B_n$ in $E$ has hyperplanes $H_{ij}$ with equation
$x_i-x_j = 0$, $1\leq i < j \leq n$, so that the defining polynomial
is $Q = \prod_{1\leq i < j \leq n}\left( x_j-x_i \right)$. 

Consider the derivations $\theta_1,\ldots,\theta_n$ of $S$ defined for 
$k\in\nset{n}$ by
\begin{align}\label{eq:braids:notilde:basis}
  & 
  \theta_{1}(x_k) = 1,
  &&
  \theta_i(x_k) 
  = (x_k-x_1)\ldots(x_k-x_{i-1})
  \qquad\text{if $i\geq2$}.
\end{align}
These derivations satisfy $(x_k-x_j) \mid \theta_i(x_k-x_j)$ for any
$i,j,k\in\nset{n}$ and therefore belong to $\Der\B_{n}$.  The Saito's matrix
$\left( \theta_i(x_k) \right)$ is triangular and its determinant is 
$Q$. By Saito's Criterion, $\Der\B_{n}$ is a free $S$-module with
basis $\left\{ \theta_1,\ldots,\theta_n \right\}$.
\end{Example}

\begin{Example}\label{ex:braids:tilde}
Let $n\geq1$ and $E=\AA^{n+1}$ be the affine space with coordinate ring
$S=\kk[x_0,x_1,\ldots,x_n]$.
As in Example~\ref{ex:braids:notilde} above, the arrangement $\B_{n+1}$
in $E$ has equation $\prod_{0\leq i < j\leq n}(x_i-x_j)$.

Consider the subspace $V=\left\{ 0 \right\}\times\AA^n$ of $E$, defined by
the equation $ x_0 = 0$, and the hyperplanes $\tilde H_{ij}$ of $V$ defined by
$x_i-x_j=0$ for $1\leq i < j \leq n$.  We call $\tilde\B_n$ the arrangement
formed by these hyperplanes, so that $\tilde\B_n$ is defined by equation
$x_1\ldots x_n\prod_{0\leq i < j\leq n}(x_i-x_j)=0$.  The derivations
$\alpha_1,\ldots,\alpha_n$ of
$S=\kk[x_1,\ldots,x_n]$ defined for $k\in\nset{n}$~by 
\begin{align*}
  \alpha_1(x_k) = x_k,
  &&
  \alpha_i(x_k) 
  = 
  \begin{cases*}
    0 & if $i>k$; \\
    x_k\prod_{j< i}(x_k-x_j) & if $i\leq k$
  \end{cases*}
  \quad\text{if $i\geq2$}
\end{align*} 
belong to $\Der\tilde\B_n$. Thanks to Saito's Criterion,
$(\alpha_1,\ldots,\alpha_n)$ is a basis of $\Der\tilde\B_n$.
\end{Example}

\begin{Example}
Let $V$ be a finite-dimensional vector space. We say that $\sigma\in\GL(V)$ is
a pseudo-reflection if $\sigma$ is of finite order and fixes a hyperplane
$H_\sigma$ of $V$, and it is a reflection if this order is~$2$.
A finite subgroup $G$ of the group of automorphisms of $V$
is a \emph{(pseudo-) reflection group} if it is generated by (pseudo-)
reflections, and the set of reflecting
hyperplanes~$\A(G)$ of a reflection group $G$ is the \emph{reflection
arrangement} of~$G$. It is a result by H\@. Terao in~\cite{terao} that
every
reflection arrangement over $\kk = \CC$ is free.

Consider the $n$th braid
arrangement $\B_n$ of Example~\ref{ex:braids:notilde}: identifying the
reflection with respect to the plane $x_i-x_j=0$ with the permutation
$(ij)\in\SS_n$ we see that $\B_n=\A(\SS_n)$.
\end{Example}

\begin{Example}\label{ex:wreath}
Let $r,n\geq1$ and consider the arrangement $\A_r^n$ in $V=\kk^n$ defined by
\[\label{eq:Q:wreath} 0 = x_1\ldots x_n\prod_{1\leq i<j\leq n}(x_j^r-x_i^r).
\] Taking $r=1$ we see that $\A_1^n=\tilde\B_n$ for every $n$.  When $r\geq2$,
let $G=C_r\wr\SS_n$ be the wreath product of the cyclic group $C_r$ of order
$r$ and the symmetric group $\SS_n$. We see that $\A_r^n$ is the reflection
arrangement of $G$, this is, $\A_r^n = \A(C_r\wr\SS_n)$.
There is a well-known basis of $\Der\A_r^n$ in \cite{OT}*{\S B} that consists of
the derivations
$\theta_1,\ldots,\theta_n$ of $S=\kk[x_1,\ldots,x_n]$ defined
for $1\leq k,m\leq n$ by
\(
  \theta_m(x_k) = x_k^{(m-1)r+1}. 
\)
Consider the derivations $\alpha_1,\ldots,\alpha_n$ of $S$
defined for $1\leq k\leq n$ and $2\leq m\leq n$ by
\begin{align}\label{eq:symmetric}
  & \alpha_1(x_k) = x_k,
  &&\alpha_m(x_k) = x_k\prod_{i=1}^{m-1}(x_k^r - x_i^r).
\end{align}
These derivations belong to $\Der\A_r^n$: evidently
$\alpha_1=\theta_1$, and  if $m\geq2$ then
\[
  \alpha_m = \theta_m - s_1\theta_{m-1} + \ldots +(-1)^{m-1}s_{m-1}\theta_1,
\]
where $s_j = \sum_{1\leq i_1<\ldots<i_{j}\leq m-1}x_{i_1}^r\cdots x_{i_j}^r$
is the $j$th elementary symmetric polynomial in variables
$x_1^r,\ldots,x_{m-1}^r$ for $1\leq j\leq m-1$.
For $1\leq k\leq n$
\begin{align*}
\MoveEqLeft
  \left( \theta_m - s_1\theta_{m-1} + \ldots +(-1)^{m-1}s_{m-1}\theta_1 \right)
    (x_k)
  \\
  & = x_k^{(m-1)r+1} - s_1x_k^{(m-2)r+1} + \ldots + (-1)^{m-1}s_{m-1}x^k
  = x_k\prod_{i=1}^{m-1}(x_k^r - x_i^r),
\end{align*}
which equals $\alpha_m(x_k)$.
Saito's matrix $\left(\alpha_m(x_k) \right)$ is diagonal and its
determinant is 
\[
  \prod_{k=1}^n\alpha_k(x_k) 
  = \prod_{k=1}^n x_k\prod_{i=1}^{k-1}(x_k^r - x_i^r)
  = x_1\ldots x_n\prod_{1\leq i<k\leq n}(x_k^r-x_i^r).
\]
It follows from Saito's criterion that $\alpha_1,\ldots,\alpha_n$ is a
basis of $\Der\A_r^n$. 
\end{Example}

\begin{Example}\label{ex:Ar}
Let $r\geq1$. 
The arrangement $\A_r\coloneqq\A_r^3$ is defined by the nullity of
\[
    Q(\A_r) \coloneqq x_1x_2x_3(x_2^r-x_1^r)(x_3^r-x_1^r)(x_3^r-x_2^r).
\]
The basis of $\Der\A_{r}$ in~\eqref{eq:symmetric} consist in this case of the
derivations $\alpha_1,\alpha_2,\alpha_3$ of $S= \kk[x_1,x_2,x_3]$ with Saito's
matrix
\[
  \begin{pmatrix}
    x_1 & x_2 & x_3 \\
    0 & x_2(x_2^r-x_1^r) & x_3(x_3^r-x_1^r) \\
    0 & 0 & x_3(x_3^r-x_2^r)(x_3^r-x_1^r)
  \end{pmatrix}
\] 
\end{Example}

\subsection{Lie--Rinehart algebras}\label{subsec:l-r}

\begin{Definition}
Let $S$ and $(L,[-,-])$ be, respectively, a commutative and a Lie algebra
endowed with a morphism of Lie algebras $L\to\Der(S)$ that we write
$\alpha\mapsto\alpha_S$  and  a left $S$-module structure on $L$ which we
simply denote by juxtaposition. We say that the pair $(S,L)$ is a
\emph{Lie--Rinehart algebra}, or that $L$ is a Lie--Rinehart algebra over $S$, 
if the equalities
\begin{align}
  &(s\alpha )_S(t) = s\alpha_S(t),
  &&[\alpha,s\beta] = s[\alpha,\beta] + \alpha_S(s)\beta
\end{align} 
hold whenever $s,t\in S$ and $\alpha, \beta \in L$.
\end{Definition}

If $S$ is a commutative algebra and $L$ is a Lie-subalgebra of the Lie algebra
of derivations $\Der S $ that is at the same time an $S$-submodule then $L$ is
an Lie--Rinehart algebra over $S$. This applies to our situation of interest:

\begin{Proposition}
Let $\A$ be a hyperplane arrangement in a vector space $V$. 
The Lie algebra of derivations $\Der\A$ of $\A$
is a Lie--Rinehart algebra over the algebra of coordinates of $V$.
\end{Proposition}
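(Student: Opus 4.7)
The proposition follows from the general principle stated immediately before it: any subset of $\Der(S)$ that is simultaneously a Lie subalgebra and an $S$-submodule inherits a Lie--Rinehart structure over $S$. So my plan is to verify these two closure properties for $\Der\A$, both of which come down to a direct calculation using the defining condition $\lambda_i \mid \theta(\lambda_i)$.

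First I would unpack the definition: for each $\theta \in \Der\A$ and each $i \in \nset{\ell}$, pick $g_i^\theta \in S$ with $\theta(\lambda_i) = g_i^\theta \lambda_i$. For the $S$-module closure, I take $s \in S$ and $\theta \in \Der\A$; then $(s\theta)(\lambda_i) = s\,\theta(\lambda_i) = (s g_i^\theta)\lambda_i$, so $\lambda_i$ divides $(s\theta)(\lambda_i)$ and $s\theta \in \Der\A$.

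For the Lie closure, I take $\theta, \eta \in \Der\A$ and compute
\[
[\theta,\eta](\lambda_i) = \theta(g_i^\eta \lambda_i) - \eta(g_i^\theta \lambda_i) = \theta(g_i^\eta)\lambda_i + g_i^\eta g_i^\theta \lambda_i - \eta(g_i^\theta)\lambda_i - g_i^\theta g_i^\eta \lambda_i = \bigl(\theta(g_i^\eta) - \eta(g_i^\theta)\bigr)\lambda_i,
\]
which is divisible by $\lambda_i$, so $[\theta,\eta] \in \Der\A$. These two checks, together with the fact that $\Der(S)$ is itself a Lie--Rinehart algebra over $S$ with anchor the identity, show that $\Der\A$ inherits both structures and that the Lie--Rinehart axioms --- the Leibniz identity $[\alpha, s\beta] = s[\alpha,\beta] + \alpha_S(s)\beta$ and the compatibility $(s\alpha)_S(t) = s\alpha_S(t)$ --- hold automatically, since they already hold in the ambient $\Der(S)$.

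There is no real obstacle here: the only thing one must notice is that the product $g_i^\theta g_i^\eta$ cancels with $g_i^\eta g_i^\theta$ in the bracket computation, which uses commutativity of $S$ in an essential way. The rest is formal.
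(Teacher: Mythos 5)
Your proposal is correct and follows the same route the paper takes: the paper's Definition of $\Der\A$ already records that it is a Lie subalgebra and a sub-$S$-module of $\Der(S)$, and the Proposition is then an instance of the general principle stated just before it, so the paper gives no further argument. Your explicit divisibility checks (including the cancellation of $g_i^\theta g_i^\eta$ by commutativity of $S$) simply supply the details the paper leaves implicit, with the anchor on $\Der\A$ being the inclusion into $\Der(S)$.
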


\begin{Definition}\label{def:conditions}
Let $n\geq1$, $S=\kk[x_1,\ldots,x_n]$ and $L$ be a subset  of
derivations of $S$ such that $(S,L)$ is a Lie-Rinehart algebra.
\begin{thmlist}
\item
We call $L$ \emph{triangularizable} if 
$L$ is a free $S$-module that admits a basis  given by derivations
$\alpha_1,\ldots,\alpha_n$ satisfying  the two conditions
\begin{align*}
  &\alpha_i(x_j) = 0 \quad\text{if $i>j$,}
  &&\alpha_1(x_1)\cdots\alpha_n(x_n)\neq0. 
\end{align*}

\item
We say that $L$ satisfies the \emph{Bézout condition} if in addition
for each  $k$ in $\nset{n-1}$, the element $\alpha_k(x_k)$ of $S$ is coprime
with the determinant of the matrix 
\[
  \begin{pmatrix}
    \alpha_k(x_{k+1}) & \alpha_{k+1}(x_{k+1}) & 0 & \cdots & 0 
    \\
    \vdots & \vdots & \ddots & \ddots & \vdots
    \\
    \alpha_k(x_{n-2}) & \alpha_{k+1}(x_{n-2}) & \cdots & \alpha_{n-2}(x_{n-2}) 
    & 0
    \\ 
    \alpha_{k}(x_{n-1}) & \alpha_{k+1}(x_{n-1}) &\cdots &\cdots 
    & \alpha_{n-1}(x_{n-1}) 
    \\
    \alpha_{k}(x_{n}) &\alpha_{k+1}(x_{n}) & \cdots& \cdots & \alpha_{n-1}(x_{n})
  \end{pmatrix}
\]
\end{thmlist}
\end{Definition}

\begin{Example}\label{ex:braids:2conditions}
For any $n\geq2$ the Lie--Rinehart algebra $\Der\B_n$ is triangular and
satisfies the Bézout condition with the basis~$\left\{
\theta_1,\ldots,\theta_n \right\}$ given in Example~\ref{ex:braids:notilde}.
The same goes to $\Der\tilde\B_n$ with the basis given in
Example~\ref{ex:braids:tilde}.
\end{Example}

\begin{Example}\label{ex:wreath:triangular}
Let $r,n\geq1$. 
The Lie--Rinehart algebra associated to $\A(C_r\wr\SS_n)$ is triangularizable,
as follows immediately from Example~\ref{ex:wreath}.
\end{Example}

\begin{Example}
Let $r\geq1$. The arrangement $\A_r=\A(C_r\wr\SS_3)$ from Example~\ref{ex:Ar} 
is triangularizable thanks to Example~\ref{ex:wreath}.
Moreover, it satisfies the Bézout condition: 
indeed, $\alpha_2(x_2) = x_2 (x_2^r-x_1^r)$
is coprime with $\alpha_2(x_3) = x_3 (x_3^r-x_1^r)$, and the determinant
\[
  \det
  \begin{pmatrix}
    \alpha_1(x_2) & \alpha_1(x_3) \\
    \alpha_2(x_2) & \alpha_2(x_3)
  \end{pmatrix}
  =
  \det
  \begin{pmatrix}
    x_2  & x_3  \\
    x_2 (x_2^r-x_1^r) & x_3 (x_3^r-x_1^r)
  \end{pmatrix}
  = x_2  x_3  \left( x_3^r-x_2^r \right)
\]
is coprime with 
$\alpha_1(x_1) = x_1 $.
\end{Example}

\subsection{Differential operators associated to an arrangement}

Remember from J.\,C.\,McConnell and J.\,C.\,Robson's~\cite{MR}*{\S 15} that
the algebra~$\Diff S$ of differential operators on $S=\kk[x_1,\ldots,x_n]$ is
the subalgebra of $\End
S$ generated by $\Der S$ and the set of maps given by left multiplication by
elements of $S$.
Recall as well from~\cite{MR}*{\S5} that 
if $R$ is an algebra and $I\subset
R$ is a right ideal, the largest subalgebra $\mathbb I_R(I)$ of $R$ that
contains $I$ as an ideal ---the \emph{idealizer} of $I$ in $R$--- 
is~$\{r\in R:rI\subset I\}$.

\begin{Definition}
Let $\A$ be a central arrangement of hyperplanes with defining polynomial~$Q$.
The \emph{algebra of differential
operators tangent to the arrangement}~$\A$ is
\[
\Diff(\A)
	= \bigcap_{n\geq1} \mathbb I_{\Diff(S)}(Q^n\Diff (S)).
\]
\end{Definition}

As seen in \cite{calderon} for $\kk=\CC$ or
in~\cite{differential-arrangements} for $\kk$ of characteristic zero, if $\A$
is free then the algebra~$\Diff\A$ coincides with the sub-associative algebra
of~$\End(S)$ generated by $\Der\A$ and the set of maps given by left
multiplication by elements of $S$.

\begin{Example}
The arrangement $\A = \tilde\B_2$ in $\kk^2$
with equation $0=xy(y-x)$ admits, by~\cite{kola}*{\S5}, a presentation
of $\Diff\A$ adapted from~\cite{ksa}: the two derivations
\begin{align*}
  &E=x\partial_x + y\partial_y,
  && D = y(y-x)\partial_y
\end{align*}
of $\kk[x,y]$ form a basis of $\Der\A$, and 
the algebra~$\Diff\A$ is generated by the symbols $x$, $y$, $D$ and
$E$ subject to the relations
\begin{align}
  & [y,x] = 0, \\
  & [D,x] = 0, && [D,y] = y(y-x), \\
  & [E,x] = x, && [E,y] = y, && [E,D] = D.
\end{align}
\end{Example}


Given a Lie-Rinehart algebra $(S,L)$, a \emph{Lie--Rinehart module} ---or
$(S,L)$-module--- is a vector space $M$ which is at the same time an
$S$-module and an $L$-Lie module in such a way that
if $s\in S$, $\alpha\in L$ and $m\in M$ 
then
\begin{align}\label{eq:L-Rmodule}
  &\left( s\alpha \right)\cdot m = s\cdot(\alpha\cdot m), 
  &&\alpha\cdot (s\cdot m) = (s\alpha)\cdot m + \alpha_S(s)\cdot m.
\end{align}  

\begin{Theorem}[\cite{hueb}*{\S1}]
Let $(S,L)$ be a Lie-Rinehart algebra.
\begin{thmlist}
\item There exists an associative algebra $U=U(S,L)$, the \emph{universal
enveloping algebra of $(S,L)$}, endowed with a morphism of algebras $i:S\to U$
and a morphism of Lie algebras $j:L\to U$ satisfying, for $s\in S$ and
$\alpha\in L$,
\begin{align}\label{eq:universal}
  &i(s)j(\alpha) 	= j(s\alpha),
  && j(\alpha)i(s) -i(s)j(\alpha) = i(\alpha_S(s)). 
\end{align}

\item The algebra $U$ is universal with these properties. 

\item The category of $U$-modules is isomorphic to the category
of $(S,L)$-modules.
\end{thmlist}
\end{Theorem}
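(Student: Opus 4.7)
The plan is to mimic the PBW-style construction of enveloping algebras: I prove \thmitem{1} from an explicit presentation, deduce \thmitem{2} from its universal property, and obtain \thmitem{3} by applying \thmitem{2} to the algebra $\End_\kk(M)$.

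For \thmitem{1}, I would consider the tensor algebra $T = T_\kk(S \oplus L)$ and set $U$ to be its quotient by the two-sided ideal generated by $1_S - 1_T$ together with the relations
\begin{align*}
  & s \otimes t - st,
  && \alpha \otimes \beta - \beta \otimes \alpha - [\alpha, \beta], \\
  & s \otimes \alpha - s\alpha,
  && \alpha \otimes s - s \otimes \alpha - \alpha_S(s),
\end{align*}
for $s,t \in S$ and $\alpha,\beta \in L$, in which $s\alpha \in L \subset T$ refers to the $S$-module structure on $L$. Letting $i$ and $j$ be the composites of the canonical inclusions $S \hookrightarrow T$ and $L \hookrightarrow T$ with the projection $T \twoheadrightarrow U$, the first family of relations together with the identification $1_S = 1_T$ make $i$ a morphism of unital algebras, the commutator relation makes $j$ a morphism of Lie algebras, and the last two relations are precisely the two equalities~\eqref{eq:universal}.

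For \thmitem{2}, given an associative algebra $A$ equipped with a morphism of algebras $\varphi : S \to A$ and a morphism of Lie algebras $\psi : L \to A$ satisfying the analogues of~\eqref{eq:universal}, the universal property of~$T$ yields a unique algebra map $T \to A$ agreeing with $\varphi$ on $S$ and with $\psi$ on $L$; by hypothesis it kills every generator of the ideal above and thus descends uniquely to an algebra morphism $U \to A$ intertwining $(i,j)$ with $(\varphi,\psi)$. For \thmitem{3}, given a left $U$-module $M$, setting $s\cdot m = i(s)m$ and $\alpha\cdot m = j(\alpha)m$ endows $M$ with an $(S,L)$-module structure, as the two conditions~\eqref{eq:L-Rmodule} translate one-to-one into the two relations~\eqref{eq:universal} applied to $m$. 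Conversely, an $(S,L)$-module structure on $M$ defines maps $S \to \End_\kk(M)$ and $L \to \End_\kk(M)$ fitting the hypothesis of~\thmitem{2}, so that \thmitem{2} produces a unique algebra map $U \to \End_\kk(M)$, namely a $U$-action on $M$. These two operations are visibly mutually inverse and functorial, hence define the required isomorphism of categories.

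There is no serious obstacle in this argument: everything reduces to setting up the presentation correctly and checking that each relation plays the intended role. The genuinely substantive companion result, not needed for \thmitem{1}--\thmitem{3} but heavily used downstream, is Rinehart's Poincaré--Birkhoff--Witt theorem, which identifies $\gr U$ with the symmetric $S$-algebra $\mathrm{Sym}_S(L)$ and thereby provides the $S$-basis by which one computes effectively in $U$; I would simply cite it from~\cite{hueb}*{\S1} when needed in subsequent sections.
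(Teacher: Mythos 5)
Your argument is correct, and there is nothing in the paper to compare it against: the theorem is quoted from Huebschmann's paper and no proof is given here. Your route — presenting $U$ as the quotient of the tensor algebra $T_\kk(S\oplus L)$ by the ideal identifying $1_S$ with $1_T$, imposing the multiplication of $S$, the bracket of $L$, and the two mixed relations, then reading off \eqref{eq:universal}, the universal property, and the module correspondence — is the standard presentation and does establish \thmitem{1}--\thmitem{3}; in particular your translation of \eqref{eq:L-Rmodule} into \eqref{eq:universal} acting on $m$, and the use of $\End(M)$ for the converse direction, are exactly right. For the record, the construction in the cited sources is organized slightly differently: Rinehart (and Huebschmann after him) forms the semidirect product $\kk$-Lie algebra $S\rtimes L$, takes its ordinary universal enveloping algebra, passes to the subalgebra generated by the image of $S\oplus L$, and then imposes the relation identifying $s\cdot(t,\alpha)$ with $(st,s\alpha)$; this produces the same universal object, and your presentation is if anything the more direct way to verify \thmitem{2}. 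Two small caveats: the theorem as stated makes no claim that $i$ and $j$ are injective (that is where the real work lies, and it is not needed here), and the PBW-type statement you mention — that $\gr U\cong\mathrm{Sym}_S(L)$, whence the $S$-basis \eqref{eq:pbw} used later — requires $L$ to be $S$-projective and is due to Rinehart; the paper cites \cite{rinehart}*{\S3} for it rather than \cite{hueb}, and in the situations of the paper $L$ is free, so the hypothesis is automatic.
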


\begin{Example}\label{ex:weyl}
If $S = \kk[x_1 , \ldots , x_n ]$ then the full Lie algebra of derivations $L =
\Der S$ is a Lie–Rinehart algebra and its enveloping algebra is isomorphic to
the algebra of differential operators $\Diff(S) = A_n$, the $n$th Weyl algebra.
\end{Example}

The following result ---\cite{narvaez}*{\S12} when $\kk=\CC$
and~\cite{tesis}*{Theorem 2.19} for $\kk$ of characteristic zero--- is our
motivation to consider Lie--Rinehart algebras in the algebraic aspects of
hyperplane arrangements.

\begin{Theorem}
\label{thm:diff-LR}
Let $\A$ be a free hyperplane arrangement on a vector space~$V$ and let $S$ be
the algebra of coordinate functions on $V$. There is a canonical isomorphism
of algebras
\[
	U(S,\Der\A)\cong\D(\A).
\]
\end{Theorem}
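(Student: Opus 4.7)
The plan is to produce a canonical algebra morphism $\phi\colon U(S,\Der\A)\to\D(\A)$ via the universal property of the enveloping algebra stated just before the theorem, check surjectivity using the description of $\D(\A)$ recalled from~\cite{calderon} and~\cite{differential-arrangements}, and then prove injectivity by comparing associated graded algebras.

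For the construction I would take the inclusion $i\colon S\hookrightarrow\D(\A)$ (as left multiplication operators on $S$) together with the inclusion $j\colon\Der\A\hookrightarrow\D(\A)$, and check that they satisfy the two relations in~\eqref{eq:universal}: the equality $i(s)j(\alpha)=j(s\alpha)$ encodes compatibility with the $S$-module structure on $\Der\A$ (both sides send $t\in S$ to $s\alpha(t)$), while $j(\alpha)i(s)-i(s)j(\alpha)=i(\alpha_S(s))$ is just the Leibniz rule read inside $\End(S)$. The universal property then produces a unique algebra map $\phi$ extending $i$ and $j$.

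Surjectivity is immediate from the result attributed in the text to Calderon~Moreno and Suárez-Álvarez: since $\A$ is free, $\D(\A)$ is generated as a subalgebra of $\End(S)$ by $S$ and $\Der\A$, both of which lie in the image of $\phi$.

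The main obstacle is injectivity, which I would attack by comparing filtrations. Because $\A$ is free, $\Der\A$ is a free (in particular projective) $S$-module, so Rinehart's PBW theorem applies and provides a filtration $F_\bullet U(S,\Der\A)$ whose associated graded identifies canonically with the symmetric $S$-algebra $S_S(\Der\A)$. On the target side, $\D(\A)$ inherits the order filtration from $\D(S)$, and $\phi$ is a filtered map since $S$ lands in degree $0$ and $\Der\A$ in degree $1$. Fixing a Saito basis $\theta_1,\ldots,\theta_n$ of $\Der\A$ as in Theorem~\ref{thm:saito} one obtains $\gr U(S,\Der\A)\cong S[T_1,\ldots,T_n]$, and $\gr\phi$ sends $T_i$ to the principal symbol $\sigma_i=\sum_{j=1}^n\theta_i(x_j)\xi_j$ viewed in $\gr\D(S)\cong S[\xi_1,\ldots,\xi_n]$. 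Saito's criterion says that the matrix $\bigl(\theta_i(x_j)\bigr)$ has determinant a nonzero scalar multiple of the defining polynomial~$Q$, hence is invertible over the fraction field of $S$; it follows that $\sigma_1,\ldots,\sigma_n$ are algebraically independent over $S$, so $\gr\phi$ is injective. Since the filtration on $U(S,\Der\A)$ is exhaustive, this upgrades to injectivity of $\phi$ and the theorem follows.
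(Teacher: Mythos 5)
Your argument is correct. Note, though, that the paper itself gives no proof of Theorem~\ref{thm:diff-LR}: it is quoted from \cite{narvaez}*{\S12} (for $\kk=\CC$) and \cite{tesis}*{Theorem 2.19} (characteristic zero), and the argument in those sources is essentially the one you give --- use the universal property of $U(S,\Der\A)$ to get a filtered map $\phi$ into $\End(S)$, get surjectivity onto $\Diff\A$ from the generation statement of \cite{calderon} and \cite{differential-arrangements} (which the paper explicitly recalls for free arrangements, so using it as a black box is legitimate and unavoidable: it is the genuinely hard input), and get injectivity by passing to associated graded algebras, where Rinehart's PBW theorem for the $S$-free module $\Der\A$ identifies $\gr U$ with $S[T_1,\ldots,T_n]$ and Saito's criterion ($\det(\theta_i(x_j))$ a nonzero multiple of $Q$) makes the symbols $\sigma_i=\sum_j\theta_i(x_j)\xi_j$ algebraically independent over $S$, hence $\gr\phi$ injective and, the filtration being exhaustive and bounded below, $\phi$ injective. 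The only point worth spelling out if you wrote this in full is that you take on $\Diff\A$ the filtration induced from the order filtration of $\Diff(S)$, so that $\gr\Diff\A$ really embeds in $S[\xi_1,\ldots,\xi_n]$; with that said, your proof is complete and matches the standard one.
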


\begin{Proposition}\label{prop:braidshh}
For $n\geq1$ there is an isomorphism of algebras
\[\label{eq:diffBn}
  \Diff \B_{n+1} \cong A_1 \otimes \Diff\tilde\B_n.
\]
\end{Proposition}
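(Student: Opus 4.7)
The plan is to realize $\B_{n+1}$, after an affine change of coordinates, as the Cartesian product of an affine line and $\tilde\B_n$, and then to convert this geometric decomposition into the claimed tensor product via Theorem~\ref{thm:diff-LR}.

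First I would change coordinates on $S=\kk[x_0,x_1,\ldots,x_n]$ by setting $t=x_0$ and $y_i=x_i-x_0$ for $i\in\nset{n}$, which yields an isomorphism $S\cong\kk[t]\otimes\kk[y_1,\ldots,y_n]$. Under this substitution the defining polynomial of $\B_{n+1}$ becomes
\[
  \prod_{0\leq i<j\leq n}(x_j-x_i)
  = \prod_{j=1}^n y_j \cdot \prod_{1\leq i<j\leq n}(y_j-y_i),
\]
which is precisely the defining polynomial of $\tilde\B_n$ in the variables $y_1,\ldots,y_n$. In particular none of the reflecting hyperplanes of $\B_{n+1}$ involves $t$, so $\partial_t\in\Der\B_{n+1}$; and taking the basis $\alpha_1,\ldots,\alpha_n$ of $\Der\tilde\B_n$ from Example~\ref{ex:braids:tilde} and extending it to derivations of $\kk[t,y_1,\ldots,y_n]$ that annihilate $t$ yields $n$ further derivations in $\Der\B_{n+1}$. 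The corresponding Saito matrix with respect to $(t,y_1,\ldots,y_n)$ is block-triangular with the scalar $1$ and the Saito matrix of $\tilde\B_n$ on the diagonal, so by Theorem~\ref{thm:saito} the family $(\partial_t,\alpha_1,\ldots,\alpha_n)$ is an $S$-basis of $\Der\B_{n+1}$.

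Since each $\alpha_j$ has coefficients in $\kk[y]$ and kills $t$, every bracket $[\partial_t,\alpha_j]$ vanishes. Hence the Lie--Rinehart algebra $(S,\Der\B_{n+1})$ is the tensor product of $(\kk[t],\kk[t]\partial_t)$ and $(\kk[y_1,\ldots,y_n],\Der\tilde\B_n)$. Applying Theorem~\ref{thm:diff-LR} to both $\B_{n+1}$ and $\tilde\B_n$, and Example~\ref{ex:weyl} to the $t$-factor, it remains only to identify the enveloping algebra of this tensor product Lie--Rinehart algebra with $A_1\otimes\Diff\tilde\B_n$. I would do this by building a map $A_1\otimes\Diff\tilde\B_n\to\Diff\B_{n+1}$ from the universal properties of the two factors, using crucially that their images commute inside $\Diff\B_{n+1}$, and checking bijectivity by comparing the PBW filtrations on both sides.

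The principal obstacle is this last step: the compatibility of the enveloping-algebra construction with tensor products of Lie--Rinehart algebras is not isolated in the preceding material, so it must either be cited as a standard fact or established directly through the universal-property and PBW argument above.
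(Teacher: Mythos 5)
Your proposal is correct and is essentially the paper's own argument: the same change of coordinates $t=x_0$, $y_i=x_i-x_0$, the same identification of the resulting Lie--Rinehart algebra as the product of $(\kk[t],\Der\kk[t])$ and $(\kk[y_1,\ldots,y_n],\Der\tilde\B_n)$, and the same appeal to Theorem~\ref{thm:diff-LR} and Example~\ref{ex:weyl} (the paper transports its explicit basis $\theta_1,\ldots,\theta_{n+1}$ instead of reproving freeness by Saito's criterion, but this is only a bookkeeping difference). The step you flag as the principal obstacle --- that the enveloping algebra of the product Lie--Rinehart algebra is the tensor product of the enveloping algebras --- is asserted in the paper without further justification, so your universal-property and PBW sketch is, if anything, slightly more explicit than the original.
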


\begin{proof}
Let $n\in\NN$, $S= \kk[x_0,x_1,\ldots,x_n]$, $T= \kk[y_1,\ldots,y_n]$ and
observe that the unique morphism of algebras $\kk[z]\otimes T\to S$ given by
$z\mapsto x_0$ and $y_k\mapsto x_k-x_0$ if $k\geq1$ is an
isomorphism ---we are identifying $z$ with $z\otimes 1$ and
$y_k$ with $1\otimes y_k$. 

The derivations in the
basis of $\Der\B_{n+1}$ given in Example~\ref{ex:braids:notilde} induce
derivations
$\tilde\theta_1,\ldots,\tilde\theta_{n+1}$ on $\kk[z]\otimes T$.
 For
$1\leq i\leq n+1$ and $1\leq k \leq n$ these derivations satisfy
\begin{align*}
  & \tilde \theta_{1} : 
    \begin{cases*}
      z\mapsto 1 ; \\
      y_k \mapsto 0 ;
    \end{cases*}
  && \tilde \theta_{2} : 
  \begin{cases*}
      z\mapsto 0 ; \\
      y_k \mapsto y_k;
    \end{cases*}
  && \tilde \theta_{i} : 
  \begin{cases*}
      z\mapsto 0 ; \\
      y_k \mapsto y_k  \prod_{j=1}^{i-1}(y_k-y_j)   
    \end{cases*}
  \quad\text{if $i\geq3$.}
\end{align*}
The Lie algebra $\Der S$ is isomorphic to the Lie algebra product
$\Der\kk[z]\times \Der\tilde\B_n$: the derivations $\tilde\theta_i$ with
$i\geq2$ correspond to the $\alpha_i$'s in Example~\ref{ex:braids:tilde}.
It follows that the enveloping algebra of the Lie--Rinehart pair
$(S,\Der\B_{n+1})$ is isomorphic to the product
$U(\kk[z],\Der\kk[z])\times U(T,\Der\tilde\B_n)$. The result is
now a consequence of Theorem~\ref{thm:diff-LR} and Example~\ref{ex:weyl}.
\end{proof}

\begin{Definition}\label{def:ort}
Let $n\geq1$, $ S=\kk[x_1,\ldots,x_n]$ and $L$ a triangularizable
Lie--Rinehart algebra over $S$ with basis $(\alpha_1,\ldots,\alpha_n)$.
We say that $L$ satisfies
the \emph{orthogonality condition} if there exists a family
$(u_1,\ldots,u_n)$ of elements of $U$ and $f_k^i\in S$ for $1\leq k \leq n$
and
$1\leq i\leq n-1$ such that
\begin{align*}
  & u_k = \alpha_n + \sum_{i=1}^{n-1}f_k^i\alpha_i,
  && [u_k,x_l]=0    \quad\text{if $k\neq l$.} 
\end{align*}
\end{Definition}

\begin{Example}\label{ex:braids:orthogonality}
Consider for $n\geq2$ the Lie--Rinehart algebra $\Der\B_n$ from
Example~\ref{ex:braids:notilde}. 
The family $( u_1,\ldots,u_n )$ of elements of~$U$ defined for
$k\in\nset{n}$ by
\[
  u_k 
  = \sum_{i=k}^n(-1)^{n-i}\prod_{j=i+1}^n\left( x_j-x_k \right)\theta_i
\]
is such that $[u_k,x_l]=0$, whence the orthogonality condition is
satisfied. The Lie--Rinehart algebra $\Der\tilde\B_n$ from
Example~\ref{ex:braids:tilde} also satisfies this condition with
a similar choice of orthogonal~elements.
\end{Example}

Let $r\geq1$ and $\A_r=\A(C_r\wr\SS_3)$.  Let
$S=\kk[x_1,x_2,x_3]$ and $L=\Der\A_{r}$ be the Lie-Rinehart algebra associated
to $\A_{r}$. The derivations $\left\{ \alpha_1,\alpha_2,\alpha_3
\right\}$ given in Example~\ref{ex:Ar}
make of $L$ a triangular Lie algebra that satisfies the Bézout condition.
We identify the universal enveloping algebra of $L$ with $\Diff\A_r$.

\begin{Proposition}\label{prop:Ar:basis}
The Lie--Rinehart algebra associated to $\A_r$ together with the family
$\left\{ u_1,u_2,u_3\right\}$ of elements of $\Diff\A_r$ defined by
\begin{align*}
  & u_1 = \alpha_3 -(x_3^r-x_1^r)\alpha_2  
          + (x_3^r-x_1^r)(x_2^r-x_1^r)\alpha_1,
  && u_2 = \alpha_3 - ( x_3^r-x_2^r )\alpha_2,
  && u_3 = \alpha_3
\end{align*}
satisfies the orthogonality condition.
\end{Proposition}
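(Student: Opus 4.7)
The plan is a direct verification: we check that $[u_k, x_l] = 0$ for each of the six pairs with $k \neq l$, using only the formulas for $\alpha_i(x_j)$ from Example~\ref{ex:Ar} and the commutation rules coming from~\eqref{eq:universal}. Concretely, for any $\alpha \in L$ and $f, g \in S$, inside $U = \Diff\A_r$ we have $[\alpha, g] = \alpha_S(g)$ and $f$ commutes with $g$, so $[f\alpha, g] = f\,\alpha_S(g)$. This reduces each bracket $[u_k, x_l]$ to an $S$-linear combination of the values $\alpha_i(x_l)$, which are listed in Saito's matrix of Example~\ref{ex:Ar}.

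First, I would dispatch $u_3 = \alpha_3$, where $[\alpha_3, x_1] = \alpha_3(x_1) = 0$ and $[\alpha_3, x_2] = \alpha_3(x_2) = 0$ are immediate from the triangular shape of Saito's matrix. Next, for $u_2 = \alpha_3 - (x_3^r - x_2^r)\alpha_2$, the identity $[u_2, x_1] = 0$ follows again because both $\alpha_2(x_1)$ and $\alpha_3(x_1)$ vanish, and $[u_2, x_3]$ cancels on the nose since $\alpha_3(x_3) = x_3(x_3^r - x_1^r)(x_3^r - x_2^r)$ equals $(x_3^r - x_2^r)\alpha_2(x_3)$.

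The only case requiring a small calculation is $[u_1, x_3]$. Expanding,
\[
  [u_1, x_3] = \alpha_3(x_3) - (x_3^r - x_1^r)\alpha_2(x_3) + (x_3^r - x_1^r)(x_2^r - x_1^r)\alpha_1(x_3),
\]
which after substituting $\alpha_1(x_3) = x_3$, $\alpha_2(x_3) = x_3(x_3^r - x_1^r)$, and $\alpha_3(x_3) = x_3(x_3^r - x_1^r)(x_3^r - x_2^r)$ factors as $x_3(x_3^r - x_1^r)$ times
\[
  (x_3^r - x_2^r) - (x_3^r - x_1^r) + (x_2^r - x_1^r) = 0.
\]
The remaining $[u_1, x_2] = 0$ is even easier: $\alpha_3(x_2) = 0$ while $(x_3^r - x_1^r)\alpha_2(x_2) = (x_3^r - x_1^r)\, x_2(x_2^r - x_1^r)$ cancels with the contribution from the third summand.

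The main obstacle is purely bookkeeping: making sure that each $u_k$ is written in the form $\alpha_3 + \sum_{i=1}^{2} f_k^i \alpha_i$ required by Definition~\ref{def:ort} (which it is, by inspection), and that the polynomial identity underlying $[u_1, x_3] = 0$ is highlighted. No deeper tool is needed beyond the explicit Saito matrix of~$\A_r$.
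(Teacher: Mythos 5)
Your proof is correct and follows essentially the same route as the paper: a direct verification that $[u_k,x_l]=0$ for $k\neq l$, with the cases $l<k$ disposed of by the triangular shape of Saito's matrix and the nontrivial brackets reduced to polynomial identities in the entries $\alpha_i(x_l)$. Your factorization of $[u_1,x_3]$ as $x_3(x_3^r-x_1^r)\bigl((x_3^r-x_2^r)-(x_3^r-x_1^r)+(x_2^r-x_1^r)\bigr)=0$ is in fact a cleaner presentation of the computation than the one displayed in the paper.
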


\begin{proof}
The condition $[u_k,x_l]=0$ if $k,l\in\nset{3}$ and $l\neq k$
holds true whenever
$l<k$, so we suppose that $l>k$. If $k=3$ there is nothing to see; the case
$k=2$ amounts to the verification that
\begin{align*}
  [u_2,x_3]  
  &
  =\alpha_3(x_3) - (x_3^r-x_2^r)\alpha_2(x_3)
  \\
  &= x_3(x_3^r-x_2^r)(x_3^r-x_1^r)- ( x_3^r-x_2^r ) x_3(x_3^r-x_1^r)
  = 0
\end{align*}
and for or $k=1$ we have
\begin{align*}
  [u_1,x_2] 
  &
  = \alpha_3(x_2) -(x_3^r-x_1^r)\alpha_2(x_2)  
    + (x_3^r-x_1^r)(x_2^r-x_1^r)\alpha_1(x_2)
  \\
  &
  = -(x_3^r-x_1^r)x_2(x_2^r-x_1^r)  
    + (x_3^r-x_1^r)(x_2^r-x_1^r)x_2
  = 0
\shortintertext{and}
  [u_1,x_3] 
  &
  = \alpha_3(x_3) -(x_3^r-x_1^r)\alpha_2(x_3)  
    + (x_3^r-x_1^r)(x_2^r-x_1^r)\alpha_1(x_3)
  \\
  & 
  = x_3(x_3^r-x_2^r)(x_3^r-x_2^r)
    -(x_3^r-x_1^r)x_3(x_3^r-x_1^r)  
    + (x_3^r-x_1^r)(x_2^r-x_1^r)x_3
  = 0.
\end{align*}
\end{proof}

\subsection{Cohomology}
Given an associative algebra $A$ the (associative) enveloping algebra $A^e$ is
the vector space $A\otimes A$ endowed with the product~$\cdot$ defined by
$\left( a_1\otimes a_2 \right)\cdot \left( b_1\otimes b_2  \right)= a_1b_1\otimes b_2a_2$, so that the
category of left $A^e$-modules is equivalent to that of $A$-bimodules. The
\emph{Hochschild cohomology} of $A$ with values on an $A^e$-module $M$ is
\[
  H^\*(A,M)\coloneqq \Ext_{A^e}^\*(A,M).
\]
When $M=A$ we write $\HH^\*(A)\coloneqq H^\*(A,M)$. C.\,Weibel's
book~\cite{weibel} may serve as general reference on this subject.

\begin{Definition}[\cite{rinehart}]\label{def:cohom:l-r}
Let $(S,L)$ be a Lie--Rinehart algebra with enveloping algebra $U$ and let $N$
be an $U$-module. The \emph{Lie--Rinehart cohomology of~$(S,L)$ with values on
$N$} is
\[
  H_S^\*(L,N)\coloneqq\Ext^\*_{U}(S,N).
\]
\end{Definition}

\begin{Remark}[\cite{rinehart}]\label{rk:ch-ei}
In the setting of Definition~\ref{def:cohom:l-r} above, suppose that $L$ is
$S$-projective and let $\Lambda_S^\*L$ denote the exterior algebra of $L$ over
$S$. The complex $\Hom_S(\Lambda_S^\bullet L,N)$ with Chevalley--Eilenberg
differentials computes~$H_S^\*(L,N)$. 
\end{Remark}


\begin{Theorem}[\cite{kola}]\label{thm:spectral}
Let $(S,L)$ be a Lie--Rinehart algebra with enveloping algebra~$U$, and suppose
that $L$ is an $S$-projective module.
There exist a $U$-module structure on
$H^\*(S,U)$ and a first-quadrant spectral sequence $E_\*$ converging to
$\HH^\bullet(U)$ with second page
\[
  E_2^{p,q}
	= H_S^p(L,H^q (S,U)).
\]
\end{Theorem}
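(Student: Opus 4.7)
The plan is to recognize this spectral sequence as the Grothendieck spectral sequence for the composition of two functors on the category of $U$-bimodules. The key observation is that the outer functor $\Hom_{U^e}(U,-)$ factors as $\Hom_U(S,-)\circ \Hom_{S^e}(S,-)$.

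To make this precise, for any $U$-bimodule $M$ the $S$-centralizer
\[
  M^S := \Hom_{S^e}(S, M) = \{m\in M : sm = ms \text{ for all } s\in S\}
\]
carries a natural $U$-module structure, with $s\in S$ acting by (left, equivalently right) multiplication and $\alpha\in L$ acting by the adjoint $\alpha\cdot m := \alpha m - m\alpha$. A direct calculation using the defining relations~\eqref{eq:universal} shows that $[\alpha,m]$ stays in $M^S$ and that the two actions satisfy the Lie--Rinehart compatibility. Since $S$ and $L$ generate $U$, an element of $M$ is $U$-central precisely when it is both $S$-central and annihilated by the adjoint $L$-action, whence
\[
  \Hom_U(S, M^S) = \Hom_{U^e}(U, M).
\]
This identity is functorial in $M$ and furnishes the sought factorization.

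Next, I would invoke the Grothendieck spectral sequence for a composition of right-derived functors. The hypothesis to verify is that $\Hom_{S^e}(S, -)$ carries injective $U^e$-modules to $\Hom_U(S,-)$-acyclic $U$-modules. Using the $S$-projectivity of $L$, one can compute the derived functors $R^p\Hom_U(S,-) = H_S^p(L,-)$ via the Chevalley--Eilenberg complex $\Hom_S(\Lambda_S^\bullet L,-)$ of Remark~\ref{rk:ch-ei}. The acyclicity then reduces to checking that for an injective $U^e$-module $I$, the $U$-module $I^S$ has vanishing higher Lie--Rinehart cohomology; this can be obtained by exploiting that injectives over $U^e$ remain injective under restriction along the embedding $U\hookrightarrow U^e$, $u\mapsto u\otimes 1$. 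Once this is settled, the spectral sequence takes the shape
\[
  E_2^{p,q} = H_S^p(L, H^q(S, M)) \Longrightarrow \Ext^{p+q}_{U^e}(U, M),
\]
first-quadrant since both constituent derived functors vanish in negative degrees. Specializing to $M = U$ yields the theorem.

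The main obstacle is the acyclicity check. If the homological argument above proves awkward, a more direct alternative is available via a bicomplex. The $S$-projectivity of $L$ grants a Koszul--Rinehart resolution of $U$ as a $U^e$-module of the form $U\otimes_S\Lambda_S^\bullet L\otimes_S U$; tensoring with the bar resolution of $S$ over $S^e$ produces a bicomplex whose total cohomology is $\HH^\*(U)$, and whose column filtration yields, after one page, the Hochschild cohomology $H^\*(S,U)$ with vertical Chevalley--Eilenberg differential, so that the $E_2$ page is exactly $H_S^p(L, H^q(S,U))$ with the convergence as claimed.
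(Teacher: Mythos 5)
The paper does not actually prove this theorem: it is quoted from~\cite{kola}, so there is no internal proof to compare against, and your argument has to stand on its own. Its functorial skeleton is sound: $M^S=\Hom_{S^e}(S,M)$ is indeed an $(S,L)$-module under multiplication and the commutator action (the check only uses the relations~\eqref{eq:universal}), and $\Hom_U(S,M^S)=\Hom_{U^e}(U,M)$ because $S$ and $L$ generate $U$. Two points, however, are not covered by what you wrote. First, identifying the derived functors of the inner functor with $H^q(S,-)$ requires injective $U^e$-modules to restrict to $\Hom_{S^e}(S,-)$-acyclic $S^e$-modules; this holds because, $L$ being $S$-projective, Rinehart's PBW theorem makes $U$ projective as a left and as a right $S$-module, hence $U^e$ projective over $S^e$ --- this is precisely where the hypothesis on $L$ enters and it should be said.

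Second, and more seriously, your acyclicity argument fails as stated: restricting an injective $U^e$-module $I$ along $u\mapsto u\otimes 1$ does give an injective $U$-module, but for the wrong module structure. The module whose higher Lie--Rinehart cohomology must vanish is $I^S$ with the commutator action of $L$, not $I$ with the one-sided action, and injectivity of the latter says nothing about $\Ext_U^{p}(S,I^S)$ for $p>0$. A correct route is to rewrite $(-)^S\cong\Hom_{U^e}(U\otimes_S U,-)$ via the extension/restriction adjunction along $S^e\to U^e$ and to compute $\Ext_U^\bullet(S,I^S)$ with the Rinehart resolution $U\otimes_S\Lambda_S^\bullet L\to S$; the resulting complex is $\Hom_{U^e}(U\otimes_S\Lambda_S^\bullet L\otimes_S U, I)$, which is exact in positive degrees precisely because $U\otimes_S\Lambda_S^\bullet L\otimes_S U\to U$ is a resolution of $U$ by projective $U^e$-modules and $\Hom_{U^e}(-,I)$ is exact. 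This is exactly the object your fallback bicomplex also relies on, so on either route the substantive work is the same and is only asserted in your sketch: you must prove that this complex is a $U^e$-projective resolution (both the projectivity of its terms and its exactness use the PBW filtration, hence again the $S$-projectivity of $L$), and you must verify that the $U$-module structure your construction induces on $H^q(S,U)$ agrees with the one the theorem refers to, which in~\cite{kola} is defined through the liftings $\nabla_\alpha$ recalled in Subsection~\ref{subsec:H1SU:actionofL}; that agreement is part of the statement and not automatic.
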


\begin{Proposition}\label{prop:Diff:HH}
There are isomorphisms 
$\HH^\*(\Diff\B_{n+1})\cong\HH^\*(\Diff\tilde\B_n)$
for any $n\geq1$.
\end{Proposition}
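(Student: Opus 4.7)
The plan is to combine the algebra isomorphism $\Diff\B_{n+1} \cong A_1 \otimes \Diff\tilde\B_n$ provided by Proposition~\ref{prop:braidshh} with a Künneth-type formula for Hochschild cohomology and the classical fact that $\HH^\*(A_1)$ is concentrated in degree~$0$ with value~$\kk$. With these two ingredients in hand the conclusion is immediate: setting $B=\Diff\tilde\B_n$, one gets
\[
  \HH^\*(\Diff\B_{n+1})
  \cong \HH^\*(A_1\otimes B)
  \cong \HH^\*(A_1)\otimes \HH^\*(B)
  \cong \HH^\*(\Diff\tilde\B_n).
\]

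First I would establish the Künneth isomorphism $\HH^\*(A_1\otimes B)\cong\HH^\*(A_1)\otimes\HH^\*(B)$. The point is that $(A_1\otimes B)^e = A_1^e\otimes B^e$, and if $P_\bullet\to A_1$ and $Q_\bullet\to B$ are projective bimodule resolutions then $P_\bullet\otimes Q_\bullet\to A_1\otimes B$ is a projective resolution over $(A_1\otimes B)^e$. One then has a canonical isomorphism of cochain complexes
\[
  \Hom_{(A_1\otimes B)^e}(P_\bullet\otimes Q_\bullet,A_1\otimes B)
  \cong
  \Hom_{A_1^e}(P_\bullet,A_1)\otimes\Hom_{B^e}(Q_\bullet,B),
\]
and the algebraic Künneth theorem over the field $\kk$ delivers the desired splitting in cohomology. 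Because $\HH^\*(A_1)$ will turn out to be one-dimensional and concentrated in degree~$0$, no Tor correction intervenes and the resulting isomorphism is unambiguous.

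Second I would record the computation $\HH^0(A_1)=\kk$ and $\HH^i(A_1)=0$ for $i\geq 1$. The degree~$0$ part is just the center of $A_1$. For the vanishing in higher degrees, $A_1$ admits the Koszul-type bimodule resolution
\[
  0\to A_1\otimes A_1\to (A_1\otimes A_1)^{2}\to A_1\otimes A_1\to A_1\to 0
\]
of length~$2$, which immediately kills $\HH^i(A_1)$ for $i>2$. The vanishing in degrees $1$ and $2$ can be obtained either as a short explicit calculation with this resolution, or by invoking Dixmier's theorem that every derivation of $A_1$ is inner together with the rigidity of the Weyl algebra under formal deformation.

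The main technical obstacle is really just the Künneth isomorphism for Hochschild cohomology: once it is available, the remaining input on $A_1$ is entirely classical and the claim follows in one line. Since we work over a field and one factor has finite-dimensional cohomology concentrated in a single degree, this Künneth formula holds in its strongest form, so in practice the work reduces to writing down the tensor product of the bimodule resolutions and identifying the two $\Hom$-complexes above.
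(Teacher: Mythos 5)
Your proposal is correct and follows essentially the same route as the paper: the paper also combines the isomorphism $\Diff\B_{n+1}\cong A_1\otimes\Diff\tilde\B_n$ of Proposition~\ref{prop:braidshh} with the K\"unneth formula for Hochschild cohomology (cited from Cartan--Eilenberg rather than proved) and the fact that $\HH^\*(A_1)$ is $\kk$ in degree $0$ and vanishes in positive degrees. You merely supply more detail, namely a sketch of the K\"unneth isomorphism via tensoring finitely generated projective bimodule resolutions and of the computation of $\HH^\*(A_1)$, which the paper takes as known.
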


\begin{proof} This is a consequence of applying the K\"unneth's formula for
Hochschild cohomology as in H.\,Cartan and
S.\,Eilenberg's~\cite{cartan-eilenberg}*{XI.3.I} to the isomorphism  
$ \Diff \B_{n+1} \cong A_1 \otimes \Diff\tilde\B_n$ in
Proposition~\ref{prop:braidshh} and the observation that $\HH^0(A_1)\cong \kk$
and $\HH^i(A_1)\cong\kk$ if $i\neq 0$.
\end{proof}

\begin{Corollary}\label{coro:hhB3}
The Hilbert series of the Hochschild cohomology of $\Diff\B_3$ is 
\[
  h(t)=1 + 3t + 6t^2 + 4t^3.
\]
\end{Corollary}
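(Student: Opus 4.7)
The plan is to reduce to a computation for $\tilde\B_2$ via Proposition~\ref{prop:Diff:HH}, and then to apply the Lie--Rinehart spectral sequence of Theorem~\ref{thm:spectral} to pin down the total dimensions.

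First, I would invoke Proposition~\ref{prop:Diff:HH} to obtain $\HH^*(\Diff\B_3)\cong\HH^*(\Diff\tilde\B_2)$; it then suffices to show the right-hand side has Hilbert series $1+3t+6t^2+4t^3$. Setting $S=\kk[x_1,x_2]$, $L=\Der\tilde\B_2$ and $U=\Diff\tilde\B_2$, Theorem~\ref{thm:spectral} produces a spectral sequence
\[
  E_2^{p,q}=H^p_S(L,H^q(S,U))\Longrightarrow \HH^{p+q}(U).
\]
Because $L$ is $S$-free of rank $2$ (Example~\ref{ex:braids:tilde}), Remark~\ref{rk:ch-ei} forces $E_2^{p,q}=0$ whenever $p>2$; and because $S$ is a polynomial algebra in two variables, $E_2^{p,q}=0$ whenever $q>2$. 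Hence only the nine entries with $p,q\in\{0,1,2\}$ can contribute, the total cohomology vanishes above degree $4$, and on this $3\times 3$ page the differentials $d_r$ vanish for $r\geq 3$ on support grounds, so the spectral sequence degenerates at $E_3$.

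The next step would be to identify $H^q(S,U)$ together with its $(S,L)$-module structure for $q=0,1,2$, making use of the explicit presentation of $\Diff\tilde\B_2$ by generators $x$, $y$, $E$, $D$ recalled just before Theorem~\ref{thm:diff-LR}; a description of these modules for $\tilde\B_2$ was obtained in~\cite{kola}. Once these are known, I would run the Chevalley--Eilenberg complex $\Hom_S(\Lambda_S^\bullet L,H^q(S,U))$ for each $q$ to read off the columns of $E_2$, and finally compute the two families of $d_2$ differentials.

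The main obstacle is the explicit description of the modules $H^q(S,U)$ and the computation of the resulting $d_2$ differentials; in particular, the cancellations along the antidiagonal $p+q=4$ must be strong enough to kill the whole degree $4$ contribution. Once $E_\infty$ is determined, summing the dimensions along each antidiagonal $p+q=k$ will yield $(1,3,6,4)$ for $k=0,1,2,3$, which is the claimed Hilbert polynomial.
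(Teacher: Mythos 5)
Your first step coincides with the paper's: the reduction $\HH^\*(\Diff\B_3)\cong\HH^\*(\Diff\tilde\B_2)$ via Proposition~\ref{prop:Diff:HH} is exactly how the proof begins. But from that point on your argument is only a plan, and the plan has a genuine gap: you never produce the numbers. To conclude the Hilbert series $1+3t+6t^2+4t^3$ you would need the dimensions of $H^q(S,U)$ for $q=0,1,2$ as $(S,L)$-modules, the dimensions of the nine entries $E_2^{p,q}=H^p_S(L,H^q(S,U))$, and the ranks of the $d_2$ differentials --- none of which is computed or even estimated in your proposal; you explicitly defer them as ``the main obstacle''. Without that input the statement is not proved. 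The paper closes precisely this gap by citing \cite{kola}*{Corollary 5.8}, where the full computation of $\HH^\*(\Diff\tilde\B_2)$ (the case of three concurrent lines in the plane) was already carried out; either quote that result, as the paper does, or actually perform the computation you outline.

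There is also a structural error in the way you propose to handle total degree $4$. You expect ``cancellations along the antidiagonal $p+q=4$'' coming from the differentials, but on the $3\times3$ support the only entry in total degree $4$ is $E_2^{2,2}$, and no differential can touch it: $d_2\colon E_2^{0,3}\to E_2^{2,2}$ has zero source (since $H^3(S,U)=0$) and $d_2\colon E_2^{2,2}\to E_2^{4,1}$ has zero target, so $E_\infty^{2,2}=E_2^{2,2}$. Hence the vanishing of $\HH^4$ cannot be obtained by degeneration arguments; it requires showing $H^2_S(L,H^2(S,U))=0$ outright, which is again part of the computation you have not done (and which is contained in the result of \cite{kola} that the paper invokes).
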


\begin{proof} Proposition~\ref{prop:Diff:HH} particularizes to
$\HH^\*(\Diff\B_3)\cong\HH^\*(\Diff\tilde\B_2)$, and then
\cite{kola}*{Corollary 5.8}  reads
\(
  h_{\HH^*(\Diff\B_3)} = h_{\HH^*(\Diff\tilde\B_2)} = 1 + 3t + 6t^2 + 4t^3.
\)
\end{proof}

\section{Combinatorics of the Koszul complex}

We let $n\geq1$ and assume throughout
this section that $(S,L)$ is a Lie--Rinehart algebra with
$S=\kk[x_1,\ldots,x_n]$ and 
$L$ a free $S$-module with basis $\left( \alpha_1,\ldots,\alpha_n
\right)$. Let $U=U(S,L)$ be its Lie--Rinehart enveloping algebra.
To compute the Hochschild cohomology of $S$ we use the Koszul resolution of
$S$ available in~\cite{weibel}*{\S 4.5}.

\begin{Lemma}\label{lem:koszul}
Let $W$ be the subspace of $S$ with basis $(x_1,\ldots,x_n)$.
The complex $P_\*=S^e\otimes\Lambda^\* W$
with differentials $b_\* :P_\*\to P_{\*-1}$
defined for $s, t\in S$, $q\in\nset{n}$ and $1\leq i_1<\dots<i_{q}\leq n$ by
\begin{align*}
  &b_q(s|t\otimes x_{i_1}\wedge \dots\wedge x_{i_q})
  = \sum_{j=1}^q(-1)^{j+1}
  [(sx_{i_j}|t) -(s|x_{i_j}t) ]\otimes x_{i_1}\wedge\dots\wedge\check
  x_{i_j}\wedge\dots\wedge x_{i_q}
\end{align*}
and augmentation $\varepsilon : S^e\to S $ given by $\varepsilon(s|t) = st$ is
a resolution of $S$ by free $S^e$-modules. The notation is the usual one: the
symbol $|$ denotes the tensor product inside $S^e$ and $\check x_{i_j}$ means
that $x_{i_j}$ is omitted.
\end{Lemma}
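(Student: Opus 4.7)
The plan is to recognize $P_\bullet$ as the classical Koszul complex associated to a particular regular sequence in $S^e$ and then invoke the standard exactness result for Koszul complexes of regular sequences.

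First I would rewrite the differential in a more symmetric form. Since $S$ is commutative we have
\[
(sx_{i_j}\mid t) = (x_{i_j}\mid 1)\cdot(s\mid t),\qquad
(s\mid x_{i_j}t) = (1\mid x_{i_j})\cdot(s\mid t)
\]
for the $S^e$-module multiplication on the left factor. Setting $y_i \coloneqq (x_i\mid 1) - (1\mid x_i) \in S^e$, the formula for $b_q$ becomes the standard Koszul formula
\[
b_q\bigl(\xi\otimes x_{i_1}\wedge\cdots\wedge x_{i_q}\bigr)
= \sum_{j=1}^q (-1)^{j+1}\, y_{i_j}\,\xi \otimes x_{i_1}\wedge\cdots\wedge\check x_{i_j}\wedge\cdots\wedge x_{i_q}.
\]
In particular $P_\bullet$ is (up to reindexing) the Koszul complex $K_\bullet(S^e; y_1,\ldots,y_n)$, and each $P_q = S^e\otimes \Lambda^q W$ is evidently free of rank $\binom{n}{q}$ over $S^e$. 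The identity $b_{q-1}\circ b_q = 0$ is then automatic from the general Koszul formalism, but can also be checked directly by expanding and noting the usual antisymmetric cancellation.

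Next I would verify that $(y_1,\ldots,y_n)$ is a regular sequence in $S^e$. Identifying $S^e = S\otimes S$ with the polynomial ring $\kk[x_1,\ldots,x_n,x_1',\ldots,x_n']$ via $(s\mid t)\mapsto s(x)\,t(x')$, the element $y_i$ corresponds to $x_i - x_i'$. Successively quotienting by $x_1-x_1', x_2-x_2', \ldots, x_n-x_n'$ gives at each step a polynomial ring in one fewer variable, so each quotient is a domain and each $x_i-x_i'$ is a non-zerodivisor on it; hence the sequence is regular.

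Finally I would conclude using the classical theorem (see Weibel~\cite{weibel}*{Cor.\ 4.5.5}) that the Koszul complex on a regular sequence in a commutative ring is a free resolution of the quotient by the ideal generated by that sequence. Here $S^e/(y_1,\ldots,y_n)\cong S$ via the multiplication map $\varepsilon(s\mid t)=st$, which matches the stated augmentation since $\varepsilon(y_i)=x_i-x_i=0$. Thus $P_\bullet\to S$ is a free resolution of $S$ as an $S^e$-module, as required.

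The only real obstacle is bookkeeping: matching the sign convention and the handedness of the $S^e$-action in the stated differential to the standard Koszul differential, so that the identification of $b_q$ with the Koszul map on generators $y_i$ is genuinely term-by-term correct; once that is in place the result is immediate from the regularity of $(x_i - x_i')_{i=1}^n$ and the standard theorem.
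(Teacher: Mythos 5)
Your proof is correct and is essentially the argument the paper relies on: the paper gives no proof of this lemma, simply citing the Koszul resolution of \cite{weibel}*{\S 4.5}, and your identification of the differential with the Koszul differential on the regular sequence $y_i = x_i\otimes 1 - 1\otimes x_i$ in $S^e\cong\kk[x_1,\ldots,x_n,x_1',\ldots,x_n']$, together with the isomorphism $S^e/(y_1,\ldots,y_n)\cong S$ induced by multiplication, is exactly the standard justification behind that citation.
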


Through a classical
adjunction, the complex $\Hom_{S^e}(P_\*,U)$ is isomorphic to 
\[\label{eq:koszul-adjunction}
  \Hom(\Lambda^\*W,U)
  \cong 
  U\otimes \Hom(\Lambda^\*W,\kk)
  \eqqcolon
  \X^\*.
\] 
We compute the Hochschild cohomology $H^\bullet(S,U)$ from the complex 
$(\X^\*,d^\*)$. For each $q$ in $\nset{0,n}$ the basis 
\(
  \{ \xx_{k_1}\wedge\ldots\wedge\xx_{k_q} : 1\leq k_1<\ldots<k_q\leq n\} 
\)
of  $\Hom(\Lambda^qW,\kk)$
dual to the basis $\{x_{k_1}\wedge\ldots\wedge x_{k_q}\}$ of
$\Lambda^qW$ 
induces a basis of  $\X^q$ as a $U$-module.


Write $\alpha^I\coloneqq \alpha_{n}^{i_{n}}\ldots\alpha_{1}^{i_1}$ for each
$n$-tuple of nonnegative integers $I=(i_{n},\dots,i_{1})$, and call $\abs{I} =
i_{n}+\ldots+i_{1}$ the \emph{order} of $I$.  A result \textit{à la}
Poincaré-Birkhoff-Witt in~\cite{rinehart}*{\S3} assures that the set 
\[\label{eq:pbw}
  \left\{ \alpha^I : I\in\NN^n\right\}
\]
is an $S$-basis of $U$. Moreover, $U$ is a
filtered algebra, with filtration $(F_pU:p\geq0)$ given by \emph{the order of
differential operators}: 
\(
  F_pU
  = \langle 
      f\alpha^I
      : f\in S, \abs I\leq p 
    \rangle
\)
for each~$p\geq0$.

\begin{Proposition}\label{prop:X-PBW}
Let $q\in\{0,\ldots,n\}$.
\begin{thmlist}
\item The set formed by
\(
    \alpha^I\xx_{k_1}\wedge\cdots\wedge\xx_{k_q}
\)
with  $I\in\NN^n$ and $1\leq k_1<\ldots<k_q\leq n$
is an $S$-basis of $\X^q$.

\item There is a filtration
$(F_p\X^q:p\geq0)$ of vector spaces
on $\X^q$ determined for each $p\geq0$ by
\[\label{eq:filtU}
  F_p\X^q
  = \langle 
      f\alpha^I\xx_{k_1}\wedge\cdots\wedge\xx_{k_q} 
      : f\in S, 1\leq k_1<\ldots<k_q\leq n, 
      I\in\NN^n \text{ such that } \abs I\leq p 
    \rangle.
\]
\end{thmlist}
\end{Proposition}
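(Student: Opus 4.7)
The plan is to deduce both parts directly from the Poincaré–Birkhoff–Witt description \eqref{eq:pbw} of $U$, together with the explicit presentation of $\X^q = U\otimes\Hom(\Lambda^q W,\kk)$ as a tensor product. Since $\Lambda^q W$ is finite-dimensional with basis $\{x_{k_1}\wedge\cdots\wedge x_{k_q} : 1\leq k_1<\cdots<k_q\leq n\}$, the dual family $\{\xx_{k_1}\wedge\cdots\wedge\xx_{k_q}\}$ is a $\kk$-basis of $\Hom(\Lambda^q W,\kk)$. Viewing $U$ as a left $S$-module via $i\colon S\to U$, statement \eqref{eq:pbw} presents $U$ as the free $S$-module $\bigoplus_{I\in\NN^n}S\cdot\alpha^I$.

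For part \thmitem{1} I would combine these two decompositions to obtain
\[
  \X^q
  = U\otimes \Hom(\Lambda^q W,\kk)
  = \bigoplus_{\substack{I\in\NN^n \\ 1\leq k_1<\cdots<k_q\leq n}}
    S\cdot \alpha^I\xx_{k_1}\wedge\cdots\wedge\xx_{k_q},
\]
which gives simultaneously the spanning and linear independence of the stated family.

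For part \thmitem{2}, by part \thmitem{1} each element of $\X^q$ has a unique expression $\sum f_{I,K}\,\alpha^I\xx_{k_1}\wedge\cdots\wedge\xx_{k_q}$ with $f_{I,K}\in S$, so the subspace $F_p\X^q$ is unambiguously the $S$-span of those basis elements with $\abs I\leq p$; equivalently, $F_p\X^q = F_pU\otimes\Hom(\Lambda^q W,\kk)$ where $F_pU$ is the order filtration of $U$ (these agree by PBW, as $F_pU$ is precisely the $S$-span of $\{\alpha^I:\abs I\leq p\}$). The inclusion $F_p\X^q\subseteq F_{p+1}\X^q$ is then immediate, and $\bigcup_p F_p\X^q=\X^q$ follows from part \thmitem{1}, since each basis element $\alpha^I\xx_{k_1}\wedge\cdots\wedge\xx_{k_q}$ lies in $F_{\abs I}\X^q$.

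This proposition is essentially bookkeeping built on top of Rinehart's PBW theorem, so I do not expect any real obstacle; the only point meriting attention is the identification of the order filtration on $U$ with the PBW-degree filtration, but this is already packaged into \eqref{eq:pbw} and the definition of $F_pU$ recalled just above the statement.
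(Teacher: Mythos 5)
Your argument is correct and follows the paper's own proof: both combine the identification $\X^q \cong U\otimes\Hom(\Lambda^q W,\kk)$ with the dual basis of $\Hom(\Lambda^q W,\kk)$ and Rinehart's PBW $S$-basis~\eqref{eq:pbw} of $U$, the filtration in part \thmitem{2} then being routine bookkeeping. No gap to report.
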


\begin{proof}
In view of \eqref{eq:koszul-adjunction}, 
for each $q$ the $U$-module $\X^q$ admits
$
  \left\{ 
    \xx_{k_1}\wedge\cdots\wedge\xx_{k_q} 
    : 1\leq k_1<\ldots<k_q\leq n
  \right\}
$
as a basis.
The claim follows from this and the $S$-basis of $U$ in~\eqref{eq:pbw} above.
\end{proof}

The differentials
$d^q:\X^q\to\X^{q+1}$ induced by $b_\* :P_\*\to P_{\*-1}$
satisfy for $q=0,1$ 
\begin{align*}
  &
  d^0 : u \mapsto \sum_{k=1}^n[u,x_k]\xx_k,
  &&
  d^1 :
  \sum_{k=1}^nu_k\xx_k
  \mapsto  
  \sum_{1\leq k<l\leq n}  \left( [u_k,x_l]-[u_l,x_k] \right)\xx_k\wedge\xx_l.
\end{align*}
Given $m\in\nset{n}$ we denote by $e_m$ the $n$-tuple whose components are
all zero except for the~$(n-m)$th, where there is a~$1$.

\begin{Lemma}\label{lem:axkalphai}
Let $a = \sum_{\abs{I}=p}f^I\alpha^I$ for $f^I\in S$
with $I\in\NN^n$.
If $k\in\nset{n}$ and $J=(j_{n},\ldots,j_{1})\in\NN^n$ has order $p-1$
then the component of $[a,x_k]$ in $\alpha^J$ is 
\[
  \sum_{m=1}^{n} (j_m+1)\alpha_m(x_k)f^{J+e_m}.
\]
\end{Lemma}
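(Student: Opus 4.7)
The plan is to compute $[a,x_k]$ by working in the filtered algebra $U$ and extracting the top-order part.

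First I would observe that since $f^I\in S$ commutes with $x_k\in S$ inside $U$, the bracket distributes as
\[
  [a,x_k] = \sum_{|I|=p} f^I\,[\alpha^I,x_k],
\]
so the computation reduces to understanding $[\alpha^I,x_k]$ for a single multi-index $I$ with $|I|=p$. Since $a\in F_pU$ and the bracket with an element of $S=F_0U$ lowers filtration by one (because $[\alpha_m,x_k]=\alpha_m(x_k)\in S$ by the defining relation \eqref{eq:universal} of the enveloping algebra), we have $[a,x_k]\in F_{p-1}U$, and the component on any $\alpha^J$ with $|J|=p-1$ is well-defined modulo terms of strictly lower order. Therefore it suffices to compute $[\alpha^I,x_k]$ modulo~$F_{p-2}U$.

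Next, I would use that $\mathrm{ad}_{x_k}=[-,x_k]$ is a derivation on~$U$. Applied to $\alpha^I=\alpha_n^{i_n}\cdots\alpha_1^{i_1}$ this gives
\[
  [\alpha^I,x_k]
  = \sum_{m=1}^n \alpha_n^{i_n}\cdots\alpha_{m+1}^{i_{m+1}}\,
    [\alpha_m^{i_m},x_k]\,\alpha_{m-1}^{i_{m-1}}\cdots\alpha_1^{i_1}.
\]
Using the same derivation property one more time on $\alpha_m^{i_m}$, together with $[\alpha_m,x_k]=\alpha_m(x_k)\in S$, one gets $[\alpha_m^{i_m},x_k]\equiv i_m\,\alpha_m(x_k)\,\alpha_m^{i_m-1}$ modulo $F_{i_m-2}$: each of the $i_m$ contributions is the same up to a further commutator which drops filtration. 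Plugging back in and using that $\alpha_m(x_k)$, being in $S=F_0U$, may be moved freely past the $\alpha_j$'s modulo $F_{p-2}U$, I obtain
\[
  [\alpha^I,x_k]\equiv\sum_{m=1}^n i_m\,\alpha_m(x_k)\,\alpha^{I-e_m}
  \pmod{F_{p-2}U}.
\]

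The final step is a reindexing. Summing over $I$ with $|I|=p$ and substituting $J=I-e_m$, so that $I=J+e_m$ with $|J|=p-1$ and $i_m=j_m+1$, yields
\[
  [a,x_k]\equiv\sum_{|J|=p-1}\left(\sum_{m=1}^n(j_m+1)\,\alpha_m(x_k)\,f^{J+e_m}\right)\alpha^J
  \pmod{F_{p-2}U},
\]
after again using that the $S$-elements $\alpha_m(x_k)$ and $f^{J+e_m}$ commute. Reading off the coefficient of $\alpha^J$ from the $S$-basis of Proposition~\ref{prop:X-PBW} gives the claimed formula. The only real care point is bookkeeping of filtration degrees when moving $\alpha_m(x_k)\in S$ past the $\alpha_j$'s and when counting the $i_m$ equal contributions inside $[\alpha_m^{i_m},x_k]$; once one accepts that equalities modulo $F_{p-2}U$ suffice to determine the $\alpha^J$-components, the rest is bookkeeping.
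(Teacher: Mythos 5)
Your proposal is correct and follows essentially the same route as the paper: expand the bracket using the derivation property of $[-,x_k]$ together with $[\alpha_m,x_k]=\alpha_m(x_k)\in S$, observe that all the rearrangement errors (moving $S$-elements past the $\alpha_j$'s and the repeated terms inside $[\alpha_m^{i_m},x_k]$) lie in $F_{p-2}U$, and then reindex $I=J+e_m$, noting that by the PBW basis the terms of order $\leq p-2$ cannot affect the $\alpha^J$-components with $\abs{J}=p-1$. The paper merely compresses the filtration bookkeeping into a single displayed congruence, which you spell out in more detail.
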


\begin{proof}
If $I=(i_{n},\ldots,i_{1})\in\NN^n$ has order $p$ then
\begin{align*}
  [f^I\alpha^I,x_k]
  &\equiv
  i_{n}f^I\alpha_{n}(x_k)\alpha^{I-e_{n}}
  + \ldots
  + i_{1}f^{I}\alpha_{1}(x_k)\alpha^{I-e_{1}}
  \mod F_{p-2}U.
\end{align*}
If there exists a monomial in this expression belonging to $S\alpha^J$ then
there exists $m\in\nset{n}$ such that $I-e_m=J$. This happens when the 
component of $[f^I\alpha^I,x_k]$ in $\alpha^J$ is 
\[
  i_m\alpha_m(x_k)f^{I} 
  = (j_m+1)\alpha_m(x_k)f^{J+e_m},
\]
and therefore
\(
  [a,x_k]
  \equiv
  \sum_{\abs J = p-1}
    \sum_{m=1}^{n} (j_m+1)\alpha_m(x_k)f^{J+e_m}
  \alpha^J
\)
modulo $F_{p-2}U$.
\end{proof}

For $g^1,\ldots,g^n\in S $ and
$f^1=(f^1_1,\ldots,f^1_n),\ldots,f^n=(f^n_1,\ldots,f^n_n)\in S^{\times n }$ we
let
\begin{align}\label{eq:OmegaF1H0}
  &
  \Omega^0(g^{n},\ldots,g^{1})
  \coloneqq \sum_{i=1}^{n}g^i\alpha_i \in F_1U,
  &&
  \Omega^1(f^{n},\ldots,f^{1})
  \coloneqq \sum_{l=1}^n\sum_{i=1}^{n}f_k^i\alpha_i\xx_k \in F_1\X^1.
\end{align}

\begin{Proposition}\label{prop:differentials}
Let $p\geq0$, $u\in F_pU$ and $\omega\in F_p\X^1$. 
\begin{thmlist}
\item\label{prop:differentials0}
If $\{f^I : I\in \NN^n, \abs I = p\}\subset S $ is such that
$
  u
  \equiv \sum_{\abs{I} =p}
    f^I\alpha^I
  \mod F_{p-1}U
$~then
\[
  d^0(u)
  \equiv \sum_{\abs{J}=p-1} 
    d^0 \left( \Omega^0\left( 
    (j_{n}+1)f^{J+e_{n}},(j_{n-1}+1)f^{J+e_{n-1}},\ldots,(j_{1}+1)f^{J+e_{1}} 
    \right)  \right)  \alpha^J
\]
modulo $F_{p-2}\X^1$.
\item\label{prop:differentials1}
If 
$
  \omega 
  \equiv  \sum_{l=1}^n\sum_{\abs{I} = p}
    f_l^I\alpha^I \xx_i
  \mod F_{p-1}\X^1
$
for
$\left\{f^I_l : I\in\NN^n , \abs I = p, l\in\nset{n}\right\}\subset S $
then
\[
  d^1(\omega)
  \equiv \sum_{\abs{J}=p-1} 
    d^1 \left( \Omega^1\left( 
      (j_{n}+1)f^{J+e_{n}},(j_{n-1}+1)f^{J+e_{n-1}},\ldots,(j_{1}+1)f^{J+e_{1}} 
    \right)  \right)  \alpha^J  
\]
modulo $F_{p-2}\X^2$.
\end{thmlist}
\end{Proposition}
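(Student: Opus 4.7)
The plan is to expand $d^0(u)$ and $d^1(\omega)$ via the explicit formulas for $d^0$ and $d^1$ recalled just after Proposition~\ref{prop:X-PBW}, and then apply Lemma~\ref{lem:axkalphai} termwise to each commutator that appears, in order to extract its top-order component in the PBW basis. The reindexing $I = J+e_m$, which turns $i_m$ into $j_m+1$, then identifies the leading part with the asserted expressions involving $\Omega^0$ and $\Omega^1$.

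For part~\thmitem{1} I would first split $u = \sum_{\abs I = p} f^I\alpha^I + v$ with $v\in F_{p-1}U$. A direct check gives $[F_{p-1}U,x_k]\subset F_{p-2}U$, since each bracket $[\alpha_i,x_k] = \alpha_i(x_k)$ lies in $S = F_0U$; hence $d^0(v) = \sum_k[v,x_k]\xx_k\in F_{p-2}\X^1$, and modulo $F_{p-2}\X^1$ one has $d^0(u)\equiv \sum_k \sum_{\abs I = p} [f^I\alpha^I,x_k]\xx_k$. Applying Lemma~\ref{lem:axkalphai} to $a = \sum_{\abs I = p}f^I\alpha^I$ gives
\[
  [u,x_k]\equiv\sum_{\abs J = p-1}\sum_{m=1}^n(j_m+1)\,\alpha_m(x_k)\,f^{J+e_m}\,\alpha^J \mod F_{p-2}U.
\]
Summing against $\xx_k$ and swapping the outer $J$-sum to the front yields
\[
  d^0(u)\equiv\sum_{\abs J = p-1}\Bigl(\sum_{k=1}^n\sum_{m=1}^n(j_m+1)f^{J+e_m}\alpha_m(x_k)\xx_k\Bigr)\alpha^J \mod F_{p-2}\X^1,
\]
and the inner parenthesis is by definition $d^0\bigl(\Omega^0((j_n+1)f^{J+e_n},\ldots,(j_1+1)f^{J+e_1})\bigr)$ using the formula for $d^0$ on a degree-one element. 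The trailing ``$\cdot\,\alpha^J$'' is to be understood as the natural right $U$-action on $\X^1 = U\otimes\Hom(W,\kk)$; since $[S,\alpha^J]\subset F_{p-2}U$ whenever $\abs J = p-1$, whether $\alpha^J$ stands to the left or to the right of the $S$-valued coefficients is immaterial modulo $F_{p-2}\X^1$.

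Part~\thmitem{2} would proceed in complete analogy. Writing $\omega = \sum_k u_k\xx_k$ with $u_k = \sum_{\abs I = p}f_k^I\alpha^I + v_k$ and $v_k\in F_{p-1}U$, the contribution of the $v_k$'s to $d^1(\omega) = \sum_{k<l}([u_k,x_l]-[u_l,x_k])\xx_k\wedge\xx_l$ lies in $F_{p-2}\X^2$ by the same filtration argument. Applying Lemma~\ref{lem:axkalphai} to each $[u_k,x_l]$ and $[u_l,x_k]$ and reindexing as above, I expect to obtain
\[
  d^1(\omega)\equiv\sum_{\abs J = p-1}\sum_{k<l}\sum_{m=1}^n(j_m+1)\bigl(f_k^{J+e_m}\alpha_m(x_l)-f_l^{J+e_m}\alpha_m(x_k)\bigr)\alpha^J\xx_k\wedge\xx_l
\]
modulo $F_{p-2}\X^2$, and the inner sum over $k<l$ and $m$ is precisely $d^1\bigl(\Omega^1((j_n+1)f^{J+e_n},\ldots,(j_1+1)f^{J+e_1})\bigr)$ multiplied on the right by $\alpha^J$.

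The main obstacle is essentially bookkeeping: fixing a precise interpretation of the right multiplication by $\alpha^J$ on $\X^\*$, tracking that all lower-order error terms land inside $F_{p-2}$, and executing the reindexing $I = J+e_m$ that produces the factor $(j_m+1)$. Once these conventions are pinned down, the combinatorial content of the proposition is already contained in Lemma~\ref{lem:axkalphai}, and no further calculation is needed.
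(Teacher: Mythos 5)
Your proposal is correct and follows essentially the same route as the paper: both rest on Lemma~\ref{lem:axkalphai} together with the direct computation of $d^0(\Omega^0)$ (resp.\ $d^1(\Omega^1)$) and the reindexing $I=J+e_m$ producing the factors $(j_m+1)$. The paper merely organizes the same computation as a comparison of coefficients in the $S$-basis $\alpha^J\xx_k$ of $\X^1$ from Proposition~\ref{prop:X-PBW}, while you expand, estimate the error terms in the filtration, and reassemble; the content is identical.
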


\begin{proof}
To prove~\ref{prop:differentials0} it suffices to see that 
the desired equality holds in each coefficient of the $S$-basis
$\left( \alpha^J\xx_k : J\in\NN^n, k\in\nset{n} \right)$ 
of $\X^1$ given in Proposition~\ref{prop:X-PBW}.
Let then $J=(j_{n},\ldots,j_{1})\in\NN^n$ of order $p-1$ and $k\in\nset{n}$. 
Thanks to Lemma~\ref{lem:axkalphai} the component in
$\alpha^J\xx_k$
of $d^0(u)=\sum_{l=1}^n[u,x_l]\xx_l$~is 
\[\label{eq:differentials0-omegap}
  \sum_{m=1}^{n} (j_m+1)\alpha_m(x_k)f^{J+e_m}.
\]
On the other hand, given $f^{n},\ldots,f^{1}\in S$ a direct calculation shows
that the component in
$\xx_k$ of $d^0\left( \Omega^0\left(f^{n},\ldots,f^{1}\right) \right)$
is 
\(
  \sum_{i={1}}^{n}f^i\alpha_i(x_k).
\)
It follows that the component of 
\[
  d^0\left( 
    \Omega^0\left( 
      (j_{n} + 1)f^{J+e_{n}}, \ldots, (j_{1} +1)f^{J+e_{1}}
    \right) 
  \right) 
\]
in $\xx_k$ is equal to~\eqref{eq:differentials0-omegap}, which is
tantamount to what we wanted to see. 
The proof of~\ref{prop:differentials1} is
completely analogous.
\end{proof}

\section{Cohomologies in degree zero and centers} 
\label{sec:h0}

In this section $(S,L)$ is a triangularizable Lie algebra:
$S=\kk[x_1,\ldots,x_n]$ for some $n\geq1$ and $L$ is a sub-$S$-module  of
derivations of $S$ with a basis given by derivations
$\alpha_1,\ldots,\alpha_n$ that satisfy $\alpha_i(x_j) = 0$ if $i>j$ and
$\alpha_i(x_i)\neq0$ for every $i\in\nset{n}$.  Let $U$ be the enveloping
algebra of $(S,L)$.

\subsection{The cohomology of $S$ with values on $U$}
The Hochschild cohomology $H^\*(S,U)$ is the cohomology of the
complex~$(\X^\*,d^\*)$ of~\eqref{eq:koszul-adjunction}.

\begin{Lemma}\label{lem:h0key}
The restriction of $d^0:\X^0\to\X^1$ to $F_1\X^0$ has kernel $F_0\X^0$.
\end{Lemma}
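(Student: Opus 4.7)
The approach is a direct computation combining the Poincaré--Birkhoff--Witt basis of $U$ from~\eqref{eq:pbw} with the triangularity of $(\alpha_1,\ldots,\alpha_n)$. First, I would write an arbitrary element $u$ of $F_1\X^0 = F_1U$ in the form $u = f + \sum_{i=1}^n g^i \alpha_i$ with $f, g^1, \ldots, g^n \in S$, which is the general form of an element of order at most $1$. Using that $[f, x_k] = 0$ for $f\in S$ and that $[g^i\alpha_i, x_k] = g^i \alpha_i(x_k) \in S$, the explicit formula for $d^0$ shows that the coefficient of $\xx_k$ in $d^0(u) = \sum_{k=1}^n [u, x_k]\xx_k$ equals $\sum_{i=1}^n g^i \alpha_i(x_k)$. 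Hence $u \in \ker d^0$ if and only if this sum vanishes in $S$ for every $k \in \nset{n}$.

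These conditions form a linear system in $(g^1,\ldots,g^n)$ whose coefficient matrix is $\bigl(\alpha_i(x_k)\bigr)_{i,k}$, which, by the triangularizability of $L$, is upper triangular with nonzero diagonal entries $\alpha_k(x_k)$. Since $S$ is an integral domain, back-substitution through $k = 1, 2, \ldots, n$ forces $g^1 = g^2 = \cdots = g^n = 0$, whence $u = f \in S = F_0\X^0$. The reverse inclusion $F_0\X^0 \subset \ker d^0$ is immediate from $[s, x_k] = 0$ for $s \in S$.

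There is no real obstacle: the argument is a one-step application of the triangular form of Saito's matrix. The only point requiring mild care is the invocation of Proposition~\ref{prop:X-PBW} to justify that writing $u = f + \sum_i g^i \alpha_i$ with scalars in $S$ is unambiguous, so that comparing coefficients of $\xx_k$ in the $S$-basis of $\X^1$ is legitimate. This lemma will serve as the filtration-one base case for the subsequent inductive computation of $H^0(S,U)$ and, ultimately, of the center of $U$.
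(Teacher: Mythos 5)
Your proposal is correct and follows essentially the same route as the paper: write $u\in F_1U$ as $f+\sum_i g^i\alpha_i$ with $S$-coefficients, read off from $d^0(u)=\sum_k[u,x_k]\xx_k$ the system $\sum_i g^i\alpha_i(x_k)=0$, and use the triangular Saito matrix with nonzero diagonal (and $S$ a domain) to force $g^1=\cdots=g^n=0$ successively in $k$. This is exactly the paper's inductive elimination, phrased as solving a triangular linear system.
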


\begin{proof}
It is evident that $F_0\X^0=S$ is contained in~$\ker d^0$. Let $u\in F_1U$
and  $f^{1},\ldots,f^{n}\in S$ such that 
$ u\equiv \sum_{i=1}^{n}f^i\alpha_i$ modulo $S$.  We examine the equations
$d^0(u)(1|x_l|1)=0$, that is, $[u,x_l]=0$ for each $1\leq l\leq n$.  We first
observe that
\[
  0 
  = [u,x_1]
  = \sum_{i=1}^{n}f^i\alpha_i(x_1)
  = f^{1}\alpha_{1}(x_1),
\]
and then $f^1=0$.
Proceeding inductively on $k$, we assume that $ u= \sum_{i = k
}^{n}f^i\alpha_i$ and compute 
\[
  0 
  = [u,x_k]
  = f^{k}\alpha_{k}(x_k) + \ldots + f^{n}\alpha_{n}(x_k)
  = f^k\alpha_k(x_k).
\]
We deduce that $f^{k}=0$ and conclude that $u\in S$.
\end{proof}

\begin{Proposition}\label{prop:h0key}
If $p>0$ and $u\in F_pU$ are such that $d^0(u) \equiv 0$ modulo $F_{p-2}\X^1$
then $u\in F_{p-1}U$. 
\end{Proposition}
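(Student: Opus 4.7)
The plan is to emulate the strategy of Lemma~\ref{lem:h0key}, but applied to the top-degree coefficients of $u$ in the PBW basis of Proposition~\ref{prop:X-PBW}. Choose $f^I \in S$, indexed by $I\in\NN^n$ with $\abs{I}=p$, so that
\[
  u \equiv \sum_{\abs{I}=p} f^I \alpha^I \mod F_{p-1}U;
\]
it suffices to prove that every such $f^I$ vanishes. Applying Proposition~\ref{prop:differentials}~\ref{prop:differentials0} (equivalently, Lemma~\ref{lem:axkalphai} termwise), the coefficient of $\alpha^J\xx_k$ in $d^0(u)$ modulo $F_{p-2}\X^1$ is
\[
  \sum_{m=1}^n (j_m+1)\alpha_m(x_k)\, f^{J+e_m}
\]
for each $J\in\NN^n$ of order $p-1$ and each $k\in\nset{n}$. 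The hypothesis forces this sum to vanish; triangularity ($\alpha_m(x_k)=0$ when $m>k$) truncates it to $\sum_{m=1}^k (j_m+1)\alpha_m(x_k) f^{J+e_m}=0$, yielding a lower-triangular linear system in the unknown coefficients.

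This system is solved by inducting on $k\in\nset{n}$, proving at each stage that $f^I=0$ whenever $\abs{I}=p$ and $i_m\geq 1$ for some $m\leq k$. For the base $k=1$ the only surviving term of the equation above is $(j_1+1)\alpha_1(x_1) f^{J+e_1}=0$; since $\alpha_1(x_1)$ is a nonzero element of the polynomial domain $S$, hence a non-zero-divisor, we obtain $f^{J+e_1}=0$ for every $J$ of order $p-1$. For the inductive step at $k\geq 2$ and any such $J$, each term with $m<k$ has $f^{J+e_m}=0$ by the inductive hypothesis---the index $J+e_m$ has positive $m$-th coordinate with $m\leq k-1$---so the equation collapses to $(j_k+1)\alpha_k(x_k) f^{J+e_k}=0$, and the same argument forces $f^{J+e_k}=0$. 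Letting $J$ vary covers every $I$ of order $p$ with $i_k\geq 1$. Taking $k=n$, every $I$ of order $p\geq 1$ has at least one positive coordinate, so all $f^I$ vanish and $u\in F_{p-1}U$.

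I do not anticipate any serious obstacle: the argument is the natural iteration of Lemma~\ref{lem:h0key}, with Proposition~\ref{prop:differentials} already packaging the top-order computation in precisely the right form. The only delicate point is the combinatorial bookkeeping in the inductive step---verifying that the inductive hypothesis has nullified exactly the subdiagonal entries in the triangular system, so that on each row only the pivot term $(j_k+1)\alpha_k(x_k) f^{J+e_k}$ remains to be cleared, which it is, thanks to the integral-domain property of~$S$ and the hypothesis $\alpha_k(x_k)\neq 0$.
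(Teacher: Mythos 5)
Your proof is correct and follows essentially the same route as the paper: expand $u$ in the PBW basis modulo $F_{p-1}U$, read off via Lemma~\ref{lem:axkalphai}/Proposition~\ref{prop:differentials} that each coefficient of $\alpha^J\xx_k$ must vanish, and eliminate the $f^{J+e_m}$ by triangularity. The only difference is organizational: the paper packages the triangular elimination as an application of Lemma~\ref{lem:h0key} to the degree-one element $\Omega^0\left((j_n+1)f^{J+e_n},\ldots,(j_1+1)f^{J+e_1}\right)$ for each fixed $J$, whereas you re-run that elimination inline with an induction on the variable index $k$ across all $J$ simultaneously.
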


\begin{proof}
Let $\{ f^I : I\in\NN^n, \abs I = p \}\subset  S$ be such that  $u\equiv
\sum_{\abs I=p} f^I\alpha^I$ modulo $F_{p-1}U$: thanks to
Proposition~\ref{prop:differentials} we have that 
\[
  d^0(u)
  \equiv \sum_{\abs{J=(j_{n},\ldots,j_{1})}=p-1} 
    d \left( \Omega^0\left( 
      (j_{n}+1)f^{J+e_{n}},\ldots,(j_{1}+1)f^{J+e_{1}} 
    \right)  \right)  \alpha^J
  \mod F_{p-2}\X^1.
\]
We deduce that 
\(   
  0
  =
  d^0 \left( \Omega^0\left( 
      (j_{n}+1)f^{J+e_{n}},\ldots,(j_{1}+1)f^{J+e_{1}} 
  \right)  \right)
\)
for each $J$ with $\abs J = p-1$, 
provided that $p-1\geq0$.  Thanks to Lemma~\ref{lem:h0key} we deduce that $0
=f^{J+e_{N-2}}=\ldots=f^{J+e_{-1}}$. Since we can write every $I\in\NN$ with
$\abs I = p$ as $I = J +  e_m$ for some $m\in\nset{n}$ we conclude
that if $ p-1\geq0$ then $f^I = 0$ for every $I$ with $\abs I =p$.
\end{proof}

\begin{Proposition}\label{prop:h0}
The inclusion $S\inc U=\X^0$ induces an isomorphism of graded $U$-modules
$H^0(S,U) = S$
\end{Proposition}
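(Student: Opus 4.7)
\medskip

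The plan is to identify $H^0(S,U)$ with the kernel of the map $d^0 : \X^0 \to \X^1$ coming from the Koszul-adjunction complex $(\X^\*,d^\*)$ introduced in~\eqref{eq:koszul-adjunction}, and then show directly that this kernel equals $S = F_0 U \subset U = \X^0$.

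First I would verify the easy inclusion: for any $s \in S$, the formula $d^0(s) = \sum_{k=1}^n [s, x_k]\xx_k$ vanishes because $S$ is commutative, so $S \subseteq \ker d^0$. This already gives a map $S \to H^0(S,U)$ from the inclusion, which is evidently injective.

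For the reverse inclusion I would argue by contradiction using the order filtration $(F_p U)_{p\geq 0}$. Let $u \in \ker d^0$ and let $p$ be the least integer with $u \in F_p U$. If $p \geq 1$, then the vanishing of $d^0(u)$ implies in particular that $d^0(u) \equiv 0$ modulo $F_{p-2}\X^1$, and Proposition~\ref{prop:h0key} (which rests on Lemma~\ref{lem:h0key} and the leading-term computation of Proposition~\ref{prop:differentials}\ref{prop:differentials0}) then forces $u \in F_{p-1}U$, contradicting the minimality of $p$. Hence $p = 0$ and $u \in F_0 U = S$, establishing $\ker d^0 \subseteq S$.

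There is no real obstacle remaining, since the heavy lifting is already packaged in Proposition~\ref{prop:h0key}; the only thing to note is that the equality $H^0(S,U) = S$ obtained in this way is compatible with the $S$-module structures on both sides (and hence, through the identification of Theorem~\ref{thm:spectral}, with the $U$-module structure on $H^0(S,U)$), because the map $S \hookrightarrow U = \X^0$ is $S$-linear and the Hochschild $0$-cocycles inherit their $S$-bimodule action by restriction.
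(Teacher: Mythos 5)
Your proof is correct and takes essentially the same route as the paper: both arguments note that $S\subseteq\ker d^0$ and then use the order filtration together with Proposition~\ref{prop:h0key} to push a cocycle down to $F_0U=S$ (the paper phrases the descent by isolating the top filtration component of $u$, you phrase it via minimality of $p$, which is the same mechanism).
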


\begin{proof}
Let us write $u = u_0+\ldots+ u_p$ with $u_q\in F_{q}U\setminus F_{q-1}U$
and $p\geq0$ maximal among those $q$ such that $u_q\neq 0$.
As $d^0(u) = 0$ and $d^0(u_q)\in F_{q-1}\X^1$ for every $q\in\nset{0,p}$
we have that $d(u_p) \equiv 0
\mod F_{p-2}X^1$, and we may use Proposition~\ref{prop:h0key} to see that if
$p>0$ then $u_p=0$.
We conclude then that $p=0$, so that actually $u\in S$. We obtain the 
result with the evident observation that  every element of $S$ is a
$0$-cocycle in $\X^\*$.
\end{proof}

\subsection{The cohomology of~$U$}
Our recent calculation of $H^0(S,U)$ leaves us just one step away from
the zeroth Hochschild cohomology space of~$U$. 

\begin{Theorem}\label{thm:hh0}
Let $(S,L)$ be a triangularizable Lie--Rinehart algebra with enveloping
algebra~$U$. There is an isomorphism of vector spaces $\HH^0(U)\cong \kk$.
\end{Theorem}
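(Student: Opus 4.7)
Recall that $\HH^0(U) = \{u \in U : [u,v] = 0 \text{ for every } v \in U\}$ is the center of $U$. Since $U$ is generated as an algebra by the images of $S$ and $L$, an element $u \in U$ is central if and only if $[u,s] = 0$ for every $s \in S$ \emph{and} $[u,\alpha] = 0$ for every $\alpha \in L$. The plan is to use Proposition~\ref{prop:h0} to handle the first condition and the triangularity hypothesis to handle the second.

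\textbf{Step 1: reduce to $S$.} The space $H^0(S,U) = \Hom_{S^e}(S,U)$ identifies with the centralizer of $S$ in $U$, namely $\{u \in U : [u,s] = 0 \text{ for every } s \in S\}$. Any central element of $U$ certainly lies in this centralizer, and by Proposition~\ref{prop:h0} the centralizer equals $S$. Hence $\HH^0(U) \subseteq S$.

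\textbf{Step 2: impose commutation with $L$.} For $f \in S$ and $\alpha \in L$ the universal relations~\eqref{eq:universal} give $[\alpha, f] = \alpha_S(f) \in S$, so $f$ belongs to $\HH^0(U)$ if and only if $\alpha_i(f) = 0$ for each element $\alpha_i$ of the triangular basis of $L$. I will show by descending induction on $k$ that any such $f$ lies in $\kk[x_1,\ldots,x_{k}]$, ending with $f \in \kk$. For the base step $k=n$, apply $\alpha_n$: since $\alpha_n(x_j) = 0$ for $j<n$, the Leibniz rule yields
\[
0 = \alpha_n(f) = \alpha_n(x_n)\,\partial_{x_n} f,
\]
and as $\alpha_n(x_n) \neq 0$ in the domain $S$, we conclude $\partial_{x_n}f = 0$, i.e.\ $f \in \kk[x_1,\ldots,x_{n-1}]$. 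For the inductive step, assume $f \in \kk[x_1,\ldots,x_k]$; since $\alpha_k(x_j) = 0$ for $j<k$ by triangularity, and $\partial_{x_j}f = 0$ for $j>k$ by the induction hypothesis, the Leibniz rule collapses to $\alpha_k(f) = \alpha_k(x_k)\,\partial_{x_k}f$. The condition $\alpha_k(f) = 0$ together with $\alpha_k(x_k) \neq 0$ forces $\partial_{x_k}f = 0$, so $f \in \kk[x_1,\ldots,x_{k-1}]$. Iterating down to $k=0$ gives $f \in \kk$.

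\textbf{Conclusion and obstacle.} Combining the two steps yields $\HH^0(U) \subseteq \kk$, and the reverse inclusion is obvious since scalars are central. The substantive work is entirely carried by Proposition~\ref{prop:h0}; what remains is a short Leibniz-rule induction, which is the only place where the full triangularity condition $\alpha_i(x_j) = 0$ for $i>j$ (rather than just $\alpha_i(x_i) \neq 0$) is essential, as it ensures the computation of $\alpha_k(f)$ is triangularly uncoupled. There is no real obstacle beyond invoking these ingredients.
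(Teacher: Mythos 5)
Your proof is correct, and it reaches the conclusion by a somewhat more elementary route than the paper. The paper invokes the spectral sequence of Theorem~\ref{thm:spectral}: its corner gives the identification $\HH^0(U)\cong H^0_S(L,H^0(S,U))$, which by Proposition~\ref{prop:h0} becomes $H^0_S(L,S)=\{f\in S:\alpha_i(f)=0\ \text{for all }i\}$, and the paper then concludes by noting that $(\partial_1 f,\ldots,\partial_n f)$ lies in the kernel of the Saito matrix, which is triangular with nonzero determinant, forcing $f\in\kk$. You replace the homological identification with a direct algebraic one: the center is contained in the centralizer of $S$ in $U$, which is exactly $H^0(S,U)=\Hom_{S^e}(S,U)$ and equals $S$ by Proposition~\ref{prop:h0}; since $U$ is generated by $i(S)$ and $j(L)$ (as the PBW-type basis~\eqref{eq:pbw} makes explicit) and $[\alpha,f]=\alpha_S(f)$ by the universal relations~\eqref{eq:universal}, an $f\in S$ is central precisely when $\alpha_i(f)=0$ for all $i$ --- note $[s\alpha,f]=s\,\alpha_S(f)$, so checking on the basis suffices. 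Your descending induction is then just back-substitution in the triangular Saito system, the same linear-algebra fact the paper cites. What your route buys is independence from the spectral-sequence machinery for the degree-zero statement (only Proposition~\ref{prop:h0} and the generation of $U$ by $S$ and $L$ are needed); what the paper's route buys is uniformity, since the same spectral sequence is the tool used afterwards for $\HH^1(U)$, so the identification $\HH^0(U)\cong H^0_S(L,H^0(S,U))$ comes for free in that framework.
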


\begin{proof}
As a consequence of the immediate degeneracy of the spectral sequence of
Theorem~\ref{thm:spectral} there is an isomorphism of vector spaces
$\HH^0(U) \cong H_S^0(L,H^0(S,U))$. In view of
Proposition~\ref{prop:h0}, this isomorphism amounts to
\[
  \HH^0(U)
  \cong H_S^0(L,S)
  \cong \{ f\in S : \alpha_i(f) = 0\text{ if $i\in\nset{n}$}\}
\]
Since $\alpha_i(f)=\sum_{j=1}^n \alpha_i(x_j)\partial_jf$ for $f\in S$, the
condition that $\alpha_i(f) = 0$ if $i\in\nset{n}$ means that
$(\partial_1f,\ldots,\partial_nf)$ belongs to the kernel of the Saito's matrix
$M=\left( \alpha_i(x_j) \right)_{i,j=1}^n$. As this matrix is triangular and
its determinant is nonzero, the condition  $\alpha_i(f)=0$ for all
$i\in\nset{n}$ is equivalent to $\partial_jf=0$ for all $j\in\nset{n}$, which
is to say that $f\in\kk$.
\end{proof}

\begin{Corollary}
Let $r,n\geq1$. The centers of $\Diff\left( \A(C_r\wr\SS_n) \right)$ and 
of $\Diff\B_n$ are~$\kk$.
\end{Corollary}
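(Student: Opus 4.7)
The plan is to invoke Theorem~\ref{thm:hh0} with the Lie--Rinehart algebras attached to the arrangements in question, so the only work required is to check that the hypothesis of triangularizability is met and that the associated enveloping algebras are indeed the algebras of differential operators named in the statement.

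First I would handle the Lie--Rinehart side. For $\B_n$, Example~\ref{ex:braids:notilde} exhibits the basis $(\theta_1,\ldots,\theta_n)$ of $\Der\B_n$ whose Saito's matrix is lower triangular with nonzero diagonal entries, and Example~\ref{ex:braids:2conditions} records that $(S,\Der\B_n)$ is triangularizable in the sense of Definition~\ref{def:conditions}. For $\A(C_r\wr\SS_n)$, Example~\ref{ex:wreath} provides the basis $(\alpha_1,\ldots,\alpha_n)$ with diagonal Saito's matrix and nonzero diagonal, and Example~\ref{ex:wreath:triangular} states that $(S,\Der\A(C_r\wr\SS_n))$ is triangularizable.

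Next I would identify the enveloping algebras. Both $\B_n$ and $\A(C_r\wr\SS_n)$ are free arrangements (as witnessed by Saito's criterion applied to the bases above, or more conceptually by Terao's theorem cited after Example~\ref{ex:braids:tilde}), so Theorem~\ref{thm:diff-LR} yields canonical isomorphisms
\[
  U(S,\Der\B_n)\cong\Diff\B_n,
  \qquad
  U(S,\Der\A(C_r\wr\SS_n))\cong\Diff\A(C_r\wr\SS_n).
\]

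Finally, since $\HH^0(U)$ is by definition the center of $U$, Theorem~\ref{thm:hh0} applied in each of the two situations above immediately yields that both centers are $\kk$. I do not anticipate a real obstacle: all the technical content has already been packaged into Theorem~\ref{thm:hh0}, and the only thing to do is to assemble references to the earlier examples verifying triangularizability together with the identification furnished by Theorem~\ref{thm:diff-LR}.
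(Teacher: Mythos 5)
Your proposal is correct and follows the paper's own route: verify triangularizability via the examples for $\Der\B_n$ and $\Der\A(C_r\wr\SS_n)$, identify the enveloping algebras with the algebras of differential operators through Theorem~\ref{thm:diff-LR}, and apply Theorem~\ref{thm:hh0}. The paper's proof is just a terser version of exactly this assembly of references.
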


\begin{proof}
The algebras considered have been shown to satisfy the hypotheses of
Theorem~\ref{thm:hh0} in Examples~\ref{ex:braids:2conditions}
and~\ref{ex:wreath}.
\end{proof}

\section{The first cohomology space \texorpdfstring{$H^1(S,U)$}{H1}}
\label{sec:H1SU}

We now restrict our attention to the case in which $n=3$. Let then $(S,L)$ be
a Lie-Rinehart algebra with $S=\kk[x_1,x_2,x_3]$ and $L$ the free $S$-module
generated by the subset of derivations $\{\alpha_1,\alpha_2,\alpha_3\}$ in
$\Der S$. We suppose that $(S,L)$ is triangularizable, this is, $\alpha_i(x_j)
= 0$ if $i>j$ and $\alpha_1(x_1)\alpha_2(x_2)\alpha_3(x_3)\neq0$, and that
$(S,L)$ satisfies the Bézout condition:
\begin{itemize}
\item the polynomials $\alpha_2(x_2)$ and $\alpha_2(x_3)$ are coprime;
\item the polynomials $\alpha_1(x_1)$ and 
\(
  \det
    \begin{psmallmatrix}
      \alpha_1(x_2) & \alpha_1(x_3) \\
      \alpha_2(x_2) & \alpha_2(x_2)
    \end{psmallmatrix}
\)
are coprime.
\end{itemize}

\begin{Lemma}\label{lem:A3:H1SU:F1}    
Let $\left\{f_l^i: i\in\left\{ 1,2 \right\}, l\in\nset{3}\right\}\subset S$
and write $\omega=\sum_{l=1}^3\left( f_l^2\alpha_2 + f_l^1\alpha_1
\right)\xx_l\in\X^1$. If $\omega$  is a cocycle then there exist unique elements
$g_{11},g_{12},g_{22}$ of $S$ such that $g_{11}\alpha_1(x_1) = f_1^1$,
$g_{12}\alpha_1(x_1)=f_1^2$ and
$g_{22}\alpha_2(x_2) = f_2^2-g_{12}\alpha_1(x_2)$. These elements satisfy
\[
  \omega  
  \equiv d(\tfrac{1}{2}g_{11}\alpha_1^2 + g_{12}\alpha_2\alpha_1
    +\tfrac{1}{2}g_{22}\alpha_2^2 )
  \mod F_0\X^1.
\]
\end{Lemma}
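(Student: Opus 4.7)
The plan is to first unpack the cocycle condition $d^1(\omega)=0$ using the triangularity hypothesis. Writing $u_l = f_l^2\alpha_2 + f_l^1\alpha_1$, the vanishing amounts to $[u_k,x_l]=[u_l,x_k]$ for $1\leq k<l\leq 3$; since $\alpha_i(x_j)=0$ when $i>j$, this unfolds into three scalar identities in $S$, namely
\begin{align*}
\text{(C1)}\quad & f_1^1\alpha_1(x_2)+f_1^2\alpha_2(x_2) = f_2^1\alpha_1(x_1), \\
\text{(C2)}\quad & f_1^1\alpha_1(x_3)+f_1^2\alpha_2(x_3) = f_3^1\alpha_1(x_1), \\
\text{(C3)}\quad & f_2^1\alpha_1(x_3)+f_2^2\alpha_2(x_3) = f_3^1\alpha_1(x_2)+f_3^2\alpha_2(x_2).
\end{align*}

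Next I would build $g_{11}$ and $g_{12}$ by extracting the factor $\alpha_1(x_1)$ from $f_1^1$ and $f_1^2$ respectively. Eliminating $f_1^2$ between (C1) and (C2) yields $f_1^1\cdot\Delta\in\alpha_1(x_1)S$, where $\Delta = \alpha_1(x_2)\alpha_2(x_3)-\alpha_1(x_3)\alpha_2(x_2)$ is the $2\times 2$ determinant appearing in the Bézout hypothesis; since $\Delta$ is coprime with $\alpha_1(x_1)$ we get $\alpha_1(x_1)\mid f_1^1$, and the symmetric elimination of $f_1^1$ delivers $\alpha_1(x_1)\mid f_1^2$. As $S$ is a domain, $g_{11}$ and $g_{12}$ are uniquely determined. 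Substituting these back into (C1) and (C2) and cancelling $\alpha_1(x_1)$ gives the auxiliary identities $f_2^1=g_{11}\alpha_1(x_2)+g_{12}\alpha_2(x_2)$ and $f_3^1=g_{11}\alpha_1(x_3)+g_{12}\alpha_2(x_3)$. Plugging these into (C3) the $g_{11}$-contributions cancel and the relation becomes
\[
\alpha_2(x_3)\bigl(f_2^2-g_{12}\alpha_1(x_2)\bigr)=\alpha_2(x_2)\bigl(f_3^2-g_{12}\alpha_1(x_3)\bigr).
\]
The coprimality of $\alpha_2(x_2)$ and $\alpha_2(x_3)$ (the first Bézout hypothesis) now forces $\alpha_2(x_2)\mid f_2^2-g_{12}\alpha_1(x_2)$, which defines $g_{22}$ uniquely; as a byproduct one reads off $f_3^2=g_{12}\alpha_1(x_3)+g_{22}\alpha_2(x_3)$.

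Finally I would verify that $v\coloneqq \tfrac{1}{2}g_{11}\alpha_1^2+g_{12}\alpha_2\alpha_1+\tfrac{1}{2}g_{22}\alpha_2^2$ satisfies $d^0(v)\equiv\omega$ modulo $F_0\X^1$. A direct application of the Leibniz rule yields, modulo $S=F_0U$, the congruences $[\alpha_i^2,x_l]\equiv 2\alpha_i(x_l)\alpha_i$ and $[\alpha_2\alpha_1,x_l]\equiv\alpha_1(x_l)\alpha_2+\alpha_2(x_l)\alpha_1$, whence the coefficient of $\xx_l$ in $d^0(v)$, modulo $F_0\X^1$, equals $(g_{11}\alpha_1(x_l)+g_{12}\alpha_2(x_l))\alpha_1+(g_{12}\alpha_1(x_l)+g_{22}\alpha_2(x_l))\alpha_2$. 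The match with $f_l^1\alpha_1+f_l^2\alpha_2$ is tautological for $l=1$ (where the triangularity kills the $\alpha_2(x_1)$ terms), and for $l=2,3$ it is precisely the list of four consequences of (C1), (C2), (C3) and the definition of $g_{22}$ assembled in the previous paragraph. The main obstacle is locating the right elimination manipulations that turn the cocycle equations into divisibility statements; this is exactly the role played by the two Bézout hypotheses, each of which is used once.
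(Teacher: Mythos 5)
Your proof is correct and follows essentially the same route as the paper's: the same two Bézout coprimality conditions are used at the same two points to extract $g_{11},g_{12}$ (your elimination of (C1)--(C2) is exactly the paper's Cramer's-rule step) and then $g_{22}$ from the $\xx_2\wedge\xx_3$ relation, and the correcting element $\tfrac{1}{2}g_{11}\alpha_1^2+g_{12}\alpha_2\alpha_1+\tfrac{1}{2}g_{22}\alpha_2^2$ is the same. The only difference is bookkeeping: the paper subtracts the coboundaries $d(u_1)$ and $d(u_2)$ successively and re-reads the cocycle equations of the corrected cochain, whereas you perform the full elimination first and verify $d^0(v)\equiv\omega$ modulo $F_0\X^1$ coefficient by coefficient at the end.
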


\begin{proof}
The components in $\xx_1\wedge\xx_2$ and $\xx_1\wedge\xx_3$ of $d\omega=0$
tell us that $\alpha_1(x_j)f_1^1 +\alpha_2(x_j)f_1^j =\alpha_1(x_1)f_j^1 $ 
for $j\in\{2,3\}$. We can arrange these two equations as 
\[\label{eq:exA3:1-j}
  \begin{pmatrix}
    \alpha_1(x_2) & \alpha_2(x_2)
    \\
    \alpha_1(x_3) & \alpha_2(x_3)
  \end{pmatrix}
  \begin{pmatrix}
    f_1^{1} \\ f_1^{2} 
  \end{pmatrix}
  = 
  \alpha_1(x_1 )
  \begin{pmatrix}
     f_{2}^{1} \\  f_{3}^{1}
  \end{pmatrix}
\]
and then Cramer's rule tells us that if $i\in\{1,2\}$ then
\(
  f_1^i\det \tilde M = \alpha_1(x_1) \det \tilde M_i,
\)  
where $\tilde M$ is the matrix on the left hand of \eqref{eq:exA3:1-j} and $\tilde M_i$ is
the matrix obtained by replacing the $i$th column of $\tilde M$ by
$
  \begin{psmallmatrix}
     f_{2}^{1} \\  f_{3}^{1}
  \end{psmallmatrix}.
$
It follows that $\alpha_1(x_1)$ divides $f_1^i$ ---because it is coprime with
$\det \tilde M$ in view of the Bézout hypothesis--- and then there exist
$g_{11}$ and $g_{12}$ in $S$ such that $g_{1i}\alpha_1(x_1)=f_1^i$.
 
Let $u_1\coloneqq \tfrac{1}{2}g_{11}\alpha_1^2 + g_{12}\alpha_2\alpha_1$
and
$\tilde\omega \coloneqq \omega - d(u_1)$, and write
$\tilde\omega=\sum_{l=1}^3\left( \tilde f_l^2\alpha_2 + \tilde f_l^1\alpha_1
\right)\xx_l$.~Since 
\begin{align}
  d(u_1):x_1 & \mapsto [u,x_1]
  \equiv
  g_{11}\alpha(x_1)\alpha_1 + g_{12}\alpha_1(x_1)\alpha_2 
  \mod S
  \\
  &\hphantom{\mapsto [u,x_1]}
  \;= f_1^2\alpha_1 + f_1^2\alpha_2 ,  
  \\
  x_2 & \mapsto [u,x_2]
  \equiv
   g_{11}\alpha_1(x_2)\alpha_1 
    + g_{12}\alpha_2(x_2)\alpha_1
    + g_{12}\alpha_1(x_2)\alpha_2 
  \mod S
  \label{eq:exA3:du1:2}
\end{align}
we have $\tilde f_1^1=\tilde f_1^2=0$. Now,
the equation $d\tilde\omega=0$ in $\xx_1\wedge\xx_2$ and $\xx_1\wedge\xx_3$
tells us, as in~\eqref{eq:exA3:1-j}, that $\tilde f_2^1=\tilde f_3^1=0$,
and in $\xx_2\wedge\xx_3$ that 
$ \alpha_2(x_2)\tilde f_3^2 = \alpha_2(x_3)\tilde f_2^2 $. Thanks to
the Bézout condition there exists
$g_{22}\in S$ such that $g_{22}\alpha_2(x_2) = \tilde f_2^2$; in view
of~\eqref{eq:exA3:du1:2}, $\tilde f_2^2$ is equal to
$f_2^2-g_{12}\alpha_1(x_2)$.
Put $u_2 \coloneqq \tfrac{1}{2} g_{22}\alpha_2^2$. We see that
$d(u_2)(x_1)=0$ and that 
\begin{align}
\MoveEqLeft
  d(u_2)(x_2)
  =[u_2,x_2] 
  \equiv g_{22}\alpha_2(x_2) \alpha_2 
  \mod S
  \\
  &= \tilde f_2^2\alpha_2.
\end{align}
The difference $\bar\omega\coloneqq\tilde\omega-d(u_2)$ is therefore a
coboundary with no component modulo $S$ in $\xx_1$ nor in $\xx_2$, so we can
write $\bar\omega \equiv \left( f^1\alpha_1 + f^2\alpha_2  \right)\xx_3\mod
F_0\X^1$.  Now, the equations that come from $\bar\omega$ being a coboundary
are $0=f^1\alpha_1(x_1)$ in $\xx_1\wedge\xx_3$, from which $f^1=0$, and
$0=f^2\alpha_2(x_2)$ in $\xx_2\wedge\xx_3$, whence finally $\bar\omega\in
F_0\X^1$.  We have in this way obtained that $\omega \equiv d(u_1+u_2)\mod
F_0\X^1$, as desired.
\end{proof}

\begin{Proposition}\label{prop:A3:H1SU:Fp}
Let $p\geq0$ and $\omega\in F_p\X^1$, and let $\left\{ f^{(i_3,i_2,i_1)}_l :
l\in\nset{3}, i_3,i_2,i_1\geq0 \right\}\subset S $ such~that
\[\label{eq:exA3:omega:p}
  \omega 
  \equiv \sum_{l=1}^3\sum_{i_1+i_2+i_3= p}
    f_l^{(i_3,i_2,i_1)}\alpha^{(i_3,i_2,i_1)} \xx_l
  \mod F_{p-1}\X^1.
\]
If $\omega$ is a cocycle and
$f^{(p,0,0)}_1=f^{(p,0,0)}_2=f^{(p,0,0)}_3=0$ then $\omega\in F_{p-1}\X^1$.
\end{Proposition}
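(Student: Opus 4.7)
The plan is to translate the cocycle condition $d^1(\omega)=0$ via Proposition~\ref{prop:differentials}\thmitem{2} and then reduce the analysis to repeated applications of Lemma~\ref{lem:A3:H1SU:F1}. From Proposition~\ref{prop:differentials}\thmitem{2} we obtain
\[
  \sum_{\abs J = p-1} d^1(\Omega^1_J)\,\alpha^J \equiv 0 \mod F_{p-2}\X^2,
\]
where $\Omega^1_J \coloneqq \Omega^1((j_3+1)f^{J+e_3},(j_2+1)f^{J+e_2},(j_1+1)f^{J+e_1}) \in F_1\X^1$ for each $J\in\NN^3$ with $\abs J=p-1$. Each $d^1(\Omega^1_J)$ lives in $F_0\X^2$ while the $\alpha^J$ with $\abs J=p-1$ are $S$-linearly independent in $F_{p-1}U/F_{p-2}U$, so the displayed congruence forces $d^1(\Omega^1_J)=0$ for every such $J$: the problem is reduced to understanding cocycles in $F_1\X^1$.

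The hypothesis $f_l^{(p,0,0)}=0$ is used precisely to kill the $\alpha_3$-component $(pf_1^{(p,0,0)},pf_2^{(p,0,0)},pf_3^{(p,0,0)})$ of $\Omega^1_{(p-1,0,0)}$, placing it within the hypothesis of Lemma~\ref{lem:A3:H1SU:F1}. That lemma, combined with the Bézout condition of Definition~\ref{def:conditions}, pins down the coefficients $f_l^{(p-1,0,1)}$ and $f_l^{(p-1,1,0)}$ through divisibility relations and provides an explicit coboundary---built from $\alpha_1^2\alpha_3^{p-1}$, $\alpha_2\alpha_1\alpha_3^{p-1}$, and $\alpha_2^2\alpha_3^{p-1}$---whose subtraction removes these leading terms from~$\omega$.

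The induction then iterates through the remaining $J$ with $\abs J=p-1$ in decreasing order of the $\alpha_3$-weight $j_3$: at each stage the $\alpha_3$-coefficient $(j_3+1)f^{J+e_3}$ of $\Omega^1_J$ has already been annihilated by the adjustments made at earlier stages, since $J+e_3$ has third coordinate $j_3+1>j_3$ and was processed first. Hence Lemma~\ref{lem:A3:H1SU:F1} reapplies, and the vanishings propagate until $\omega\in F_{p-1}\X^1$. The main obstacle I foresee is the combinatorial bookkeeping of this iteration: one must verify that the coboundaries subtracted at each step sit in $F_{p+1}U$ so as to affect only coefficients at the correct filtration degree, and that both Bézout relations of Definition~\ref{def:conditions}---together with the triangularity of Saito's matrix---are invoked at every stage to turn the divisibility statements of Lemma~\ref{lem:A3:H1SU:F1} into actual vanishings.
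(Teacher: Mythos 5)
Your plan follows the paper's proof essentially step for step: the paper likewise uses Proposition~\ref{prop:differentials} to split the top-order cocycle equation into the order-one equations $d^1(\Omega^1_J)=0$, applies Lemma~\ref{lem:A3:H1SU:F1} starting from the hypothesis $f^{(p,0,0)}_l=0$, and runs a descending induction on the $\alpha_3$-weight with an inner induction (Lemma~\ref{lem:A3:H1SU:Fp:induct2}) that carries out exactly the bookkeeping you defer, namely that the subtracted correction terms, being of order $p+1$ and supported on the appropriate monomials $\alpha^{(i_3,i_2,i_1)}$, do not disturb coefficients already cleared. One caution: since the mechanism proceeds by subtracting coboundaries, what it actually delivers (and what the paper later invokes) is that $\omega$ is a coboundary modulo $F_{p-1}\X^1$, not literally that $\omega\in F_{p-1}\X^1$ --- e.g.\ $d^0(\tfrac12\alpha_1^2)$ satisfies the hypotheses with $p=1$ but lies outside $F_0\X^1$ --- so state your conclusion (as the paper's own argument does) in the ``cohomologous modulo $F_{p-1}\X^1$'' form.
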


\begin{proof}
Let us prove by descending induction on $i$ from $p$ to $0$ that 
\[\label{claim:exA3:H1SU:Fp:induct}
  \claim{the cocycle $\omega$ is cohomologous modulo $F_{p-1}\X^1$ 
    to a cocycle of the form~\eqref{eq:exA3:omega:p}    
    with $f^{(i_{3},i_2,i_{1})}_l=0$  if $l\in\nset{3}$ and $i_{3} \geq i$.
  }
\]
Our hypotheses give us the truth of~\eqref{claim:exA3:H1SU:Fp:induct} for
$i=p$.
Suppose now that~\eqref{claim:exA3:H1SU:Fp:induct} is true for $p,\ldots,i$
and assume, without loss of generality, that $\omega$ \emph{is} of the
form~\eqref{eq:exA3:omega:p} with $f^{(i_{3},i_2,i_{1})}_l=0$ if
$l\in\nset{3}$, $i_1+i_2+i_3= p$ and~$i_{3} \geq i$.

\begin{Lemma}\label{lem:A3:H1SU:Fp:induct2}
Let $q\in\{0,\dots,p-i+1\}$. The cocycle $\omega$ is cohomologous modulo
$F_{p-1}\X^1$ to a cocycle of the form~\eqref{eq:exA3:omega:p}    with
$f^{(i-1,p-i+1,0)}_l=\ldots=f^{(i-1,p-i+1-q,q)}_l=0$ and
$f^{(i_{3},i_2,i_{1})}_l=0$  if $i_1+i_2+i_3= p$ and $i_{3} \geq i$ for every
$l\in\nset{3}$.
\end{Lemma}

The auxiliary result above implies at once the truth of the inductive
step of the proof of~\eqref{claim:exA3:H1SU:Fp:induct}, 
thus demonstrating Proposition~\ref{prop:A3:H1SU:Fp}.
\end{proof}

\begin{proof}[Proof of Lemma~\ref{lem:A3:H1SU:Fp:induct2}]
Suppose that $q=0$. Equation $d\omega=0$
in its component $(i-1,p-i,0)$ reads, thanks
to~Proposition~\ref{prop:differentials}, 
\[
  0 
  = d\left(      
      \Omega^1(if^{(i,p-i,0)},(p-i+1)f^{(i-1,p-i+1,0)},f^{(i-1,p-i,1)}) 
    \right)
\]
and the inductive hypothesis~\eqref{claim:exA3:H1SU:Fp:induct} tells us that
$f^{(i,p-i,0)}=0$. Applying now Lemma~\ref{lem:A3:H1SU:F1} in
we obtain that there are 
$g_{11},g_{12},g_{22} \in S $ such that 
\begin{align}
  & g_{11}\alpha_1(x_1)  = f^{(i-1,p-i,1)}_1,
  \qquad g_{12}\alpha_1(x_1) =(p-i+1)f^{(i-1,p-i+1,0)}_1,
  \\
  & g_{22}\alpha_2(x_2) = (p-i+1)f^{(i-1,p-i+1,0)}_2-g_{12}\alpha_1(x_2)
\end{align}

Let $v = (\tfrac{1}{2} g_{11}\alpha_1^2 + \tfrac{1}{(p-i+1)}
g_{12}\alpha_2\alpha_1)\alpha^{(i-1,p-i,0)}$ and write
$\tilde\omega=\omega-d(v)$, so that there exists $\left\{ \tilde
f_l^{(i_3,i_2,i_1)} \right\}\subset S$ such that
\[
  \tilde\omega 
  \equiv \sum_{l=3}^n\sum_{i_1+i_2+i_3= p}
    \tilde f_l^{(i_3,i_2,i_1)}\alpha^{(i_3,i_2,i_1)} \xx_l
  \mod F_{p-1}\X^1.
\]
Recall that $d(v):x_l \mapsto [v,x_l]$ for $l\in\nset{3}$. Since
\begin{align}
\MoveEqLeft[2]
  [v,x_1]
  \equiv
  (g_{11}\alpha_1(x_1)\alpha_1 +
\tfrac{1}{(p-i+1)}g_{12}\alpha_1(x_1)\alpha_2)\alpha^{(i-1,p-i,0)}
  \mod F_{p-1}U
  \\
  &
  =   f^{(i-1,p-i,1)}_1\alpha^{(i-1,p-i,1)} 
    + f^{(i-1,p-i+1,0)}_1\alpha^{(i-1,p-i+1,0)},
\end{align}
we have that $\tilde f^{(i-1,p-i,1)}_1 =\tilde f^{(i-1,p-i+1,0)}_1=0$.
Moreover, as $[v,x_2],[v,x_3]\in \bigoplus_{ i_3< i}S\alpha^{(i_3,i_2,i_1)}$
the coefficients $\tilde f_l^{(i_3,i_2,i_1)}$ are equal to
$f_l^{(i_3,i_2,i_1)}$ and therefore to zero if $i_1+i_2+i_3= p$,
$i_3\geq i$ and $l\in\nset{3}$.

We now look at equation $d\tilde\omega=0$, again in its coefficient of
$\alpha^{(i-1,p-i,0)}$ to obtain that 
\(
  0 
  = d\left(      
      \Omega^1(0,(p-i+1)\tilde f^{(i-1,p-i+1,0)},\tilde f^{(i-1,p-i,1)}) 
    \right).
\)
This equation in its component in $\xx_1\wedge\xx_2$ tells us, thanks
to~\eqref{eq:exA3:1-j}, that $\tilde f^{(i-1,p-i,1)}_2=\tilde
f^{(i-1,p-i,1)}_3=0$. On the other hand, applying~Lemma~\ref{lem:A3:H1SU:F1}
we get
%
$g\in S$ such that
$g\alpha_2(x_2) = (p-i+1)\tilde f^{(i-1,p-i+1,0)}_2$. 
Let now $\lambda = 1/(p-i+2)(p-i+1)$ and $\tilde v =\lambda g
\alpha^{(i-1,p-i+2,0)}$. Since $[\tilde v,x_1]=0$, 
\begin{align}
\MoveEqLeft[2]
  [\tilde v,x_2]
  \equiv
  \lambda g (p-i+2)\alpha_2(x_2)\alpha^{(i-1,p-i+1,0)}
\mod F_{p-1}U
  \\
  & = \lambda  (p-i+1)\tilde f^{(i-1,p-i+1,0)}_2(p-i+2)\alpha^{(i-1,p-i+1,0)}
  \\
  & =\tilde f^{(i-1,p-i+1,0)}_2\alpha^{(i-1,p-i+1,0)}
\end{align}
and $[\tilde v,x_3] \in \bigoplus_{ i_3<i}S\alpha^{(i_3,i_2,i_1)}$, the
difference $\tilde\omega - d(\tilde v)$ is a cohomologous modulo $F_{p-1}\X^1$
to a cocycle 
$\eta 
  = \sum_{l=3}^n\sum_{i_1+i_2+i_3= p}
    h_l^{(i_3,i_2,i_1)}\alpha^{(i_3,i_2,i_1)} \xx_l$
with $h^{(i_3,i_2,i_1)}_l=0$ if $i_1+i_2+i_3=p$, $i_3\geq i $ and
$l\in\nset{3}$ and $h^{(i-1,p-i,1)}_l = h^{(i-1,p-i+1,0)}_l=0$ if
$l\in\{1,2\}$.  Applying~Lemma~\ref{lem:A3:H1SU:F1} one final time we obtain
$\tilde g_{11},\tilde g_{12},\tilde g_{22} \in S $ such that $\tilde
g_{11}\alpha_1(x_1) =(p-i+1) h^{(i-1,p-i+1,0)}_1$, $\tilde
g_{12}\alpha_1(x_1)=h^{(i-1,p-i,1)}_1$ and $\tilde g_{22}\alpha_2(x_2) =
(p-i+1)h^{(i-1,p-i,1)}_2-\tilde g_{12}\alpha_1(x_2)$ ---and therefore $\tilde
g_{11}$, $\tilde g_{12}$ and $\tilde g_{22}$ must be equal to~$0$---   and
that satisfy
\[
  \Omega^1(0,(p-i+1)h^{(i-1,p-i+1,0)},h^{(i-1,p-i,1)}) 
  \equiv 
    d(\tfrac{1}{2}\tilde g_{11}\alpha_1^2 + \tilde g_{12}\alpha_2\alpha_1
    + \tfrac{1}{2}\tilde g_{22}\alpha_2^2 )
  \mod F_0\X^1.
\]
It follows that $h^{(i-1,p-i+1,0)}=h^{(i-1,p-i,1)}=0$, and therefore
that $\eta\equiv 0\mod F_{p-1}\X^1$. This finishes the proof
of the base step of~Lemma~\ref{lem:A3:H1SU:Fp:induct2}.

\bigskip

We finally deal with the inductive step of~Lemma~\ref{lem:A3:H1SU:Fp:induct2}. 
Let $q$, $i$ and $\omega$ be as in the statement. The component in
$\alpha^{(i-1,p-i-q,q)}$ of equation $d\omega =0$ yields
\[
  0 = 
  d\left( \Omega^1\left( 
    if^{(i,p-i-q,q)},(p-i-q+1)f^{(i-1,p-i-q+1,q)},(q+1)f^{(i-1,p-i-q,q+1)}
  \right) \right).
\]
Now, our inductive hypotheses of~\eqref{claim:exA3:H1SU:Fp:induct} and of
Lemma~\ref{lem:A3:H1SU:Fp:induct2} tell us, respectively, that
$f^{(i,p-i-q,q)}=0$ and that $f^{(i-1,p-i-q+1,q)}=0$, and therefore our
equation above reduces to 
\[
  0 = 
  d\left( \Omega^1\left( 
    0,0,f^{(i-1,p-i-q,q+1)}
  \right) \right).
\]
Applying to this situation~Lemma~\ref{lem:A3:H1SU:F1} we obtain $g\in S$
such that $g\alpha_1(x_l) = f^{(i-1,p-i-q,q+1)}_l$ for $l\in\nset{3}$.  Let
$v=\tfrac{1}{(q+2)} g\alpha^{(i-1,p-i-q,q+2)}$ and write
$\tilde\omega=\omega-d(v)$: let
 $\left\{ f_l^{(i_3,i_2,i_1)} \right\}\subset S$ such that
\(
  \tilde\omega 
  \equiv \sum_{l=3}^n\sum_{i_1+i_2+i_3= p}
    \tilde f_l^{(i_3,i_2,i_1)}\alpha^{(i_3,i_2,i_1)} \xx_l
\)
modulo $ F_{p-1}\X^1$. As
\begin{align*}
  &[v,x_1] 
  \equiv g\alpha_1(x_1)\alpha^{(i-1,p-i-q,q+1)}
  = f^{(i-1,p-i-q,q+1)}_1\alpha^{(i-1,p-i-q,q+1)}
  \mod F_{p-1}U
\intertext{and if $j\in\left\{ 2,3 \right\}$ then }
  &[v,x_j]
  \in S\alpha^{(i-2,p-i-q,q+2)}
    \oplus S\alpha^{(i-1,p-i-q-1,q+2)}
    \oplus S\alpha^{(i-1,p-i-q,q+1)}
    \oplus F_{p-1}U,
\end{align*}
we obtain that  $\tilde f_l^{(i_3,i_2,i_1)}=0$ whenever
$i_3\geq i$ and 
$\tilde f^{(i-1,p-i+1,0)}_l=\ldots=\tilde f^{(i-1,p-i+1-q,q)}_l=0$ 
for every $l\in\nset{3}$ and, in addition, that
$\tilde f^{(i-1,p-i-q,q+1)}_1=0$. As a consequence of this, 
the component in
$\alpha^{(i-1,p-i-q,q)}$ of equation $d\tilde\omega =0$ 
reduces to 
\(
  0 = 
  d\left( \Omega^1\left( 
    0,0,\tilde f^{(i-1,p-i-q,q+1)}
  \right) \right).
\)
The element $\tilde g$ that is provided for this situation
by~Lemma~\ref{lem:A3:H1SU:F1} satisfies $\tilde g\alpha_1(x_l) =\tilde
f^{(i-1,p-i-q,q+1)}_l$ for $l\in\nset{3}$: it follows that $g=0$ and hence
$\tilde f^{(i-1,p-i-q,q+1)}_l=0$ for $l\in\nset{3}$ and $\tilde\omega\equiv
0\mod F_{p-1}\X^1$. This finishes the proof of
Lemma~\ref{lem:A3:H1SU:Fp:induct2}.
\end{proof}

From this point on we demand to $(S,L)$ that in addition it satisfy the
orthogonality condition: that there be a family $u_1,u_2,u_3$ of elements of
$U$ that can be written as $u_k = \alpha_3 + h_k^2\alpha_2 + h_k^1\alpha_1$
for some $\left\{ h_k^i : k\in\nset{3}, i\in\nset{2}\right\}\subset S$ and
such that $[u_k,x_l]=0$ whenever $k\neq l$.  The idea is that that we can add
to any cocycle in $F_p\X^1$ an $S$-linear combination of $u_k^p\xx_k$ to 
remove its components in the maximum power of $\alpha_3$ and in this way
obtain a cocycle that falls in the hypotheses of
Proposition~\ref{prop:A3:H1SU:Fp}. 

\begin{Corollary}\label{coro:A3:H1generators}
Let $\{u_1,u_2,u_3\}$ be the family that makes $(S,L)$ satisfy the
orthogonality condition.
\begin{thmlist}
\item The cochains $\eta^p_k=u_k^p\xx_k\in F_p\X^1$ defined for~$p\geq0$
and~$k\in\nset{3}$ are cocycles.

\item\label{coro:A3:H1generators:these} 
Every cocycle in $\X^1$ is cohomologous to one in the $S$-submodule
of $\X^1$ generated by  $\left\{ \eta_k^p  : k\in\nset{3}, p\geq0 \right\}$.
\end{thmlist}
\end{Corollary}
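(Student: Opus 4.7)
The plan is to deduce both parts of the corollary from the machinery in Proposition~\ref{prop:A3:H1SU:Fp}. Part~(i) is a direct computation exploiting the orthogonality condition, and for part~(ii) I argue by induction on the filtration level $p$ with $\omega\in F_p\X^1$, using the cocycles $\eta_l^p$ to cancel the leading power-of-$\alpha_3$ term before invoking Proposition~\ref{prop:A3:H1SU:Fp}.

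For part~(i), I apply the formula for $d^1$ recalled after Proposition~\ref{prop:X-PBW} to $\eta_k^p = u_k^p\xx_k$. Since this cochain has only one nonzero component,
\[
  d^1(\eta_k^p)
  = \sum_{l > k} [u_k^p, x_l]\,\xx_k\wedge\xx_l
  - \sum_{l < k} [u_k^p, x_l]\,\xx_l\wedge\xx_k.
\]
The orthogonality condition in Definition~\ref{def:ort} gives $[u_k, x_l] = 0$ whenever $l \neq k$, and a short induction on $p$ via the Leibniz rule $[ab,c] = a[b,c]+[a,c]b$ in $U$ then yields $[u_k^p, x_l] = 0$ for every $l \neq k$. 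Every term above vanishes, and so $\eta_k^p$ is a cocycle.

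For part~(ii), I proceed by induction on the smallest integer $p \geq 0$ such that $\omega\in F_p\X^1$. The base case $p=0$ is immediate: any $\omega = \sum_l f_l\,\xx_l \in F_0\X^1$ with $f_l \in S$ is automatically a cocycle and equals $\sum_l f_l\,\eta_l^0$. For the inductive step, write
\[
  \omega \equiv \sum_{l=1}^3\sum_{\abs I = p} f_l^I\,\alpha^I\xx_l \mod F_{p-1}\X^1
\]
and consider $\omega' \coloneqq \omega - \sum_{l=1}^3 f_l^{(p,0,0)}\eta_l^p$. Since $u_l = \alpha_3 + h_l^2\alpha_2 + h_l^1\alpha_1$ lies in $F_1 U$ and the associated graded of $U$ is commutative, one has
\[
  u_l^p
  \equiv \alpha_3^p + \sum_{\abs I = p,\, I \neq (p,0,0)} c_l^I\,\alpha^I
  \mod F_{p-1}U
\]
for certain $c_l^I \in S$. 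Consequently the coefficient of $\alpha^{(p,0,0)}\xx_l$ in $\omega'$ modulo $F_{p-1}\X^1$ vanishes for every $l \in \nset{3}$, and Proposition~\ref{prop:A3:H1SU:Fp} then yields that $\omega'$ is cohomologous to an element of $F_{p-1}\X^1$, to which the induction hypothesis applies. Combining, $\omega$ is cohomologous to an $S$-linear combination of $\{\eta_k^q : k\in\nset{3},\ 0 \leq q \leq p\}$, as required.

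The conceptual content has already been packaged in Proposition~\ref{prop:A3:H1SU:Fp}; the only delicate point is the symbol calculation giving the leading term of $u_l^p$ in the PBW basis, which guarantees that a single subtraction of $f_l^{(p,0,0)}\eta_l^p$ precisely cancels the $\alpha^{(p,0,0)}\xx_l$ coefficient and produces a cocycle satisfying the hypothesis of the proposition.
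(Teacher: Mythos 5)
Your proof is correct and follows essentially the same route as the paper's: induction on the filtration degree $p$, subtracting $\sum_l f_l^{(p,0,0)}\eta_l^p$ to annihilate the coefficients of $\alpha_3^p\xx_l$, then invoking Proposition~\ref{prop:A3:H1SU:Fp} and the inductive hypothesis on the remainder in $F_{p-1}\X^1$. The only difference is that you spell out part \thmitem{1} and the leading-symbol computation $u_l^p\equiv\alpha_3^p+\cdots$ explicitly, which the paper leaves implicit.
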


\begin{proof}
Let us denote by $Z^1$ the $S$-module generated by $\left\{ \eta_l^p  :
l\in\nset{3}, p\geq0 \right\}$.  We prove by induction on $p\geq0$ that if
$\omega\in F_p\X^1$ is a cocycle then there exist $z\in Z^1$ and $u\in U$ 
such that $ \omega  = d^0(u) + z$.  We first observe that $F_0(\X^1)=F_0(Z^1)$
because $\xx_l = \eta_l^0$, and then for $p=0$ we have that
$\omega\in F_0(\X^1)\subset Z^1$.

Assume now that  $p>0$ and let $\left\{ f^I_l : l\in\nset{3}, I\in\NN^3
\right\}\subset S $ such that
\(
  \omega 
  \equiv \sum_{l=1}^3\sum_{\abs{I} = p}
    f_l^I\alpha^I \xx_l
  \mod F_{p-1}\X^1.
\)
Defining $z = \sum_{l=1}^3
f_l^{(p,0,0)}\eta_l^p$ we see that the cocycle
\(
  \tilde\omega \coloneqq \omega - z
\)
has its components in $\alpha^{p}\xx_1,\alpha^{p}\xx_2,\alpha^{p}\xx_3$ equal to
zero, and applying Proposition~\ref{prop:A3:H1SU:Fp} we deduce that
$\tilde\omega$ is a coboundary modulo $F_{p-1}\X^1$: let $u\in U$ and
$\omega'\in F_{p-1}\X^1$ be such that $\tilde\omega = d^0(u) +\omega'$.  The
inductive hypothesis tells us that there exist $u'\in U$ and $z'\in Z^1$ such
that $\omega' = d^0(u') + z'$, and thus $\omega = \tilde\omega + z = d^0(u+u')
+(z+z')$, as we wanted.
\end{proof}

\begin{Proposition}
Let $p\geq0$.
\begin{thmlist}
\item
Let $\omega\in F_p\X^1$ be a cocycle, so 
that there exist $\{f_1,f_2,f_3\}\subset S$ and $u\in U$ such that 
$\omega \equiv \sum_{l=1}^3f_l\eta^p_l + du$ modulo $F_{p-1}\X^1$.  
The cocycle $\omega$ is equivalent to a coboundary modulo $F_{p-1}\X^1$ if and
only if $\sum_{l=1}^3f_l\xx_l$ is a coboundary.

\item
The unique $S$-linear map $\gamma_p : F_0\X^1 \to  
F_pX^1$ such that $\xx_l\mapsto\eta_l^p$
if $1\leq l\leq 3$ induces an isomorphism of $S$-modules
\[
  F_{p}H^1(S,U) / F_{p-1}H^1(S,U) \cong F_0H^1(S,U).
\]
\end{thmlist}
\end{Proposition}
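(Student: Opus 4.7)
My plan proceeds by first establishing part~(i), from which part~(ii) follows directly: by Corollary~\ref{coro:A3:H1generators:these}, every class in $F_p H^1(S,U)/F_{p-1}H^1(S,U)$ is represented by $\sum_l f_l \eta_l^p$ for some $f_l \in S$, so part~(i) says precisely that the induced map $F_0 \X^1 \to F_p H^1/F_{p-1}H^1$ has kernel $d^0(F_1 U)$ and therefore descends to the desired isomorphism with $F_0 H^1 = F_0\X^1 / d^0(F_1 U)$. The substantive content is the equivalence in~(i).

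For the ``if'' direction of~(i), given $\sum_l f_l \xx_l = d^0(v)$ with $v = \sum_i g^i \alpha_i \in F_1 U$, so that $f_l = \sum_i g^i \alpha_i(x_l)$, I would construct $V \in F_{p+1}U$ with $d^0(V) \equiv \sum_l f_l \eta_l^p$ modulo $F_{p-1}\X^1$. The crucial identity, which follows by iterated Leibniz from the orthogonality $[u_l, x_k] = 0$ for $k \neq l$, is
\[
  d^0(u_l^{p+1}) \equiv (p+1) c_l u_l^p \xx_l \mod F_{p-1}\X^1,
\]
where $c_l = [u_l, x_l] \in S$. Over the fraction field $K$ of $S$ this suggests the candidate $V = \sum_l \frac{f_l}{(p+1) c_l}\, u_l^{p+1}$, which by this identity automatically satisfies $d^0(V) \equiv \sum_l f_l \eta_l^p$ modulo $F_{p-1}\X^1$. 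The nontrivial step is to verify that $V$ is actually integral, i.e.\ lies in $F_{p+1}U$ and not merely in $F_{p+1}U \otimes_S K$, whenever $(f_l)_l$ belongs to the image of Saito's matrix; I would do this by decomposing $V = \sum_i g^i V_i^{(p)}$ with $V_i^{(p)} = \sum_l \frac{\alpha_i(x_l)}{(p+1) c_l}\, u_l^{p+1}$ and showing $V_i^{(p)} \in F_{p+1}U$ by induction on $p$, using the B\'ezout condition to arrange the needed cancellations among the $1/c_l$ denominators.

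For the ``only if'' direction, if $\sum_l f_l \eta_l^p = dV + \omega'$ with $\omega' \in F_{p-1}\X^1$, then Proposition~\ref{prop:h0key} reduces me to the case $V \in F_{p+1}U$ (if $V \in F_p U$ instead, then $dV \in F_{p-1}\X^1$, forcing $f_l = 0$ for all $l$ by the $K$-linear independence of $\tilde u_1^p, \tilde u_2^p, \tilde u_3^p$ in $\gr_p U \otimes_S K$, valid for $p \geq 1$; the case $p = 0$ is vacuous). Passing to leading symbols in $\gr_p \X^1$ yields $\bar d(\sigma) = \sum_l f_l \tilde u_l^p \xx_l$ with $\sigma \in \gr_{p+1}U$ the top symbol of $V$ and $\tilde u_l$ that of $u_l$. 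In the $\tilde u$-basis of $\gr_\bullet U \otimes_S K$ the Poisson differential acts simply as $\tilde u_l \mapsto c_l \xx_l$, so this equation has the unique solution $\sigma = \sum_l \frac{f_l}{(p+1)c_l}\, \tilde u_l^{p+1}$ over~$K$; the integrality of $\sigma$ in $\gr_{p+1}U$ then forces, by expanding the $\tilde u_l^{p+1}$ in the $\alpha$-basis, the system of membership conditions on $(f_l)$ that characterize the image of Saito's matrix. The main obstacle throughout is this integrality argument that ties the B\'ezout condition to the Saito image, which underlies both directions.
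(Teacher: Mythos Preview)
Your reduction of (ii) to (i) is fine, and your ``only if'' argument is correct, though more circuitous than necessary: once you know $\omega\equiv d^0(V)\bmod F_{p-1}\X^1$ with $V\in F_{p+1}U$, you can write $V\equiv\sum_{|K|=p+1}h^K\alpha^K$ and read off, via Proposition~\ref{prop:differentials}, that the $\alpha_3^p\xx_l$-coefficient of $d^0(V)$ is exactly $d^0\bigl(\Omega^0((p+1)h^{(p+1,0,0)},h^{(p,1,0)},h^{(p,0,1)})\bigr)$ evaluated at $\xx_l$. Since the $\alpha_3^p\xx_l$-coefficient of $\sum f_l\eta_l^p$ is $f_l$ (the $u_l^p$ have leading term $\alpha_3^p$), this immediately exhibits $\sum f_l\xx_l$ as a coboundary---no passage to the fraction field or uniqueness argument needed. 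This is precisely the paper's route.

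The real gap is in your ``if'' direction. You correctly identify that the integrality of $V_i^{(p)}=\sum_l\frac{\alpha_i(x_l)}{(p+1)c_l}u_l^{p+1}$ is the crux, but your proposed induction on~$p$ ``using the B\'ezout condition to arrange the needed cancellations'' is not an argument: there is no evident recursion relating $V_i^{(p)}$ to $V_i^{(p-1)}$, and expanding $u_l^{p+1}$ in the $\alpha$-basis produces a large sum whose denominators $c_l$ do not cancel in any transparent way. In fact, the integrality of these elements is essentially \emph{equivalent} to the statement you are trying to prove, so without an independent mechanism you are going in circles.

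The paper bypasses this entirely. Given $v=g^3\alpha_3+g^2\alpha_2+g^1\alpha_1$ with $d^0(v)=\sum f_l\xx_l$, it takes the manifestly integral element
\[
  w=\tfrac{g^3}{p+1}\alpha_3^{p+1}+g^2\alpha_3^p\alpha_2+g^1\alpha_3^p\alpha_1\in F_{p+1}U,
\]
observes via Proposition~\ref{prop:differentials} that $d^0(w)$ agrees with $\sum f_l\eta_l^p$ in the $\alpha_3^p\xx_l$-components modulo $F_{p-1}\X^1$, and then invokes Proposition~\ref{prop:A3:H1SU:Fp} to conclude that the difference---a cocycle with vanishing $\alpha_3^p$-components---is a coboundary modulo $F_{p-1}\X^1$. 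The B\'ezout condition thus enters only through Proposition~\ref{prop:A3:H1SU:Fp} (via Lemma~\ref{lem:A3:H1SU:F1}), not through any direct divisibility analysis of your $V_i^{(p)}$. You should use this element~$w$ and Proposition~\ref{prop:A3:H1SU:Fp} rather than attempt the integrality of~$V_i^{(p)}$.
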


\begin{proof}
Suppose that $\omega_0 = \sum_{l=1}^nf_l\xx_l$ is a coboundary and let $v\in
U$ such that $d^0(v) = \omega_0$. Thanks to Proposition~\ref{prop:h0key} 
we may assume that
$v\in F_1\X^1$ and write 
$v\equiv g^{3}\alpha_3+g^2\alpha_2+g^1\alpha_1\mod S$ for some
$g^{3},g^2,g^{1}\in S$.
In view of Proposition~\ref{prop:differentials} there exist
$f_0^{I} \in F_0\X^1$ such that
we may write
\begin{align*}
\MoveEqLeft
  d^0\left( 
    \frac{g^{3}}{p+1}\alpha_{3}^{p+1} 
    + g^2\alpha_{3}^p\alpha_{2}
    + g^1\alpha_{3}^p\alpha_{1}
  \right)
  \equiv
  d^0\left(v \right)\alpha^p_{3}
  + \sum_{i_3<p} f_0^{(i_{3},i_2,i_{1})} \alpha^{(i_{3},i_2,i_{1})}
  \mod F_{p-1}\X^1.
\end{align*}
It follows that the difference 
\(
  \omega -
  d^0\left( 
    \frac{g^{3}}{p+1}\alpha_{3}^{p+1} 
    + g^2\alpha_{3}^p\alpha_{2}
    + g^1\alpha_{3}^p\alpha_{1}
  \right)
\)
is a cochain whose components in
$\alpha_{3}^p\xx_1,\alpha_3^p\xx_2,\alpha_{3}^p\xx_N$ are zero. Applying
Proposition~\ref{prop:A3:H1SU:Fp} we see that $\omega$ is equivalent to a
coboundary modulo $F_{p-1}\X^1$. 

Reciprocally, let $u\in U$ such that $d^0(u)= \omega$.  Thanks to
Proposition~\ref{prop:h0key} we know that $u\in F_{p+1}U$: let us write $u
\equiv \sum_{\abs K = p+1}h^{K} \alpha^K$ with $\{h^{K}: K\in\NN^3, \abs K =
p+1\}\subset S$.
Taking into account Proposition~\ref{prop:differentials} again we see that
\[
  d^0(u)
  \equiv 
  d\left( \Omega^0\left( 
    (p+1)h^{(p+1,0,0)},h^{(p,1,0)},h^{(p,0,1)}
  \right) \right) \alpha^p_{3}
  + \sum_{i_{3}<p}  f_0^{(i_{3},i_2,i_{1})} \alpha^{(i_{3},i_2,i_{1})}
\]
modulo $F_{p-1}\X^1$ for some
$f_0^{I} \in F_0\X^1$. The equality of this to $\omega$ implies, looking at the
components in $\alpha_{3}^p\xx_1,\alpha_3^p\xx_2,\alpha_{3}^p\xx_3$, that
\(
  d\left( \Omega^0\left( 
    (p+1)h^{(p+1,0,0)},h^{(p,1,0)},h^{(p,0,1)}
  \right) \right)  
  = \sum_{l=1}^3 f_l\xx_l.
\)
This completes the proof of the first item.

Now, the truth of the first item implies two things: first, that the composition
$F_0\X^1\to F_p\X^1/F_{p-1}\X^1$ of $\gamma_p$ with the projection to the
quotient descends to cohomology and, second,
that the map induced in cohomology by this composition is a monomorphism. It
is also surjective thanks to
Corollary~\ref{coro:A3:H1generators}\ref{coro:A3:H1generators:these}.
\end{proof}

Recall that the filtered $S$-module $F_\*H^1(S,U)$ has a graded associated
$S$-module $\Gr_\*H^1(S,U) = \bigoplus_{p\geq0} \Gr_pH^1(S,U)$ given by
$\Gr_pH^1(S,U) \coloneqq F_pH^1(S,U)/F_{p-1}H^1(S,U)$. We have just
seen that $\Gr_pH^1(S,U)$ is isomorphic as an $S$-module to
$F_0H^1(S,U)$ for any $p\geq0$: we claim that we can make it an isomorphism 
of \emph{graded} $S$-modules.

Given $p\geq0$, the map $\gamma_p : F_0\X^1 \to  F_pX^1$ induces an
isomorphism of $S$-modules $F_0H^1(S,U)\cong \Gr_pH^1(S,U)$ that shifts the
polynomial degree in $3(p-1)$: indeed, for each $l\in\nset{3}$
the class of $\eta_l\in\X^1$, which has
polynomial degree $2$, is sent to the class of $\eta_l^p$, which has
polynomial degree $3p-1$.
On the other hand, the morphism of $S$-modules $\gamma:F_0(H^1(S,U))\otimes
\kk[\alpha_3]\to\Gr H^1(S,U)$ such that $[\eta_l]\otimes \alpha_3^p \mapsto 
[\eta_l^p]$ for $l\in\nset{3}$ and $p\geq0$ does respect the
graduation and is an isomorphism because so is each $\gamma_p$.
In addition to this, we observe that
\[
  F_0(H^1(S,U)) 
  = \frac{S\otimes\lin{\xx_1,\xx_2,\xx_3}}
    {Sd^0(\alpha_{1})+Sd^0(\alpha_2)+Sd^0(\alpha_{3})}
  \cong \coker M.
\]
We summarize our findings in the following statement.

\begin{Corollary}\label{coro:H1:result:graded}
Let 
$S=\kk[x_1,x_2,x_3]$ and  $L$ a free $S$-submodule 
of $\Der S$ generated by derivations $\alpha_1,\alpha_2,\alpha_3$
in such a way that  $(S,L)$ is a triangularizable Lie--Rinehart algebra that
satisfies the Bézout and orthogonality conditions.
Let~$U$ be the Lie–Rinehart enveloping algebra of~$L$.
There is an isomorphism of $S$-graded modules
\begin{align*}
  H^1(S,U)\cong \coker M \otimes \kk[\alpha_3],
\end{align*}
where $M$ is the Saito's matrix of $(S,L)$.
\end{Corollary}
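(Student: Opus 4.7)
The plan is to assemble the pieces that the paragraph immediately preceding the statement has essentially put in place, so only two observations need to be made explicit.

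First I would identify $F_0H^1(S,U)$ with $\coker M$. Because $S$ is commutative, the restriction of $d^0$ to $F_0\X^0=S$ is zero, hence every element of
\[
  F_0\X^1 = S\xx_1\oplus S\xx_2\oplus S\xx_3
\]
is a cocycle. Iterating Proposition~\ref{prop:h0key}, every coboundary contained in $F_0\X^1$ is of the form $d^0(u)$ with $u\in F_1U$, and since $d^0$ vanishes on $S$ this $S$-module of coboundaries is generated by $d^0(\alpha_1),d^0(\alpha_2),d^0(\alpha_3)$. A direct computation using the formula for $d^0$ recorded in Section~\ref{sec:h0} gives
\[
  d^0(\alpha_i) = \sum_{l=1}^{3}\alpha_i(x_l)\,\xx_l,
\]
whose coordinates form the $i$th row of Saito's matrix $M$. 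Therefore $F_0H^1(S,U)\cong\coker M$.

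Next I would invoke the graded $S$-module morphism
\[
  \gamma : F_0H^1(S,U)\otimes\kk[\alpha_3]\longrightarrow \Gr H^1(S,U),\qquad
  [\xx_l]\otimes\alpha_3^p \longmapsto [\eta_l^p],
\]
built in the paragraph just above the Corollary, and already known to be an isomorphism of graded $S$-modules: each $\gamma_p$ is an isomorphism by the preceding Proposition, and the polynomial-degree shift $3(p-1)$ induced by $\gamma_p$ is exactly matched by the degree of the factor $\alpha_3^p$, so $\gamma$ is graded on the nose.

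Combining these two observations yields the claimed isomorphism of $S$-graded modules. I do not anticipate any real obstacle in writing the proof: the substantive work has been done in Proposition~\ref{prop:A3:H1SU:Fp} and Corollary~\ref{coro:A3:H1generators}, where the generators $\eta_l^p$ and the vanishing criteria for cocycles modulo $F_{p-1}\X^1$ were established; once these are in hand, the assembly above is essentially a bookkeeping exercise which reads off the $F_0$-part as $\coker M$ and the filtration-quotient structure as free of rank one over $\kk[\alpha_3]$.
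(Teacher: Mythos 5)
Your proposal is correct and follows essentially the same route as the paper: the Corollary is proved there by exactly this assembly, namely identifying $F_0H^1(S,U)$ with $\coker M$ via the coboundaries $d^0(\alpha_i)=\sum_l\alpha_i(x_l)\xx_l$ and then using the maps $\gamma_p$ (equivalently the graded map $\gamma$ with values in $\Gr H^1(S,U)$) built from Proposition~\ref{prop:A3:H1SU:Fp} and Corollary~\ref{coro:A3:H1generators}. Your use of Proposition~\ref{prop:h0key} to see that every coboundary lying in $F_0\X^1$ comes from $F_1U$ is a small justification the paper leaves implicit, but it is the same argument.
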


Recall that the cokernel of $M$ has a rich algebraic structure ---
see M.\,Granger, D.\,Mond and M.\,Schulze's~\cite{schulze}.

%
\section{Computation of $\HH^1(U)$}
\label{sec:hh1}

The spectral sequence in Theorem~\ref{thm:spectral}, regardless of its
degeneracy, gives us an strategy to obtain the first Hochschild cohomology
space $\HH^\*(U)$ of the enveloping algebra $U$ of a Lie--Rinehart algebra
$(S,L)$: indeed, $\HH^1(U)$ is isomophic to
$H^1_S(L,H^0(S,U))\oplus  H^0_S(L,H^1(S,U))$. 
In Sections~\ref{sec:h0} and~\ref{sec:H1SU} 
we computed $H^0(S,U)$
and $H^1(S,U)$ when $(S,L)$ is as in 
Corollary~\ref{coro:H1:result:graded}, which is
the conclusion of Section~\ref{sec:H1SU}. 
In this section we describe their $L$-module structure and use
it to compute their respective Lie--Rinehart cohomology spaces
for the case in which $(S,L)$ is associated to a hyperplane arrangement
of the form $\A_r=\A(C_r\wr\SS_3)$ as in Example~\ref{ex:Ar}.
%
%
\subsection{The $L$-module structure on \texorpdfstring{$H^\*(S,U)$}{H(S,U)}}
\label{subsec:H1SU:actionofL}

Let $(S,L)$ be a Lie-Rinehart pair with enveloping algebra $U$.
Let us  describe the construction in~\cite{kola} that gives
 an $L$-module structure to the Hochschild cohomology $H^\*(S,U)$ of
$S$ with values on $U$.

Fix $\alpha\in L$ and an $S^e$-projective resolution $\varepsilon:P_\*\to S$.
Let $\alpha_\*$ be an $\alpha^e_S$-lifting of
$\alpha_S:S\to S $ to~$P_\*$, that is, a morphism of complexes
$\alpha_\*=(\alpha_{q}:P_q\to P_q)_{q\geq0}$ such that
$\varepsilon\circ\alpha_0= \alpha_S\circ \varepsilon$ and for each $q\geq0$,
$s$, $t\in S$ and $p\in P_q$
\[
  \alpha_q( ( s\otimes t)\cdot p)
  =\left( \alpha_S(s)\otimes t + s\otimes\alpha_S(t) \right)\cdot p 
  +(s\otimes t)\cdot p.
\]
The endomorphism $\alpha_\*^\sharp$ of $\Hom_{S^e}(P_\*,U)$, defined for each
$q\geq0$  to be
\begin{align}\label{eq:alphasharp}
  \alpha^\sharp_q (\phi): p \mapsto
  [\alpha,\phi(p)] - \phi\circ\alpha_q (p)
  \quad\text{whenever $\phi\in\Hom_{S^e}(P_q,U)$ and $p\in P_q$,}
\end{align}
allows us to define the map~$\nabla_\alpha^\*:H^\*(S,U)\to H^\*(S,U)$ as
the unique graded endomorphism such that 
\[\label{eq:alphanabla}
  \nabla_\alpha^q([\phi]) = [\alpha_q^\sharp(\phi)],
\]
where [-] denotes class in cohomology. The final result is that 
$\alpha\mapsto \nabla^q_\alpha$ defines an $L$-module structure on~$H^q(S,U)$
for each $q\geq0$.

\subsection{The liftings}
From now on we work on the Lie--Rinehart algebra associated to $\A_r$ and 
put $E\coloneqq \alpha_1$, $D\coloneqq \alpha_2$
and $C\coloneqq \alpha_3$.
The commuting relations in $L$ are determined by the rules
\[\label{eq:A_r:commuting}
\begin{aligned}
  & [E,C] = (2r+1)C,
  &&[E,D] = (r+1)D,
  \\
  & [D,C] = r(x_3^r+x_2^r-x_1^r),
\end{aligned}
\]
as a straightforward calculation shows.

\begin{Proposition}\label{prop:liftings}
The rules 
\begin{align}\label{eq:Dlifting}
  & D_1(1|x_1|1) = 0, \\
  & D_1(1|x_k|1) 
  = \sum_{s+t=r}x_k^s|x_k|x_k^t 
    -\sum_{s+t=r-1}x_k^s|x_1|x_1^tx_k
    - x_1^r|x_k|1
  \quad\text{if $k=2,3$}  
\end{align}
define a $D^e$-lifting of $D:S\to S$.
\end{Proposition}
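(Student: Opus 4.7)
The plan is to reduce to a direct telescoping computation on the $S^e$-module generators of $P_1 = S^e\otimes W$. Since $P_1$ is free as a left $S^e$-module on $\{1|x_k|1 : k\in\nset{3}\}$, the Leibniz-type rule
\[
  D_1((s|t)\cdot p) = (D(s)|t + s|D(t))\cdot p + (s|t)\cdot D_1(p)
\]
from Subsection~\ref{subsec:H1SU:actionofL} extends any prescription on these generators uniquely to a $\kk$-linear map $P_1 \to P_1$ of the required type; so the stated formula does define a well-defined $D_1$, and what remains is to check that $D_\*$ is a chain map, namely $b_1\circ D_1 = D_0\circ b_1$ on each $1|x_k|1$, where $D_0:S^e\to S^e$ is given by $D_0(s|t) = D(s)|t + s|D(t)$.

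For $k=1$ both sides vanish trivially: $D_1(1|x_1|1)=0$ by the formula, while $D(x_1)=\alpha_2(x_1)=0$ by triangularity kills the image of $b_1(1|x_1|1)=x_1|1-1|x_1$ under $D_0$. For $k\in\{2,3\}$ the target is
\[
  D_0(b_1(1|x_k|1))= (x_k^{r+1}-x_1^r x_k)|1 - 1|(x_k^{r+1}-x_1^r x_k),
\]
using $D(x_k)=x_k(x_k^r-x_1^r)$ from Example~\ref{ex:Ar}. I would then apply $b_1(a|x_j|c)=ax_j|c - a|x_jc$ summand by summand to the proposed $D_1(1|x_k|1)$ and identify two telescopings: the first sum $\sum_{s+t=r}x_k^s|x_k|x_k^t$ collapses to $x_k^{r+1}|1 - 1|x_k^{r+1}$, accounting for the $x_k^{r+1}$-part; the remaining two groups of terms should together produce $-(x_1^r x_k|1 - 1|x_1^r x_k)$, accounting for the $-x_1^r x_k$-part.

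The second telescoping will be the key technical step. The $b_1$-image of the indexed sum with middle factor $x_1$ by itself produces the wanted $1|x_1^r x_k$ boundary together with a residual piece of the form $x_1^r|x_k$; the correction term $x_1^r|x_k|1$ is designed precisely so that its $b_1$-image, namely $x_1^r x_k|1 - x_1^r|x_k$, absorbs the residual and supplies the missing $x_1^r x_k|1$. The main obstacle lies in tracking the indexing so that the cancellations in the second telescoping occur exactly: once this bookkeeping is confirmed, combining the two contributions gives $D_0(b_1(1|x_k|1))$, and the verification is complete.
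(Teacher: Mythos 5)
Your overall strategy is the same as the paper's proof: both verify the chain-map identity $b_1\circ D_1=D_0\circ b_1$ on the generators $1|x_k|1$ by a telescoping computation, and your treatment of $k=1$ and of the first sum is exactly the paper's. The difficulty is that you defer the decisive step --- your ``second telescoping'' --- as bookkeeping to be confirmed, and that is precisely where the argument does not go through with the formula as displayed. With the middle sum $\sum_{s+t=r-1}x_k^s|x_1|x_1^tx_k$ one gets $b_1(x_k^s|x_1|x_1^tx_k)=x_k^sx_1|x_1^tx_k-x_k^s|x_1^{t+1}x_k$, and these terms do not cancel in pairs, since the left-hand tensor factors $x_k^sx_1$ and $x_k^{s'}$ are distinct monomials: nothing telescopes. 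Concretely, for $r=2$ and $k=2$ the difference $b_1D_1(1|x_2|1)-\bigl(D(x_2)|1-1|D(x_2)\bigr)$ equals $x_1^2|x_2+x_2|x_1x_2-x_1|x_1x_2-x_1x_2|x_2\neq0$, so the displayed rules are not a chain map when $r\geq2$.

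The mechanism you describe --- the sum with middle factor $x_1$ produces the wanted $1|x_1^rx_k$ together with a residual $x_1^r|x_k$, which the correction term $x_1^r|x_k|1$ absorbs --- is correct, but only for the sum $\sum_{s+t=r-1}x_1^s|x_1|x_1^tx_k$, whose $b_1$-image telescopes to $x_1^r|x_k-1|x_1^rx_k$; with that reading your computation closes exactly as you outline, and it reproduces the paper's one-line verification. This is in fact the expression the paper itself uses later, in the proof of Proposition~\ref{prop:Dsharp1}, so the $x_k^s$ in the statement is best read as a typo (the two versions coincide when $r=1$, where $s$ only takes the value $0$). In short: same approach as the paper, but the step you flagged as the main obstacle is not mere bookkeeping --- carrying it out forces the correction of the outer factor in the middle sum, and your proof is complete only once that is made explicit.
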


\begin{proof}
It is evident that $d_1\circ D_1$ and $D_0\circ d_1$ coincide at $1|x_1|1$; if
$k=2,3$ then 
$d_1\circ D_1(1|x_k|1) $ is
\[
\!\begin{multlined}[.85\displaywidth] 
  \sum_{s+t=r}x_k^s(x_k|1-1|x_k)x_k^t
    -\sum_{s+t=r-1}x_k^s(x_1|1 - 1 |x_1) x_1^tx_k
    - x_1^rx_k|1 + x_1^r|x_k
  \\
  = \left( x_k^{r+1} - x_kx_1^r \right)|1 
    - 1|\left( x_k^{r+1} - x_kx_1^r \right),
\end{multlined}
\]
which equals $D_0\circ d_1(1|x_k|1) = D_0(x_k|1-1|x_k) = D(x_k)|1-1|D(x_k)$
because $x_k(x_k^r-x_1^r)$ is~$D(x_k)$.
\end{proof}

\begin{Proposition}\label{prop:Dsharp1}
For every $p\geq0$ we have that 
\begin{align}
\label{eq:Dsharp1}
  &D_1^\sharp(\eta_1^p)
  \equiv
  pr(x_3^r+x_2^r-x_1^r)\eta_1^p
    + rx_1^{r-1}x_2\eta_2^p 
    + rx_1^{r-1}x_3\eta_3^p\mod F_{p-1}\X^1 \mod \im d^0
  \\
  &D_1^\sharp ( \eta_2^p)
  \equiv \left( (1-p)x_1^r +(p-r-1)x_2^r + px_3^r  \right) \eta_2^p
  \mod F_{p-1}\X^1, 
  \\  
  & D_1^\sharp ( \eta_3^p)
  \equiv \left( (1-p)x_1^r+px_2^r + (p-r-1)x_3^r  \right) \eta_3^p
  \mod F_{p-1}\X^1. 
\end{align}
\end{Proposition}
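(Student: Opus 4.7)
The plan is to expand the definition~\eqref{eq:alphasharp} on the basis $(1|x_j|1)_{j=1,2,3}$ of the Koszul module $P_1$, applied to each cocycle $\eta_k^p = u_k^p \xx_k$, and then reduce modulo $F_{p-1}\X^1$ (and, in the $k=1$ case, additionally modulo $\im d^0$). Since $\eta_k^p(1|x_j|1) = \delta_{jk}\,u_k^p$, we have
\[
    D_1^\sharp(\eta_k^p)(1|x_j|1) = \delta_{jk}\,[D, u_k^p] - \eta_k^p\bigl(D_1(1|x_j|1)\bigr).
\]
The explicit formula for $D_1$ in Proposition~\ref{prop:liftings}, combined with the observation that $\eta_k^p$ selects only terms with $x_k$ in the middle slot, shows that the only nonzero evaluations are
\begin{align*}
    \eta_k^p\bigl(D_1(1|x_k|1)\bigr) &= \sum_{s+t=r}x_k^s u_k^p x_k^t - x_1^r u_k^p \quad (k = 2, 3), \\
    \eta_1^p\bigl(D_1(1|x_j|1)\bigr) &= -\sum_{s+t=r-1} x_j^s u_1^p x_1^t x_j \quad (j = 2, 3).
\end{align*}

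The principal ingredient is the commutator $[D, u_k^p]$ modulo $F_{p-1}U$. The iterated Leibniz rule $[D, u_k^p] = \sum_{i=0}^{p-1} u_k^{p-1-i}[D, u_k]\,u_k^i$, combined with the fact that elements of $S$ commute with powers of $u_k$ up to lower filtration, yields
\[
    [D, u_k^p] \equiv p\,[D, u_k]\,u_k^{p-1} \pmod{F_{p-1}U}.
\]
Each building block $[D, u_k]$ is then computed directly from the Lie--Rinehart commutation rules~\eqref{eq:A_r:commuting} and the identity $[D, fG] = D(f)G + f[D,G]$, valid for $f \in S$ and $G \in U$. For $k = 2, 3$ a short calculation shows that $[D, u_k]$ is an $S$-multiple of $u_k$ itself, so $[D, u_k^p]$ reduces to an $S$-multiple of $u_k^p$ modulo $F_{p-1}U$. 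Together with the reduction $\sum_{s+t=r}x_k^s u_k^p x_k^t \equiv (r+1) x_k^r u_k^p \pmod{F_{p-1}U}$, collecting coefficients produces the formulas for $D_1^\sharp(\eta_2^p)$ and $D_1^\sharp(\eta_3^p)$.

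I expect the main obstacle to be the case $\eta_1^p$. The element $u_1 = C - (x_3^r - x_1^r)D + (x_3^r - x_1^r)(x_2^r - x_1^r)E$ blends all three basis derivations, so $[D, u_1]$ is not merely an $S$-multiple of $u_1$; contributions along $C$ and $E$ appear and must be re-expressed in the orthogonal basis $\{u_1, u_2, u_3\}$, which is exactly how $\eta_2^p$ and $\eta_3^p$ enter the right-hand side of the stated formula. Moreover, the sums $\sum_{s+t=r-1}x_j^s u_1^p x_1^t x_j$ collapse modulo $F_{p-1}U$ to $x_j\bigl(x_j^{r-1} + x_j^{r-2} x_1 + \cdots + x_1^{r-1}\bigr) u_1^p$, which does not agree with the claimed coefficient $r x_1^{r-1} x_j u_1^p$ for $r \geq 2$. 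The remaining step is to construct an element $v \in U$, built from polynomial combinations of the $x_i$'s and products of the $\alpha_i$'s, whose coboundary $d^0(v)$ exactly absorbs the discrepancy across all three $\xx_l$-components simultaneously; this is the technical crux of the proof and the reason the $\eta_1^p$ congruence is stated modulo $\im d^0$ in addition to $F_{p-1}\X^1$.
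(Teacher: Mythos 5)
Your treatment of $\eta_2^p$ and $\eta_3^p$ follows the paper's route and is essentially right: $[D,u_k^p]\equiv p\,[D,u_k]u_k^{p-1}$ modulo $F_{p-1}U$, the commutator $[D,u_k]$ is an $S$-multiple of $u_k$ for $k=2,3$, and the evaluation of the lifting terms gives the stated coefficients after collecting. The problems are concentrated in the case of $\eta_1^p$. First, the ``discrepancy'' you detect in the $\xx_2,\xx_3$-components is misdiagnosed. With $x_j^s$ in the first tensor slot of the middle sum, the formula of Proposition~\ref{prop:liftings} fails to satisfy $d_1\circ D_1=D_0\circ d_1$ as soon as $r\geq2$ (the middle sum does not telescope), so that $D_1$ would not be a lifting at all and $D_1^\sharp$ would neither preserve cocycles nor compute $\nabla_D$; the correct lifting has $\sum_{s+t=r-1}x_1^s|x_1|x_1^tx_j$, which is what the paper's proof actually uses, and with it the $\xx_j$-component of $D_1^\sharp(\eta_1^p)$ is exactly $rx_1^{r-1}x_ju_1^p$ modulo $F_{p-1}U$, with no correction needed. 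Moreover your proposed repair cannot work: a mismatch of the form $x_j\left(\sum_{s+t=r-1}x_j^sx_1^t-rx_1^{r-1}\right)$ sitting in the top components $\alpha_3^p\xx_2$ and $\alpha_3^p\xx_3$ is never a coboundary modulo $F_{p-1}\X^1$, since by the proposition following Corollary~\ref{coro:A3:H1generators} (and a degree count) this would force the $\xx_2$- and $\xx_3$-coefficients to be $hx_2$ and $hx_3$ for one and the same $h\in S_{r-1}$, which fails for $r\geq2$.

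Second, and this is the genuine gap, the real difficulty of the $\eta_1^p$ case --- that $[D,u_1]$ equals $r(x_3^r+x_2^r-x_1^r)C$ plus terms along $D$ and $E$, so the $\xx_1$-component $[D,u_1^p]\equiv pu_1^{p-1}[D,u_1]$ is not an $S$-combination of $u_1^p,u_2^p,u_3^p$ --- is only described, not handled: no element $v$ is constructed and no mechanism for producing it is given, and a direct re-expression in the basis $\{u_1,u_2,u_3\}$ does not yield the stated right-hand side. The paper's mechanism is Proposition~\ref{prop:A3:H1SU:Fp}: the difference $D_1^\sharp(\eta_1^p)-pr(x_3^r+x_2^r-x_1^r)\eta_1^p-rx_1^{r-1}x_2\eta_2^p-rx_1^{r-1}x_3\eta_3^p$ is a cocycle in $F_p\X^1$ whose components in $\alpha_3^p\xx_1$, $\alpha_3^p\xx_2$, $\alpha_3^p\xx_3$ all vanish (the $\alpha_3^p$-coefficient of each $u_l^p$ is $1$, and that of $pu_1^{p-1}[D,u_1]$ is $pr(x_3^r+x_2^r-x_1^r)$), hence it is cohomologous to an element of $F_{p-1}\X^1$; this is precisely why the first congruence carries the extra ``$\mathrm{mod}\ \im d^0$''. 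Without this step, or an explicit construction replacing it, the first formula is not proved.
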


\begin{proof}
Recall from Corollary~\ref{coro:A3:H1generators}
the cocycle $\eta_l^p=u_l^p\xx_l$
for each  $l\in\nset{3}$ that is such that $u_l$ commutes with every
$x_j$ with $j\neq l$. The commuting relations~\eqref{eq:A_r:commuting}
in $L$ give 
\begin{align}
  [D,u_3] 
  &= [D,C] = r(x_3^r+x_2^r-x_1^r)C
  = r(x_3^r+x_2^r-x_1^r)u_3
\shortintertext{and}
  [D,u_2] 
  &= [D,C-(x_3^r-x_2^r)D] 
  \\
  &
  = r(x_3^r+x_2^r-x_1^r)C 
    - \left(rx_3^{r}(x_3^r-x_1^r) - rx_2^{r}(x_2^r-x_1^r)\right)D
  \\
  &= r(x_3^r+x_2^r-x_1^r)\left( C-(x_3^r-x_2^r)D \right) 
  =r(x_3^r+x_2^r-x_1^r)u_2.
\end{align}
It follows that if $l=2,3$
then $[D,u_l^p]\equiv p(x_3^r+x_2^r-x_1^r)u_l^p$ modulo $F_{p-1}U$. We can now
compute
\begin{align*}
\MoveEqLeft
  D_1^\sharp(\eta_l^p)(1|x_1|1)
  = [D,\eta_l^p(1|x_1|1)] -\eta_l^p\circ D_1(1|x_1|1)  
  = [D,0] - \eta_l^p(0)  = 0  ;
  \\
\MoveEqLeft
  D_1^\sharp(\eta_l^p)(1|x_l|1)
  = [D,\eta_l^p(1|x_l|1)] -\eta_l^p\circ D_1(1|x_l|1) 
  \\
  &= 
    [D,u_l^p] - \eta_l^p
      \left( 
        \sum_{s+t=r}x_l^s|x_l|x_l^t 
        -\sum_{s+t=r-1}x_l^s|x_1|x_1^tx_l
        - x_1^r|x_l|1 
      \right) 
  \\
  &\equiv  
    p(x_3^r+x_2^r-x_1^r)u_l^p
      - \left( (r+1)x_l^r - x_1^r  \right) u_l^p
  \mod F_{p-1}U
\intertext{and for $m\neq 1,l$}
\MoveEqLeft
   D_1^\sharp(\eta_l^p)(1|x_m|1)
  = [D,\eta_l^p(1|x_m|1)] - \eta_l^p\circ D_1(1|x_m|1) 
  \\
  &= [D,0]  - \eta_l^p
      \left( 
        \sum_{s+t=r}x_m^s|x_m|x_m^t 
        -\sum_{s+t=r-1}x_1^s|x_1|x_1^tx_m
        - x_1^r|x_m|1 
      \right)   
  = 0.
\end{align*}
With this information at hand we are able to see that
\begin{align*}
  &D_1^\sharp ( \eta_2^p)
  \equiv \left( (1-p)x_1^r +(p-r-1)x_2^r + px_3^r  \right) \eta_2^p
  \mod F_{p-1}\X^1, 
  \\  
  & D_1^\sharp ( \eta_3^p)
  \equiv \left( (1-p)x_1^r+px_2^r + (p-r-1)x_3^r  \right) \eta_3^p
  \mod F_{p-1}\X^1. 
\end{align*}
Let us now consider the action of $D$ on $\eta_1^p$. To begin with, we have
\begin{align}
\MoveEqLeft[2]  
  [D,u_1^p] 
  \equiv pu_1^{p-1}[D,u_1] \mod F_{p-1}U 
  \\
  &= pu_1^{p-1}[D,(C- (x_3^r-x_1^r)D + (x_3^r-x_1^r)(x_2^r-x_1^r)E)] 
  \\
  &\!\begin{multlined}[.8\displaywidth]
  = pu_1^{p-1}
    \Big( 
      r(x_3^r+x_2^r-x_1^r)C 
      - r x_3^r(x_3^r-x_1^r)D 
      \\
      + D\left( (x_3^r-x_1^r)(x_2^r-x_1^r) \right)E 
      - (r+1)(x_3^r-x_1^r)(x_2^r-x_1^r)D
    \Big)
  \end{multlined}
\end{align}
and we observe that 
\(
  D_1^\sharp(\eta_1^p)(1|x_1|1)
  = [D,u_1^p] -\eta_1^p(D_1(1|x_1|1))  
  = [D,u_1^p].
\)
On the other hand, if $m\in\{2,3\}$ then 
\begin{align*}
\MoveEqLeft
  D_1^\sharp(\eta_1^p)(1|x_m|1)
  = [D,\eta_1^p(1|x_m|1)] -\eta_1^p(D_1(1|x_m|1))  
  \\
  &= [D,0] - \eta_1^p
      \left( 
        \sum_{s+t=r}x_m^s|x_m|x_m^t 
        - \sum_{s+t=r-1}x_1^s|x_1|x_1^tx_m
        - x_1^r|x_m|1 
      \right)
  \\
  &\equiv
    rx_1^{r-1}x_mu_1^p.    
\end{align*}
From these computations we
see that the cocycle 
\[
  D_1^\sharp(\eta_1^p) 
  - pr(x_3^r+x_2^r-x_1^r)\eta_1^p
  - rx_1^{r-1}x_2\eta_2^p 
  - rx_1^{r-1}x_3\eta_3^p 
\]
has component zero in $C^p\xx_1$, $C^p\xx_2$ and $C^p\xx_3$, and then
Proposition~\ref{prop:A3:H1SU:Fp} tells us that
$ D_1^\sharp(\eta_1^p)$ is cohomologous modulo $ F_{p-1}\X^1$ to
$pr(x_3^r+x_2^r-x_1^r)\eta_1^p
    + rx_1^{r-1}x_2\eta_2^p 
    + rx_1^{r-1}x_3\eta_3^p$.
\end{proof}

\subsection{Invariants of $H^1(S,U)$ by the action of $L$}

We already have explicit descriptions of $H^1(S,U)$, in Section~\ref{sec:H1SU},
and of
the action of $L$ thereon, in Subsection~\ref{subsec:H1SU:actionofL} above:
the next step is to calculate the intersection of the kernels of the actions
of $E$, $D$ and $C$ on $H^1(S,U)$.

\begin{Proposition}\label{prop:H0(H1)}
$H^0_S(L,H^1(S,U))=0$.
\end{Proposition}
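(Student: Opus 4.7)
The plan is to argue by contradiction: suppose $\omega \in H^0_S(L,H^1(S,U))$ is nonzero and derive a contradiction by examining its leading term in the filtration.

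First I would set up the leading-term apparatus. By Corollary~\ref{coro:H1:result:graded} the cohomology $H^1(S,U)$ carries an increasing filtration $F_\bullet$ with associated graded $\coker M \otimes \kk[\alpha_3]$, freely generated as an $S$-module in degree $p$ by the classes of $\eta_1^p,\eta_2^p,\eta_3^p$. Proposition~\ref{prop:Dsharp1} shows that $\nabla_D$ preserves this filtration, and analogous (easy) computations give the same for $\nabla_E$ and $\nabla_C$. Hence if $\omega\neq0$ we may choose the largest $p$ such that the image $\bar\omega\in\Gr_p H^1(S,U)$ is nonzero, and represent $\bar\omega$ by $[f_1\eta_1^p+f_2\eta_2^p+f_3\eta_3^p]$ with some $f_l\not\equiv 0$ modulo the columns of $M$.

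Next I would impose the invariance $\nabla_D(\omega)=0$. Using the Lie--Rinehart Leibniz rule $\nabla_D(s\cdot m)=D(s)m+s\nabla_D(m)$ together with Proposition~\ref{prop:Dsharp1}, reducing modulo $F_{p-1}$ yields three congruences in $\coker M$:
\begin{align*}
  &D(f_1)+pr(x_3^r+x_2^r-x_1^r)f_1\equiv 0,\\
  &D(f_2)+rx_1^{r-1}x_2\,f_1+\bigl((1{-}p)x_1^r+(p{-}r{-}1)x_2^r+px_3^r\bigr)f_2\equiv 0,\\
  &D(f_3)+rx_1^{r-1}x_3\,f_1+\bigl((1{-}p)x_1^r+px_2^r+(p{-}r{-}1)x_3^r\bigr)f_3\equiv 0.
\end{align*}
I would then couple these with the $\nabla_E$-congruences. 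Since $E$ is the Euler derivation and $u_1,u_2,u_3$ are all $E$-homogeneous of weight $2r+1$, the action of $\nabla_E$ on each $\eta_l^p$ is (up to a correction of lower filtration) scalar multiplication by a weight $w(p)$, so $E$-invariance forces each $f_l$ to be a homogeneous polynomial of a specific degree determined by $p$. This reduces the problem on each graded piece to a finite-dimensional question.

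The main obstacle is the last step: showing that the $D$-congruences together with the $E$-degree constraint, read inside the finitely many homogeneous components of $\coker M$, admit only the zero solution. The relations presenting $\coker M$ are the three rows of Saito's matrix,
\begin{equation*}
  x_1\xx_1+x_2\xx_2+x_3\xx_3=0,\quad
  x_2(x_2^r-x_1^r)\xx_2+x_3(x_3^r-x_1^r)\xx_3=0,\quad
  x_3(x_3^r-x_2^r)(x_3^r-x_1^r)\xx_3=0,
\end{equation*}
and the Bézout hypothesis is what guarantees enough coprimality among the coefficients to propagate cancellations through the congruences. Once this vanishing $f_1\equiv f_2\equiv f_3\equiv 0\bmod M$ is established, the leading piece $\bar\omega$ is zero, contradicting the maximality of $p$; this contradiction gives $\omega=0$ and hence $H^0_S(L,H^1(S,U))=0$. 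If the $\nabla_D$ and $\nabla_E$ information turns out not to suffice in some residual case (small $p$, the exceptional row involving $x_1^{r-1}x_l f_1$), I would bring in $\nabla_C$ by running the analogue of Proposition~\ref{prop:liftings} and Proposition~\ref{prop:Dsharp1} for $C$ to pin down the remaining coefficients.
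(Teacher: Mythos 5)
Your outline assembles the same ingredients as the paper's argument --- the $\nabla_E$-action cuts the invariants down to finitely many homogeneous components, the formulas of Proposition~\ref{prop:Dsharp1} give congruences in $\coker M$, and the conclusion should be that these congruences have only the trivial solution --- but the decisive step is exactly the one you label ``the main obstacle'' and do not carry out. Showing that $f_1\equiv f_2\equiv f_3\equiv 0$ modulo the Saito relations is not a formal consequence of the Bézout coprimality (which in this paper is used earlier, to establish the structure of $H^1(S,U)$ itself); in the paper it is done by an explicit coefficient analysis: one must first write out the ambiguity coming from the relations, i.e.\ allow an arbitrary multiple $g\,d^0(E)+\mu\,d^0(D)$ as in \eqref{eq:H1SU-noDetas} and \eqref{eq:H1SU-Detas}, and then compare coefficients of specific monomials ($x_2^{r+1}$, $x_3^{r+1}$, $x_1^{r+1}$, \dots) to force $g$, $\mu$ and the $f_{i,j}$ to vanish. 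Your sketch never introduces these relation multipliers, so even the statement of the system you would have to solve is not yet on the page; asserting that ``coprimality propagates cancellations'' is not a proof of it.

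A second, related gap is the residual case you defer to ``bringing in $\nabla_C$.'' After restricting to $\ker\nabla_E$ the surviving candidates depend delicately on the weights of the $u_k$ (note the paper takes $\abs{u_k}=\abs{\alpha_3}=2r$, not $2r+1$), and for $r=1$ extra invariant candidates involving the $\eta_l$ do survive; the paper disposes of them by a separate computation with $\nabla_D$ (the $\lambda_i$, $\mu_1$ bookkeeping in the second half of the proof), not with $\nabla_C$. So the exceptional case is real, it is where part of the work lies, and your proposal leaves it unresolved. In short: the strategy is essentially the paper's (the leading-term induction along the order filtration is an inessential repackaging of the reduction to the degree-zero component), but the heart of the proof --- the explicit vanishing argument in $\coker M$, including the $r=1$ case --- is missing.
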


\begin{proof}
Recall that the polynomial grading on $S$ induces a grading on $S$, on $U$ and
on the cohomology $H^\*(S,U)$.
Since the derivation $E$ induces the linear endomorphism $\nabla_E^1$ of
$H^1(S,U)$ that sends the class of an homogeneous element $a$ of degree
$\abs{a}$ to the class of $\abs{a}a$ it follows that
\(
  \ker \nabla^1_E = H^1(S,U)_0,
\)
where $H^1(S,U)_0$ is the subspace of $ H^1(S,U)$ formed by elements of degree
zero.
Remember that if $k\in\nset{3}$ then
$\abs{u_k} = \abs{\alpha_3} = 2r$, and therefore $\abs{\eta_k} = 2r-1$. 
In view of our calculation in Corollary~\ref{coro:H1:result:graded}
this means that
\[
  \ker \nabla^1_E 
  = H^1(S,U)_0
  \cong
  \begin{cases*}
    \frac{ S_1\otimes\lin{\xx_1,\xx_2,\xx_3} }{\kk(x_1\xx+x_2\xx_2+x_3\xx_3) }
      \oplus \lin{\eta_1,\eta_2,\eta_3}
    & if $r=1$;
    \\
    \frac{ S_1\otimes\lin{\xx_1,\xx_2,\xx_3} }{\kk(x_1\xx+x_2\xx_2+x_3\xx_3) }
    & if $r\geq2$.
  \end{cases*}
\]

We begin by supposing that $r\geq2$. We observe that if $f_1,f_2,f_3\in S_1$
then
\begin{align*}
\MoveEqLeft[2]
  D_1^\sharp \left(  \sum f_i\xx_i  \right)
  = \sum \left( D(f_i)\xx_i + f_iD_1^\sharp(\xx_i) \right)
  \\
  &\!\begin{multlined}[.8\displaywidth] 
    = \sum  D(f_i)\xx_i
      + f_1r\left(  x_1^{r-1}x_2\xx_2 +  x_1^{r-1}x_3\xx_3 \right)
      + f_2 (x_1^r-(r+1)x_2^r)\xx_2
      \\
      + f_3 (x_1^r-(r+1)x_3^r)\xx_3
  \end{multlined}
  \\
  &\!\begin{multlined}[.8\displaywidth] 
    =  D(f_1) \xx_1
      + \left( D(f_2) + f_1rx_1^{r-1}x_2 + f_2(x_1^r-(r+1)x_2^r)\right)\xx_2
    \\
      + \left( D(f_3) + f_1rx_1^{r-1}x_3 + f_3(x_1^r-(r+1)x_3^r)\right)\xx_3 
  \end{multlined}
\end{align*}
belongs to the homogeneous component of degree $r$ of 
$F_0H^1(S,U)$, which is precisely 
\[
  \left( 
    \frac{S\otimes\lin{\xx_1,\xx_2,\xx_3}}
      {Sd^0(E)+Sd^0(D)+Sd^0(C)} 
  \right)_r
  = \frac{S_{r+1}\otimes\lin{\xx_1,\xx_2,\xx_3}}
    {S_rd^0(E)+\kk d^0(D)}.
\]
It follows that if
\(
  \nabla_D^1\left( [ \sum f_i\xx_i ] \right) 
  = \left[D_1^\sharp \left(  \sum f_i\xx_i  \right)\right]
\)
is zero in cohomology there must exist $g\in S_r$ and $\mu\in\kk$ such that
\[\label{eq:H1SU-noDetas}
  D_1^\sharp \left(  \sum f_i\xx_i  \right)
  = g(x_1\xx_1 + x_2\xx_2+ x_3\xx_3 )
    + \mu\left( x_2(x_2^r-x_1^r)\xx_2 - x_3(x_3^r-x_1^r)\xx_3 \right).
\]
Let us write $f_i = f_{i,1}x_1 + f_{i,2}x_2 + f_{i,3}x_3$ with $f_{i,j}\in\kk$
for $i,j\in\nset{3}$.
Up to the addition of coboundary that is a scalar multiple of 
$d^0(E) = x_1\xx_1+x_2\xx_2+x_3\xx_3$ we may suppose that $f_{1,1}=0$.
In $\xx_1$ we have $D(f_1) =gx_1$, or, in other words, 
\[
  f_{1,2}(x_2^{r+1}-x_1x_2^r) + f_{1,3}(x_3^{r+1}-x_1x_3^r)
  = gx_1.
\]
The components in $x_2^{r+1} $ and in $x_3^{r+1}$ of this equality read
$f_{1,2}= 0$ and $f_{1,3}=0$: this implies that $g=0$, and of course that
$f_1=0$.
Next, equation~\eqref{eq:H1SU-noDetas} in $\xx_2$ yields the equality in
$S_{r+1} $
\[
  D(f_2) + f_2(x_1^r-(r+1)x_2^r)
  =\mu( x_2(x_2^r-x_1^r)).
\]
In $x_1^{r+1}$ and $x_3^{r+1}$ we have $f_{2,1}=0$ and $f_{2,3}=0$,
and what remains is 
$ -rf_{2,2}x_2^{r+1}=\mu( x_2(x_2^r-x_1^r)$. It follows that $\mu=0$
and therefore $f_2=0$; analogously, $f_3=0$.
We conclude that $\ker\nabla_D^1\vert_{H^1(S,U)_0}=0$ when $r\geq2$.

\bigskip

Let us now suppose that $r=1$ and compute the kernel of the restriction of
$\nabla^1_D$ to $ H^1(S,U)_0 $. Let then 
$f_1,f_2,f_3\in S $  and $\lambda_1,\lambda_2,\lambda_3$ be such that
$\nabla_D^1\left( [ \sum f_i\xx_i + \lambda_i\eta_i] \right)$ 
is zero in cohomology. Since 
\[
\!\begin{multlined}[.85\displaywidth] 
  H^1(S,U)_1
  \cong 
  \frac{ S_2\otimes\lin{\xx_1,\xx_2,\xx_3} }
    {S_1(x_1\xx+x_2\xx_2+x_3\xx_3) + \kk(x_2(x_2-x_1)\xx_2+x_3(x_3-x_1)\xx_3)}
  \\
  \oplus
  \frac{ S_1 \lin{\eta_1,\eta_2,\eta_3}}{ \kk(x_1\eta_1+x_2\eta_2 + x_3\eta_3)
}
  \oplus \lin{\eta_1^2,\eta_2^2,\eta_3^2}
\end{multlined}
\]
there exist 
$\mu_1,\mu_2\in\kk$ and  $g\in S_1$ such that
\[\label{eq:H1SU-Detas}
\!\begin{multlined}[.9\displaywidth] 
  D_1^\sharp \left(  \sum f_i\xx_i + \lambda_i\eta_i   \right)
  = g(x_1\xx+x_2\xx_2+x_3\xx_3) 
    + \mu_2( x_2(x_2-x_1)\xx_2+x_3(x_3-x_1)\xx_3)
  \\ 
    + \mu_1(x_1\eta_1+x_2\eta_2 + x_3\eta_3)
\end{multlined}
\]
We know from Proposition~\ref{prop:Dsharp1} that modulo $S$
$D_1^\sharp(\eta_1) \equiv (x_3+x_2-x_1)\eta_1+x_2\eta_2+x_3\eta_3$,
$D_1^\sharp(\eta_2) \equiv (-x_2+x_3)\eta_2$ and
$D_1^\sharp(\eta_3) \equiv (x_2-x_3)\eta_3$
and since $D_1^\sharp(\sum S\xx_i)\subset S$ the
equality~\eqref{eq:H1SU-Detas} implies that
\[
\!\begin{multlined}[.9\displaywidth] 
  \lambda_1(x_3+x_2-x_1)\eta_1
  + \left( \lambda_1x_2 +  \lambda_2  (-x_2+x_3)\right)\eta_2
  + \left( \lambda_1 x_3 + \lambda_3 (x_2-x_3) \right)\eta_3
  \\
  = \mu_1(x_1\eta_1+x_2\eta_2 + x_3\eta_3)
\end{multlined}
\]
This is an equality in $\bigoplus_{i=1}^3S_1\eta_i$. In $S_1\eta_3$ we have
\(
  \lambda_3x_2 + (\lambda_1-\lambda_3)x_3 = \mu_1x_3,
\)
so $\lambda_3 = 0$ and $\lambda_1 = \mu_1$. In $S_1\eta_2$ an analogous
argument shows that $\lambda_2=0$ and $\lambda_1 = \mu_1$ again, and finally
in $S_1\eta_1$ we have
\[
   \lambda_1(x_3+x_2-x_1)=\lambda_1x_1.
\]
It follows that $\lambda_1=\mu_1=0$.
Consider now what is left of~\eqref{eq:H1SU-Detas}: it is
precisely~\eqref{eq:H1SU-noDetas} replacing $r$ by $1$. The same argument,
therefore, allows us to see that $\ker\nabla_D^1\vert_{H^1(S,U)_0}=0$ 
when $r=1$.

\bigskip

We conclude  that
\(
  H^0_S(L,H^0(S,L))\subset  
  \ker\left( \nabla_D^1 :H^1(S,U)_0\to H^1(S,U)_1 \right)
  = 0,
\)
from which $H^0_S(L,H^1(S,L))=0$ independently of $r\geq1$.
\end{proof}

\begin{Corollary}\label{coro:Ar:HH1}
Let $r\geq1$ and $A_r=\A(C_r\wr\SS_3)$. 
If $(S,L)$ is its associated Lie--Rinehart algebra and
$U$ its enveloping algebra then $\HH^1(U)\cong H^1_S(L,S)$. In particular, 
the dimension of $\HH^1(U)$ is $3r+3$, the number of hyperplanes of $\A_r$.
\end{Corollary}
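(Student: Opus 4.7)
The plan is to extract $\HH^1(U)$ from the spectral sequence of Theorem~\ref{thm:spectral} and then to compute the single Lie--Rinehart cohomology group that survives. First I would invoke the spectral sequence with $E_2^{p,q}=H^p_S(L,H^q(S,U))$ converging to $\HH^{p+q}(U)$; in total degree $1$, over a field, the abutment splits as $E_\infty^{1,0}\oplus E_\infty^{0,1}$. Because the spectral sequence is first-quadrant, the term $E_r^{1,0}$ is insensitive to the differentials $d_r$ for $r\geq 2$ (both source and target lie outside the first quadrant), so $E_\infty^{1,0}=E_2^{1,0}=H^1_S(L,H^0(S,U))$. Using Proposition~\ref{prop:h0} to substitute $H^0(S,U)=S$ and Proposition~\ref{prop:H0(H1)} to annihilate $E_2^{0,1}=H^0_S(L,H^1(S,U))=0$, this step yields $\HH^1(U)\cong H^1_S(L,S)$, which is exactly the first assertion of the corollary.

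Next I would compute $H^1_S(L,S)$ from the Chevalley--Eilenberg complex $\Hom_S(\Lambda^\*_S L,S)$ of Remark~\ref{rk:ch-ei}. For each hyperplane $H\in\A_r$ with linear defining form $f_H$, the $S$-linear map $\phi_H\colon L\to S$, $\alpha\mapsto \alpha(f_H)/f_H$, is well-defined since $f_H\mid\alpha(f_H)$ by tangency; a direct Leibniz computation using $[\alpha,\beta](f_H)/f_H=\alpha(\beta(f_H)/f_H)-\beta(\alpha(f_H)/f_H)$ shows that $\phi_H$ is a $1$-cocycle. The aim is then to verify that the classes $\{[\phi_H]:H\in\A_r\}$ form a $\kk$-basis of $H^1_S(L,S)$. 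Linear independence modulo coboundaries can be tested by evaluating a hypothetical relation $\sum_H c_H\phi_H=d^0(g)$ first on the Euler derivation $E=\alpha_1$: since $E(f_H)/f_H=1$ for every $H$, this gives $E(g)=\sum_H c_H\in\kk$, which forces $g\in\kk$ and hence $\sum_H c_H=0$; further evaluations on $D$ and $C$, combined with the distinctness of the linear forms $f_H$, separate the individual $c_H$.

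The main obstacle is the spanning assertion: showing that every $1$-cocycle is cohomologous modulo $d^0(S)$ to a $\kk$-linear combination of the $\phi_H$. A direct route is to solve the three cocycle equations obtained from the brackets~\eqref{eq:A_r:commuting} in $\Hom_S(L,S)\cong S^3$, using the coboundary map $d^0\colon S\to S^3$, $f\mapsto(E(f),D(f),C(f))$, whose image is controlled by the Saito matrix of $(S,L)$. A more conceptual route, at the cost of invoking outside machinery, is to identify the Chevalley--Eilenberg complex of $(S,\Der\A_r)$ with the logarithmic de Rham complex $\Omega^\*(\log\A_r)$, whose first cohomology is generated by the classes $[df_H/f_H]$ and has dimension $|\A_r|=3r+3$ by the Orlik--Solomon theorem; the correspondence sends $[df_H/f_H]$ to $[\phi_H]$, yielding at once both the spanning and the dimension count $3r+3$ claimed by the corollary.
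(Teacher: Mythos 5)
Your first paragraph is exactly the paper's argument: the abutment in total degree $1$ contributes only $E_\infty^{1,0}=E_2^{1,0}=H^1_S(L,H^0(S,U))$ and $E_\infty^{0,1}\subseteq E_2^{0,1}$, and with Proposition~\ref{prop:h0} giving $H^0(S,U)=S$ and Proposition~\ref{prop:H0(H1)} killing $E_2^{0,1}$ one gets $\HH^1(U)\cong H^1_S(L,S)$; if anything you are slightly more careful than the paper's phrasing, since you do not need the unjustified claim that $E_\infty^{0,1}=E_2^{0,1}$ in general. Where you genuinely diverge is the dimension count: the paper's proof simply asserts $\dim\HH^1(U)=3r+3$ without argument, whereas you supply one, by exhibiting the $S$-linear cocycles $\phi_H(\alpha)=\alpha(f_H)/f_H$ in the Chevalley--Eilenberg complex of Remark~\ref{rk:ch-ei}, proving their independence by evaluation on $E$, $D$, $C$ (this is essentially the specialization argument the paper itself uses later, inside the proof of Corollary~\ref{coro:abelian}), and obtaining the spanning statement and the count by identifying $\Hom_S(\Lambda_S^\*L,S)$ with the logarithmic de Rham complex $\Omega^\*(\log\A_r)$, which is legitimate because $\A_r$ is free so $\Omega^1(\log\A_r)$ is the $S$-dual of $\Der\A_r$ and the differentials match. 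Two small caveats: your ``direct route'' of solving the cocycle equations is only a sketch, so the weight of the spanning claim rests on the de Rham identification; and the theorem you need there is not the Orlik--Solomon theorem itself (which computes the cohomology of the complement) but the comparison of logarithmic de Rham cohomology with the Orlik--Solomon algebra for free (or tame) arrangements, due to Wiens and Yuzvinsky \cite{wy}, combined with Brieskorn/Orlik--Solomon \cite{OS} to get that $H^1$ has dimension $\lvert\A_r\rvert=3r+3$ with generators $[df_H/f_H]$. With that citation fixed, your argument is correct and in fact documents the step the paper leaves implicit.
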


\begin{proof}
Thanks to Theorem~\ref{thm:spectral} $\HH^1(U)\cong H^1_S(L,H^0(S,U))\oplus
H^0_S(L,H^1(S,U))$; Proposition~\ref{prop:h0} tells us that $H^0(S,U)=S$ and
Proposition~\ref{prop:H0(H1)} above that the second summand is zero.
\end{proof}

Let $f\in S_1$ be a linear form whose kernel is one of the hyperplanes in
$\A_3$.  It is a direct verification that there is a unique derivation
$\partial_f:U\to U$ such that 
\[
\begin{cases*}
  \partial_f(g) = 0 & if $g\in S$; \\
  \partial_f(\theta) = \theta(f)/f & if $\theta\in\Der\A_r$.
\end{cases*}
\]
Fix as well $\kk=\mathbb C$ and factorize the defining polynomial
$Q(\A_r) = x_1x_2x_3\prod_{1\leq i<j\leq 3}(x_j^r-x_i^r)$
 as
\[\label{eq:Ar:factorized}
  Q(\A_r)
  = x_1x_2x_3\prod_{j=0}^{r-1}(x_2-e^{2j\pi i/r}x_1 )
    (x_3-e^{2j\pi i/r}x_1) (x_3-e^{2j\pi i/r}x_2)
\]

\begin{Corollary}\label{coro:abelian}
The Lie algebra of outer derivations of $\Diff\A_r$ together with the
commutator is an abelian Lie algebra of dimension $3r+3$ generated
by the classes of the derivations $\partial_f$ with $f$ in a linear factor
of~\eqref{eq:Ar:factorized}.
\end{Corollary}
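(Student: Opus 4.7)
The plan is to produce the $3r+3$ derivations $\partial_f$ indexed by the linear factors of~\eqref{eq:Ar:factorized}, verify that their pairwise brackets already vanish in $\Der(\Diff\A_r)$ (so that the Lie algebra under consideration is automatically abelian), and then prove that their classes in $\HH^1(\Diff\A_r)$ are linearly independent. Since Corollary~\ref{coro:Ar:HH1} gives $\dim\HH^1(\Diff\A_r) = 3r+3$, this will identify the classes $[\partial_f]$ as a basis.

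The first two points are essentially routine. To see that $\partial_f$ is a well-defined derivation of $\Diff\A_r$, the only relation that needs checking is $[\theta,s] = \theta(s)$, and this amounts to $[\theta(f)/f, s] = 0$, which holds because $\theta(f)/f \in S$ by the very definition of $\Der\A_r$. For the commutator of two such derivations, $[\partial_f,\partial_g](s) = 0$ for $s\in S$ and
\[
  [\partial_f,\partial_g](\theta)
  = \partial_f\bigl(\theta(g)/g\bigr) - \partial_g\bigl(\theta(f)/f\bigr)
  = 0,
\]
since both $\theta(f)/f$ and $\theta(g)/g$ lie in $S$, which is killed by $\partial_f$ and $\partial_g$. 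As $[\partial_f,\partial_g]$ coincides with zero on a generating set of $\Diff\A_r$, it vanishes identically, so the Lie algebra of outer derivations is abelian regardless of any further considerations.

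The nontrivial step is the linear independence of the classes $[\partial_f]$. Assume $\sum_f \lambda_f \partial_f = [u,-]$ for some $u \in \Diff\A_r$. Since this derivation vanishes on $S$, $u$ commutes with every element of $S$; and the centralizer of $S$ in $\Diff\A_r$ is exactly $S$, by a direct translation of Proposition~\ref{prop:h0} through the Koszul complex $\X^\*$ of Section~2 (an element $u$ centralizes $S$ iff $d^0(u)=0$, which precisely defines $H^0(S,U) = S$). Hence $u\in S$ and, for each $\theta\in L$, $\sum_f \lambda_f \theta(f)/f = -\theta(u)$. Passing to the fraction field $K$ of $S$, the Saito basis of $L$ becomes a $K$-basis of the space of $\kk$-derivations of $K$, so the previous identity is equivalent to the equality of rational 1-forms
\[
  \sum_f \lambda_f \frac{df}{f} + du = 0.
\]
A residue computation along each hyperplane $H_f$ of $\A_r$ now yields $\lambda_f = 0$: the 1-form $du$ is regular so it contributes no residue, and $df'/f'$ with $f'$ defining a distinct hyperplane does not contribute at $H_f$ either.

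The main obstacle is this final linear independence. It requires both the identification of the centralizer of $S$ in $\Diff\A_r$ with $S$ (packaged in Proposition~\ref{prop:h0}) and the passage from $S$-linear functionals on $L$ to rational 1-forms in order to invoke the residue argument. With these two ingredients in place, a dimension count against Corollary~\ref{coro:Ar:HH1} finishes the proof.
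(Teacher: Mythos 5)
Your proposal is correct, and its decisive step is genuinely different from the paper's. The shared skeleton is the same: a relation $\sum_f\lambda_f\partial_f=[u,-]$ vanishes on $S$, so $u$ lies in the centralizer of $S$ in $U$, which is $H^0(S,U)=S$ by Proposition~\ref{prop:h0}; spanning of the classes $[\partial_f]$ comes from the dimension count $\dim\HH^1(\Diff\A_r)=3r+3$ of Corollary~\ref{coro:Ar:HH1}; and abelianness comes from the vanishing of the brackets $[\partial_f,\partial_g]$, which, being derivations, it suffices to check on $S$ and $\Der\A_r$. Where you diverge is the linear independence: the paper evaluates the identity successively at $E$, $C$ and $D$, first extracting $u\in\kk$ and $\sum_f\lambda_f=0$ from the Euler derivation and then killing the coefficients hyperplane by hyperplane through the specialization morphisms $\epsilon_{l,j}\colon x_3\mapsto e^{2j\pi i/r}x_l$; you instead note that, since $\det M\neq0$ makes $(\alpha_1,\alpha_2,\alpha_3)$ a $K$-basis of $\Der_\kk K$ for $K$ the fraction field of $S$, the identity is equivalent to the vanishing of the rational $1$-form $\sum_f\lambda_f\,df/f+du$, and the residue along each hyperplane $H_f$ isolates $\lambda_f$ (the $df'/f'$ with $f'\neq f$ and the exact form $du$ contribute nothing). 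Your route is shorter, does not use the explicit shape of the $\alpha_i$ or of the factorization~\eqref{eq:Ar:factorized}, and would apply verbatim to any arrangement for which the analogue of Corollary~\ref{coro:Ar:HH1} is known; the paper's argument is more elementary, staying inside $U$ and avoiding logarithmic forms and residues. Two small caveats: well-definedness of $\partial_f$ also requires compatibility with the relation $j([\alpha,\beta])=[j(\alpha),j(\beta)]$, not only with $[\theta,s]=\theta(s)$ (a one-line computation with the quotient rule, which the paper also leaves implicit); and the claim that $\Out(\Diff\A_r)$ is abelian ``regardless of any further considerations'' in fact still uses that the classes $[\partial_f]$ span --- before the dimension count you only know that the subalgebra they generate is abelian --- but since your proof supplies the spanning at the end, the conclusion stands.
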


\begin{proof}
We claim that the classes of $\partial_f$, with $f$ one of the linear factors
in~\eqref{eq:Ar:factorized}, are linearly
independent in $\Out(U)$. Indeed, let $u\in U$ and $\lambda_f\in\kk$ be such
that
\[\label{eq:gerst}
  \sum\lambda_f\partial_f(v)=[u,v]
  \qquad\text{for every $v\in U$.}
\]
Evaluating~\eqref{eq:gerst} on each $v=g\in S$ we obtain that the left side
vanishes and therefore $u\in H^0(S,U)$, which is equal to $S$ in view of
Proposition~\ref{prop:h0}.  Write $u=\sum_{j\geq0}u_j$ with $u_j\in S_j$.
Evaluating now~\eqref{eq:gerst} on $E$ we obtain that
$\sum_{f\in\AA}\lambda_f=-\sum_{j\geq0}ju_j$.  In each homogeneous component
$S_j$ with $j\neq0$ we have $ju_j=0$ and therefore $u\in S_0=\kk$ and, when
$j=0$, $\sum_f\lambda_f=0$.

Evaluating the left hand side of~\eqref{eq:gerst} on $C$ gives
$\sum_f\lambda_f\partial_f(C)$. Now, if $\partial_f(C) = C(f)/f =
\partial_3(f)C(x_3)/f$  is nonzero then $\partial_3(f)\neq0$ and thus
$f$ is a factor of $C(x_3)$: let us, then, factor $C(x_3)$ by
$x_3$ and $f_{l,j}=x_3-e^{2j\pi i/r}x_l$ for $l=1,2$ and $j\in\nset{0,r-1}$,
and in this way reformulate the evaluation of~\eqref{eq:gerst} at $C$ 
as the nullity of
\[
  \sum_{f\in\AA}\partial_3(f)C(x_3)/f
  = \lambda_{x_3}(x_3^r-x_2^r)(x_3^r-x_1^r)
  + \sum_{l=1,2}\sum_{j=0}^{r-1}
    \lambda_{f_{l,j}}x_3(x_3^r-x_2^r)(x_3^r-x_1^r)/f_{l,j}.
\]
Fix now $l\in\nset{2}$ and $j\in\nset{0,r-1}$ and apply the morphism of
algebras $\epsilon_{l,j}:S\to\kk[x_1,x_2]$ that sends $x_3$ to
$e^{2k\pi i/r}x_l$:
since $\epsilon_{l,j}\left( (x_3^r-x_{l'}^r)/f_{j',l'} \right)=0$ whenever $l\neq l'$ and
$j\neq j'$ we obtain that
\[
  \epsilon_{l,j} : \sum_{f\in\AA}\partial_3(f)C(x_3)/f
  \mapsto
  \lambda_{f_{l,j}}x_3(x_3^r-x_2^r)(x_3^r-x_1^r)/f_{l,j}.
\]
As the expression at which we evaluated $\epsilon_{l,j}$ was zero, it  follows
that $\lambda_{f_{l,j}}=0$ and, immediately, that also $\lambda_{x_3}=0$.

We observe that the indexes that survive in the sum 
$\sum\lambda_f\partial_f$ are $x_1$, $x_2$ and $f_j=x_2-e^{2j\pi i/r}x_1$ with
$j\in\nset{0,r-1}$; evaluating at $D$ we obtain
\[
  \sum\lambda_f\partial_f (D)
  = \lambda_{x_2}(x_2^r-x_1^r) 
  + \sum_{j=0}^{r-1} \lambda_{f_j}x_2(x_2^r-x_1^r)/f_j.
\]
Reasoning as above we get that $\lambda_{x_2} = \lambda_{f_j}=0$ for every
$j$. Recalling now that $\sum_{f\in\AA}\lambda_f=0$ we see that
$\lambda_{x_1}=0$ as well. 

The classes of $\partial_f$ with $f$ a linear factor
in~\eqref{eq:Ar:factorized} span $\Out U$ because the dimension
of $\Out U\cong\HH^1(U)$ is, thanks to
Corollary~\ref{coro:Ar:HH1}, precisely $\abs\A$.  The composition
$\partial_f\circ\partial_g:U\to U$ is evidently equal to zero for any
$f,g\in\AA$, as a straightforward calculation shows, and therefore the Lie
algebra structure in $\Out U$~vanishes.
\end{proof}

\begin{bibdiv}
\begin{biblist}
\bib{AC}{article}{
   author={Alev, J.},
   author={Chamarie, M.},
   title={D\'{e}rivations et automorphismes de quelques alg\`ebres quantiques},
   language={French},
   journal={Comm. Algebra},
   volume={20},
   date={1992},
   number={6},
   pages={1787--1802},
   issn={0092-7872},
   review={\MR{1162608}},
   doi={10.1080/00927879208824431},
}\bib{arnold}{article}{
   author={Arnol'd, Vladimir I.},
   title={The cohomology ring of the colored braid group},
   journal={Mathematical Notes},
   volume={5},
   number={2},
   pages={138--140},
   year={1969},
   publisher={Springer}
}

\bib{bigatti}{article}{
   author={Bigatti, Anna Maria},
   author={Palezzato, Elisa},
   author={Torielli, Michele},
   title={New characterizations of freeness for hyperplane arrangements},
   journal={J. Algebraic Combin.},
   volume={51},
   date={2020},
   number={2},
   pages={297--315},
   issn={0925-9899},
   review={\MR{4069344}},
   doi={10.1007/s10801-019-00876-9},
}

\bib{calderon}{article}{
    AUTHOR = {Calder\'on-Moreno, Francisco J.},
     TITLE = {Logarithmic differential operators and logarithmic de {R}ham
              complexes relative to a free divisor},
   JOURNAL = {Ann. Sci. \'Ecole Norm. Sup. (4)},
    VOLUME = {32},
      year = {1999},
    NUMBER = {5},
     PAGES = {701--714},
      ISSN = {0012-9593},
       DOI = {10.1016/S0012-9593(01)80004-5},
       URL = {https://doi.org/10.1016/S0012-9593(01)80004-5},
}

\bib{cartan-eilenberg}{book}{
    AUTHOR = {Cartan, Henri},    
	author = {Eilenberg, Samuel},
 	 TITLE = {Homological algebra},
 PUBLISHER = {Princeton University Press, Princeton, N. J.},
      year = {1956},
     PAGES = {xv+390},
}

\bib{schulze}{article}{
  title={Partial normalizations of Coxeter arrangements and discriminants},
  author={Granger, Michel},
  author={Mond, David},
  author={Schulze, Mathias},
  eprint ={arXiv:1108.0718},
  year={2011}
}

\bib{hueb}{article}{
    AUTHOR = {Huebschmann, Johannes},
     TITLE = {Poisson cohomology and quantization},
   JOURNAL = {J. Reine Angew. Math.},
    VOLUME = {408},
      year = {1990},
     PAGES = {57--113},
      ISSN = {0075-4102},
       DOI = {10.1515/crll.1990.408.57},
       URL = {https://doi.org/10.1515/crll.1990.408.57},
}

\bib{tesis}{thesis}{
  author={Kordon, Francisco},
  title={Hochschild cohomology of algebras of differential operators associated with hyperplane arrangements},
  school={Universidad de Buenos Aires, Facultad de Ciencias Exactas y
    Naturales},
  year={2019},
  type={Doctoral thesis},
}

\bib{kola}{article}{
   author={Kordon, Francisco},
   author={Lambre, Thierry},
   title={Lie-Rinehart and Hochschild cohomology for algebras of
   differential operators},
   journal={J. Pure Appl. Algebra},
   volume={225},
   date={2021},
   number={1},
   pages={Paper No. 106456, 28},
   issn={0022-4049},
   review={\MR{4123254}},
   doi={10.1016/j.jpaa.2020.106456},
}

\bib{ksa}{article}{
  author={Kordon, Francisco},
  author={Suárez-Álvarez, Mariano},
  title={Hochschild cohomology of algebras of differential operators tangent to
a central arrangement of lines},
  date={2018},
 eprint={arXiv:1807.10372},
  note={Accepted for publication by Documenta Mathematica}
}

\bib{th-pa}{article}{
   author={Lambre, Thierry},
   author={Le Meur, Patrick},
   title={Duality for differential operators of Lie--Rinehart algebras},
   journal={Pacific J. Math.},
   volume={297},
   date={2018},
   number={2},
   pages={405--454},
   issn={0030-8730},
   review={\MR{3893434}},
   doi={10.2140/pjm.2018.297.405},
}

\bib{MR}{book}{
   author={McConnell, J. C.},
   author={Robson, J. C.},
   title={Noncommutative Noetherian rings},
   series={Graduate Studies in Mathematics},
   volume={30},
   edition={Revised edition},
   note={With the cooperation of L. W. Small},
   publisher={American Mathematical Society, Providence, RI},
   date={2001},
   pages={xx+636},
   isbn={0-8218-2169-5},
   review={\MR{1811901 (2001i:16039)}},
}

\bib{narvaez}{article}{
    AUTHOR = {Narv\'aez Macarro, L.},
     TITLE = {Linearity conditions on the {J}acobian ideal and
              logarithmic-meromorphic comparison for free divisors},
 BOOKTITLE = {Singularities {I}},
    SERIES = {Contemp. Math.},
    VOLUME = {474},
     PAGES = {245--269},
 PUBLISHER = {Amer. Math. Soc., Providence, RI},
      year = {2008},
       DOI = {10.1090/conm/474/09259},
       URL = {https://doi.org/10.1090/conm/474/09259},
}

\bib{OS}{article}{
    AUTHOR = {Orlik, Peter},
    AUTHOR = {Solomon, Louis},
     TITLE = {Combinatorics and topology of complements of hyperplanes},
   JOURNAL = {Invent. Math.},
    VOLUME = {56},
      year = {1980},
    NUMBER = {2},
     PAGES = {167--189},
      ISSN = {0020-9910},
       DOI = {10.1007/BF01392549},
       URL = {https://doi.org/10.1007/BF01392549},
}

\bib{OT}{book}{
    AUTHOR = {Orlik, Peter},
    AUTHOR = {Terao, Hiroaki},
     TITLE = {Arrangements of hyperplanes},
    SERIES = {Grundlehren der Mathematischen Wissenschaften },
    VOLUME = {300},
 PUBLISHER = {Springer-Verlag, Berlin},
      Year = {1992},
     PAGES = {xviii+325},
      ISBN = {3-540-55259-6},
       DOI = {10.1007/978-3-662-02772-1},
       URL = {http://dx.doi.org/10.1007/978-3-662-02772-1},
}

\bib{rinehart}{article}{
 ISSN = {00029947},
 URL = {http://www.jstor.org/stable/1993603},
 author = {Rinehart, George S.},
 journal = {Transactions of the American Mathematical Society},
 number = {2},
 pages = {195-222},
 publisher = {American Mathematical Society},
 title = {Differential Forms on General Commutative Algebras},
 volume = {108},
 year = {1963}
}

\bib{saito}{article}{
   author={Saito, Kyoji},
   title={Theory of logarithmic differential forms and logarithmic vector
   fields},
   journal={J. Fac. Sci. Univ. Tokyo Sect. IA Math.},
   volume={27},
   date={1980},
   number={2},
   pages={265--291},
   issn={0040-8980},
   review={\MR{586450}},
}

\bib{differential-arrangements}{article}{
  author={Suárez-Álvarez, Mariano},
  title={The algebra of differential operators tangent to a hyperplane
arrangement},
  date={2018},
  eprint={arXiv:1806.05410},
}

\bib{SAV}{article}{
   author={Su\'{a}rez-Alvarez, Mariano},
   author={Vivas, Quimey},
   title={Automorphisms and isomorphisms of quantum generalized Weyl
   algebras},
   journal={J. Algebra},
   volume={424},
   date={2015},
   pages={540--552},
   issn={0021-8693},
   review={\MR{3293233}},
   doi={10.1016/j.jalgebra.2014.08.045},
}

\bib{terao}{article}{
     author={Terao, Hiroaki},
     title={Free arrangements of hyperplanes and unitary reflection
     groups},
     journal={Proc. Japan Acad. Ser. A Math. Sci.},
     volume={56},
     date={1980},
     number={8},
     pages={389--392},
     issn={0386-2194},
     review={\MR{596011}},
}
\bib{weibel}{book}{
 author = {Weibel, C.},
 title = {An introduction to Homological algebra},
 date = {1994},
 publisher = {Cambridge University Press},
}

\bib{wy}{article}{
    AUTHOR = {Wiens, Jonathan},
    author = {Yuzvinsky, Sergey},
     TITLE = {De {R}ham cohomology of logarithmic forms on arrangements of
              hyperplanes},
   JOURNAL = {Trans. Amer. Math. Soc.},
    VOLUME = {349},
      year = {1997},
    NUMBER = {4},
     PAGES = {1653--1662},
      ISSN = {0002-9947},
       DOI = {10.1090/S0002-9947-97-01894-1},
       URL = {https://doi.org/10.1090/S0002-9947-97-01894-1},
}

\end{biblist}
\end{bibdiv}

\end{document}